\documentclass[11pt]{amsart}
\usepackage[top=3.5cm, bottom=3.5cm, left=2.5cm, right=2.5cm]{geometry}
\usepackage[numbers]{natbib}
\usepackage{hyperref}

\title{Uniformly Almost Flatness and Solubility in Finitely Generated Groups
}

\author{David Guo}
\usepackage{graphicx}
\usepackage{framed}
\usepackage{amsmath}
\usepackage{amssymb}
\usepackage{amsfonts}
\usepackage{mathrsfs}
\usepackage{array}
\usepackage{enumerate}
\usepackage{comment}
\usepackage{amsthm}  
\usepackage{mathtools}
\usepackage{physics}
\usepackage[dvipsnames]{xcolor}
\usepackage{braket}
\usepackage{MnSymbol}

\usepackage{dutchcal}

 \usepackage{tikz}
 \usetikzlibrary{matrix}

\usepackage{soul}

\DeclareMathOperator{\lcm}{lcm}
\DeclareMathOperator{\diam}{diam}

\DeclareMathOperator{\GL}{GL}

\DeclareMathOperator{\ord}{ord}

\usepackage[capitalise,nameinlink ,noabbrev]{cleveref}

\providecommand{\C}{\mathbb{C}}

\renewcommand{\ge}{\mathcal{G}}
\providecommand{\F}{\mathbb{F}}
\providecommand{\Fp}{\mathbb{F}_p}

\providecommand{\N}{\mathbb{N}}
\providecommand{\Z}{\mathbb{Z}}
\providecommand{\Q}{\mathbb{Q}}

\renewcommand{\a}{\alpha}

\providecommand{\iso}{ \cong}

\renewcommand{\l}{ \lambda}

\providecommand{\d}{ \delta}

\renewcommand{\Pr}{ \mathcal{P}}

\providecommand{\qx}{\Q[x]}

\renewcommand{\i}{^{-1}}

\providecommand{\d}[2]{\diam(G/H)} 
\providecommand{\F}[1]{\Fit{#1}}

\providecommand{\Pi}{P_{i}}

\providecommand{\qx}{\Q[x]}

\theoremstyle{plain} 
\newtheorem{thm}{Theorem}[section] 

\definecolor{softgreen}{RGB}{0,100,0} 

\newtheorem*{thm*}{Theorem}
\newtheorem{lem}[thm]{Lemma} 
\newtheorem{lemma}[thm]{Lemma} 
 
\newtheorem{prop}[thm]{Proposition} 
\newtheorem{cor}[thm]{Corollary}

\usepackage{MnSymbol,wasysym}

\newcommand*{\myproofname}{Proof of the claim}

\theoremstyle{definition} 
\newtheorem{defn}[thm]{Definition}

\theoremstyle{remark} 
\newtheorem*{rem}{Remark} 

\makeatletter\@addtoreset{case}{thm}\makeatother

\DeclarePairedDelimiterX{\inp}[2]{\langle}{\rangle}{#1, #2}

\usepackage[numbers]{natbib}
\usepackage{cite}
\usepackage{url}

\newcommand{\david}[1]{ {\color{blue} David: } {\color{blue!75!black} #1} }

 \renewcommand{\david}[1]{}
\newtheorem{conjecture}[thm]{Conjecture}

\usepackage{soul}
\usepackage{nicematrix}

 \usepackage{tikz}
 \usetikzlibrary{matrix}

\providecommand{\C}{\mathbb{C}}

\renewcommand{\ge}{\mathcal{G}}
\providecommand{\F}{\mathbb{F}}
\providecommand{\Fp}{\mathbb{F}_p}

\providecommand{\N}{\mathbb{N}}
\providecommand{\Z}{\mathbb{Z}}
\providecommand{\Q}{\mathbb{Q}}

\renewcommand{\a}{\alpha}

\providecommand{\iso}{ \cong}

\renewcommand{\l}{ \lambda}

\providecommand{\d}{ \delta}

\renewcommand{\Pr}{ \mathcal{P}}

\renewcommand{\i}{^{-1}}
 
\providecommand{\d}[2]{\diam(G/H)} 
\providecommand{\F}[1]{\Fit{#1}}

\providecommand{\Pi}{P_{i}}

\providecommand{\d}{ \delta}

\renewcommand{\Pr}{ \mathcal{P}}

\providecommand{\qx}{\Q[x]}

\renewcommand{\i}{^{-1}}

\providecommand{\d}[2]{\diam(G/H)} 
\providecommand{\F}[1]{\Fit{#1}}

\providecommand{\ch}{\operatorname{char}}

\providecommand{\Pi}{P_{i}}

\usepackage{dutchcal}
 
\renewcommand{\c}[1]{ \operatorname{ cyc}  {(#1) }}

\renewcommand{\d}{ \delta}

\renewcommand{\ge}{\geqslant}
\renewcommand{\le}{\leqslant}
\renewcommand{\geq}{\geqslant}
\renewcommand{\leq}{\leqslant}
\newcommand{\eps}{\varepsilon}

 \renewcommand{\david}[1]{}
 \renewcommand{\c}[1]{}

  \newcommand{\soluble}[1]{}

\renewcommand\emph[1]{\textit{\textcolor{red}{#1}}}

  \newcommand{\paper}[1]{ {\color{blue}  } {} }

\renewcommand{\comment}[1]{ }

\makeatletter
\def\cref@thmoptarg[#1]#2#3#4{%
    \ifhmode\unskip\unskip\par\fi%
    \normalfont%
    \trivlist%
    \let\thmheadnl\relax%
    \let\thm@swap\@gobble%
    \thm@notefont{\fontseries\mddefault\upshape}%
    \thm@headpunct{.}%
    \thm@headsep 5\p@ plus\p@ minus\p@\relax%
    \thm@space@setup%
    #2%
    \@topsep \thm@preskip                
    \@topsepadd \thm@postskip            
    \def\@tempa{#3}\ifx\@empty\@tempa%
      \def\@tempa{\@oparg{\@begintheorem{\MakeUppercase #4}{}}[]}%
    \else%
      \refstepcounter[#1]{#3}%
      \@namedef{cref@#3@alias}{#1}%
      \def\@tempa{\@oparg{\@begintheorem{\MakeUppercase #4}{\csname the#3\endcsname}}[]}%
    \fi%
    \@tempa}%
\makeatother

\begin{document}

\begin{abstract}
Tointon and the author conjectured that, for a finitely generated residually finite group, virtual nilpotence is equivalent to the condition that the diameters of its finite coset spaces admit a uniform polynomial lower bound in terms of their sizes. We first verify this conjecture for the class of finitely generated soluble groups. We then prove that this polynomial lower bound condition implies that the group has a finite-index subgroup whose finite quotients are all soluble. An immediate consequence of these two results is the verification of the conjecture for finitely generated linear groups. In addition, we establish the same conclusion for certain finitely generated abelian-by-cyclic groups under the weaker assumption that their finite quotients satisfy this polynomial lower bound condition.
\end{abstract}

\maketitle  

\tableofcontents

\section{Introduction} \label{into}
The relationship between a group's algebraic structure and its geometry is a central theme in geometric group theory. A foundational result is Gromov's Theorem, which states that a finitely generated group has polynomial growth if and only if it is virtually nilpotent. Prior to this, Wolf showed that a polycyclic group of subexponential growth is virtually nilpotent, while Milnor proved that a finitely generated soluble group of subexponential growth must be polycyclic. Together, these results form the Milnor--Wolf Theorem: a finitely generated soluble group either has polynomial growth (and is thus virtually nilpotent) or has exponential growth. In recent years, several shorter proofs of Gromov's theorem have appeared (see, for instance, \citep{kleiner} and \citep{ozawa}). These proofs make essential use of Tits' alternative \citep{tits} for linear groups, which yields Gromov's theorem for linear groups as a consequence.

In \citep{Guo_Tointon}, Tointon and the author introduce two notions of uniform flatness for residually finite groups—two properties conjectured to be equivalent to virtual nilpotency. Inspired by the development of Gromov’s theorem, this paper addresses the conjecture in the soluble and linear case. We begin by recalling some definitions. A sequence $(N_k)_{k>0}$ of finite-index subgroups of a group $G$ is called a \emph{filtration} if it is strictly decreasing, that is, $N_{k+1} < N_k$ for all $k$. If, in addition, each $N_k$ is normal in $G$, we call $(N_k)_{k>0}$ a \emph{normal filtration}. Throughout this paper, we denote by $G$ an infinite group generated by a finite subset $S$, and we assume that $G$ admits a filtration. In particular, a group $G$ is residually finite if it admits a filtration with trivial intersection. It is straightforward to see that every infinite finitely generated soluble group admits a filtration.
We say that a group $G$ has \emph{uniformly $\alpha$-almost flat quotients} ($u.q.(\alpha)$) if there exists $\varepsilon > 0$ such that for every finite-index normal subgroup $N \trianglelefteq G$, we have:
\[
\operatorname{diam}_S(G/N) \ge \varepsilon [G:N]^\alpha.
\]
Similarly, $G$ has \emph{uniformly $\alpha$-almost flat coset spaces} ($u.c.(\alpha)$) if there exists $\varepsilon > 0$ such that for every finite-index subgroup $H \le G$, we have:
\begin{equation}\label{diam_lower_bound}
\operatorname{diam}_S(G/H) \ge \varepsilon [G:H]^\alpha.
\end{equation}

We say that $G$ has \emph{uniformly almost flat quotients} (u.q.) if there exists some $\alpha \in (0, 1]$ such that $G$ satisfies $u.q.(\alpha)$, and that $G$ has \emph{uniformly almost flat coset spaces} (u.c.) if there exists some $\alpha \in (0, 1]$ such that $G$ satisfies $u.c.(\alpha)$. Here is a partial statement of the conjecture on coset spaces from \citep{Guo_Tointon}.

\begin{conjecture}[{\citealp[Conjecture~1.7]{Guo_Tointon}}]\label{u_c_conj}
A finitely generated residually finite group has the u.c.\ property if and only if it is virtually nilpotent.
\end{conjecture}
The equivalence between the u.c.\ property and virtual nilpotency for polycyclic groups is proved in \citep{Guo_Tointon}.

\begin{thm}[{\citep[Theorem~1.5 and Lemma~2.2]{Guo_Tointon}}] \label{thm:poly} \label{polycyclic_by_finite_group_uda_iff_vn}
Suppose $G$ is an infinite polycyclic group. Then $G$ has the u.c.\ property if and only if $G$ is virtually nilpotent.
\end{thm}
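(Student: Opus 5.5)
Both directions need separate arguments; the converse is the real content.

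\textbf{Virtually nilpotent implies u.c.} Let $G$ be virtually nilpotent. I will first reduce to a torsion-free nilpotent subgroup $N$ of finite index. Because $G$ is finitely generated, $N$ is also finitely generated and, by Schreier, carries a generating set $S'$ whose word metric is bi-Lipschitz to that of $S$. For a finite-index subgroup $H \le G$, the index $[N:N\cap H]$ is comparable to $[G:H]$ up to the constant $[G:N]$. Likewise, $\diam_S(G/H)$ is bounded below by a constant multiple of $\diam_{S'}(N/(N\cap H))$, after accounting for the finitely many cosets of $N$. It therefore suffices to prove the bound for $N$.

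For $N$, the growth is polynomial: $|B_{S'}(r)|\le Cr^d$. The orbit of the identity coset under a ball of radius $D=\diam(N/K)$ covers $N/K$. Hence
\[
[N:K]\le |B_{S'}(D)|\le C D^d,
\]
so $D\ge C^{-1/d}[N:K]^{1/d}$. This gives $u.c.(1/d)$, and this direction needs only polynomial growth.

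\textbf{u.c. implies virtually nilpotent.} Let $G$ be polycyclic with u.c. By Wolf's theorem it suffices to show that $G$ does not have exponential growth in a way witnessed by finite quotients. The strategy is to pass to a finite-index subgroup with the structure $\Z^n\rtimes\cdots$ and find an exponential-type distortion. If $G$ is not virtually nilpotent, then after passing to a finite-index subgroup there is a section $A\rtimes\langle t\rangle$, with $A\cong\Z^n$ free abelian and $t$ acting by a matrix $M$ having an eigenvalue of modulus $\ne1$. More precisely, I expect to find a normal free abelian $A$ and an element $t$ whose action on $A$ is not virtually unipotent, using that polycyclic groups are virtually nilpotent-by-abelian with Fitting subgroup considerations.

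The u.c. property should pass to finite-index subgroups and to quotients. For quotients this is clear, since coset spaces of a quotient are coset spaces of $G$. For finite-index subgroups I will use the bi-Lipschitz comparison above. I then need a family of finite-index subgroups $H$ of $A\rtimes\langle t\rangle$, possibly lifted to $G$, whose coset spaces are small. I will take $H_k=p^kA\rtimes\langle t\rangle$, or a similar non-normal choice. Its coset space $A/p^kA$ has size $p^{kn}$. Because $t$ has an expanding eigenvalue, words of length $O(k)$ of the form $\prod t^{i}a_it^{-i}$ with small $a_i$ reach everything in $A/p^kA$. This would give diameter $O(k)=O(\log[G:H_k])$, contradicting any polynomial lower bound.

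The main obstacle is this last step: showing that conjugates $t^iat^{-i}$ with bounded coefficients generate $A/p^kA$ in $O(k)$ steps. When $M$ is not expanding on all of $A$, or when the eigenvalues are not integral in a convenient sense, this is delicate. I would first restrict to the $M$-invariant rational subspace where the characteristic polynomial has a root of absolute value greater than $1$. I would then use a base-$M$ digit expansion, in the spirit of Milnor's lemma, to show exponential growth of the orbit sums modulo $p^k$. Lifting the subgroups $H_k$ from the section back to $G$ is the other point that needs care.
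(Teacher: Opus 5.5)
Your first direction is correct and is essentially \cref{lem:poly.growth}. The detour through a torsion-free nilpotent $N$ is unnecessary: $G$ itself has polynomial growth, and $[G:H]\le |B_S(\operatorname{diam}_S(G/H))|$ directly.

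The converse has two genuine gaps. You flag both but close neither.

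\textbf{The reduction.} The u.c.\ property passes to groups in $\mathcal{C}(G)$ (\cref{uniform_D_alpha_passes_finte_index_subgroup}), but there is no reason for it to pass to arbitrary sections. A section $A\rtimes\langle t\rangle$ cut out of the Fitting subgroup typically has infinite index, for instance whenever the Fitting quotient has torsion-free rank at least $2$. Moreover, the extension of $A$ by the rest of the group need not split. So $p^kA\rtimes\langle t\rangle$ is not a finite-index subgroup of anything to which u.c.\ is known to pass, and there is no evident finite-index subgroup of $G$ whose coset space is $A/p^kA$. What you need is a \textit{split} group $A\rtimes\Gamma\in\mathcal{C}(G)$ in which some $t\in\Gamma$ acts with an eigenvalue that is not a root of unity; then $p^kA\rtimes\Gamma$ is the required finite-index subgroup. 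Producing this is a real step. Guo--Tointon reduce to the abelian-by-cyclic case (their Lemma~8.21), and for soluble groups this paper needs Robinson--Wilson (\cref{reduction_non_polycyclic_structure}). In the polycyclic case one workable route is as follows. Pass to $G_1/[F,F]$ with $F=\operatorname{Fit}(G_1)$, and quotient out the part of $A\otimes\mathbb{Q}$ on which the relevant characteristic polynomial is cyclotomic. Then an element of the abelian top group acting without eigenvalue $1$ forces $H^2(\mathbb{Z}^m,A\otimes\mathbb{Q})=0$, so the extension virtually splits.

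\textbf{The diameter bound.} The claim $\operatorname{diam}=O(k)$ for $A/p^kA$ is the whole content, and a Milnor-type digit expansion does not deliver it. Milnor's argument yields exponentially many \textit{distinct} sums $\sum_{|i|\le L}M^id_i$, but that does not show they exhaust $A/p^kA$ within $O(k)$ steps. Some restriction is certainly needed. If $t$ acts on $A=\mathbb{Z}^3$ by $M\oplus(1)$ with $M\in\mathrm{GL}_2(\mathbb{Z})$ hyperbolic, the third coordinate changes only under translation by $e_3$. The coset space therefore has diameter at least $p^k/2=|A/p^kA|^{1/3}/2$. Discarding cyclotomic factors is still not enough, since the matrix can have roots on the unit circle that are not roots of unity (e.g.\ the Salem polynomial $x^4-x^3-x^2-x+1$). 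In that case the sums $\sum_{|i|\le L}M^id_i$ lie within $O(L)$ of the sum of the eigenspaces of modulus $\neq 1$. Showing that such a thin set meets every residue class modulo $p^k$ after $O(k)$ steps needs a quantitative equidistribution input that you have not supplied. Even the purely hyperbolic case needs a proof of exponential distortion.

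The route in Guo--Tointon, which \cref{char_K_Z_m} adapts, never constructs short words. It works with primes $p$ at which the characteristic polynomial splits. For each such $p$ it takes an invariant index-$p$ subgroup $A_p$ with quotient $\mathbb{Z}/p\rtimes\mathbb{Z}/r_p$, where $r_p\to\infty$ because some eigenvalue is not a root of unity. It then uses \cref{matt_lemma} with \cref{normal_subgroup_contain_Zp} to place $\mathbb{Z}/p$ inside a ball of radius $p^{\varepsilon}$. This gives index-$p$ coset spaces of diameter at most $p^{\varepsilon}$, and it handles eigenvalues on the unit circle uniformly.
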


The reader will notice that we generalise the notion of uniform almost flatness from \citep{Guo_Tointon} to groups that admit a filtration. This is motivated by the following facts:

\begin{enumerate}[(i)]
\item A finitely generated soluble group has polynomial growth if and only if all its residually finite quotients have polynomial growth; we will examine this further in \cref{subsec_core}.
\item Any finitely generated group with the u.q.\ property has a finite-index subgroup all of whose finite quotients are soluble, which we will prove in \cref{last_section}.
\end{enumerate}

Here is our first main theorem, which concerns the soluble case of \cref{u_c_conj}.

\begin{thm} \label{main_thm}
A finitely generated soluble group has the u.c.\ property if and only if it is virtually nilpotent.
\end{thm}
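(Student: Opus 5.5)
The easy direction comes first. If $G$ is virtually nilpotent, then it is polycyclic-by-finite, so it has a finite-index polycyclic nilpotent subgroup $G_0$. By \cref{thm:poly}, $G_0$ has the u.c.\ property. I would then transfer u.c.\ from $G_0$ to $G$ with a standard comparison lemma. Diameters of coset spaces change by at most a bounded multiplicative and additive amount when passing between $G$ and a finite-index subgroup with Schreier generators, since $[G:G_0]$ is fixed. Indices change by at most the factor $[G:G_0]$. With this lemma, u.c.\ is invariant under commensurability.

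For the hard direction, I would rely on two formal properties of u.c. First, it passes to infinite quotients $Q=G/K$ that admit a filtration. A finite-index subgroup $H\ge K$ has the same index and the same coset graph in $G$ and in $Q$ with the image generating set. Second, by the comparison lemma above, it passes to finite-index subgroups. So suppose $G$ is soluble, has u.c., and is not virtually nilpotent; I want a contradiction. By Zorn's lemma applied to normal subgroups $K$ with $G/K$ not virtually nilpotent (an increasing union works because being virtually nilpotent is a finitely-presented-type condition for soluble groups; I would cite Groves/Kropholler here), I pass to a just-non-virtually-nilpotent quotient. Groves' classification then says this quotient is either polycyclic, which \cref{thm:poly} excludes, or virtually an extension $A\rtimes P$. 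In the second case $A$ is abelian, $P$ is polycyclic (after passing to finite index, free abelian), and $A$ is an infinite, non-finitely-generated $\mathbb{Z}P$-module. Such a group is residually finite by P.\ Hall, so it admits a filtration and inherits u.c.

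The core step is a counting argument in $Q=A\rtimes P$ with $P\cong\mathbb{Z}^d$. The aim is to exhibit finite-index subgroups $H_n$ with $[Q:H_n]$ exponentially large in $n$ but $\diam(Q/H_n)$ polynomial in $n$. This contradicts $\diam\ge\varepsilon[Q:H_n]^\alpha$.

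I would first handle the case where $A$ has an infinite quotient module $\bar A$ of exponent $p$. Choose $t\in P$ with infinite orbit on some $a\in\bar A$; such $t$ exists because $\bar A$ is not finitely generated as a group. Then take $H_n=B_n\rtimes P_n$, where $P_n=\langle t^n,\text{other generators}\rangle$ (or $nP$) and $B_n\le\bar A$ is a $P_n$-submodule such that $\bar A/B_n$ is spanned by the translates $t^ia$, $0\le i<n$, and has dimension at least $cn$. Every coset is represented by $\prod_i t^i a^{e_i}t^{-i}$ with $0\le e_i<p$, times a bounded element of $P/P_n$. The word length of such a representative is $O(pn)$ by the usual lamplighter sweeping trick. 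This gives diameter $O(n)$ against index at least $p^{cn}$, which contradicts u.c.

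In the torsion-free case I would reduce to the previous one. Since $A$ is not finitely generated as an abelian group, some $\mathbb{Z}P$-module quotient of $A$ has positive Krull dimension over $\mathbb{F}_p$, or else there is an infinite orbit; in that case I would replace $\mathbb{F}_p$ by a finite quotient $\mathbb{Z}/m$ of a suitable prime ideal and run the same counting.

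I expect this reduction to be the main obstacle. It requires showing that some $A/pA$ (or a suitable finite-ring reduction of $A$) stays infinite with a $t$-orbit whose span grows linearly in $n$, using commutative algebra of $\mathbb{Z}P$-modules (Hall, Bieri--Strebel). If that fails, the fallback is to work with $A\otimes\mathbb{Q}$ and reduce modulo large primes, keeping the exponential-index versus polynomial-diameter comparison.
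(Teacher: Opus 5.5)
\textbf{There is a genuine gap: your plan never handles the case where the abelian normal subgroup $A$ has finite torsion-free rank, and this is where most of the paper's work lies.}

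Your reduction to a bounded-exponent quotient is available exactly when $A$ has infinite rank. That is Hall's lemma, \cref{lem:hall_result} and \cref{quotient_with_inf_tor}.

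Now suppose $A$ is torsion-free of finite rank $n$ but not finitely generated, e.g.\ $A=\mathbb{Z}[1/k]$ or any $K(M_1,\dots,M_m)$. Then $|A/pA|\le p^n$ for every prime $p$. So every exponent-$p$ quotient of $A$ is finite, and no reduction to your first case exists. Reducing modulo large primes only gives quotients of size polynomial in $p$, so the exponential-index versus polynomial-diameter comparison in your fallback is unavailable. For $\mathrm{BS}(1,k)$ one can write down ad hoc normal subgroups (\cref{main_thm_bs1_k}). Your sketch gives no way to do this for a general $K(M_1,\dots,M_m)\rtimes\mathbb{Z}^m$, say when some $M_i$ has eigenvalues of modulus $1$ that are not roots of unity.

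The paper uses a different mechanism. It shows that u.c.\ forces every eigenvalue of every $M_i$ to be a root of unity (\cref{char_K_Z_m}). That gives virtual nilpotence (\cref{K_by_zm}) and hence finite generation of $A$. The proof of \cref{char_K_Z_m} runs as follows:
\begin{itemize}
\item it takes primes $p$ at which all characteristic polynomials split;
\item it uses \cref{diverges_mul_order_rat} so that the multiplicative order of a chosen eigenvalue mod $p$ tends to infinity;
\item it builds invariant subgroups $A_p$ of index $p$ (\cref{index_p_subgroup_rat});
\item it combines the Breuillard--Green--Tao consequence \cref{matt_lemma} with \cref{normal_subgroup_contain_Zp} to get $\operatorname{diam}(G/(A_p\rtimes\mathbb{Z}^m))\le p^{\eps}$ for the non-normal subgroups $A_p\rtimes\mathbb{Z}^m$.
\end{itemize}
The contradiction is therefore ``diameter at most index$^{\eps}$'', not exponential versus polynomial. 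Some input of this strength is needed here. It is the same kind of input behind \cref{thm:poly}, which you already invoke for the polycyclic case.

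\textbf{Three further points.}

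First, in the bounded-exponent case you assert the $P_n$-submodule $B_n$ (with $\bar A/B_n$ spanned by the $t^ia$, $0\le i<n$, and of dimension at least $cn$) but never construct it. The naive choice $B_n=(t^n-1)\bar A+\sum_j(s_j-1)\bar A$ can be all of $\bar A$. For example, take $\bar A=\mathbb{F}_p[t^{\pm1},(1+t)^{-1}]$ with $s$ acting as multiplication by $1+t$; then $(s-1)\bar A=t\bar A=\bar A$. Shrinking $P_n$ to avoid this can make the transversal words of $P/P_n$ too long. You do not need a single orbit at all. As in \cref{inf_g_abelian_u_c}, take $G$-invariant finite-index subgroups $A_i\le\bar A$ (which exist by residual finiteness) and set $K_i=A_i\rtimes P$. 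The span of translates of a finite normal generating set by the $\ell$-ball of $P\cong\mathbb{Z}^m$ grows strictly until it fills $\bar A/A_i$ (\cref{once_stop_then_stablise}). Hence $\ell\le\log_p[\bar A:A_i]$, while $\operatorname{diam}(G/K_i)$ is polynomial in $\ell$.

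Second, the claim that a non-polycyclic just-non-virtually-nilpotent quotient is, at finite index, a split extension $A\rtimes\mathbb{Z}^m$ with $A$ abelian is not a routine consequence of Groves' work. Both the splitting and the abelian top rest on Robinson--Wilson's results on PJNP groups, as used in \cref{reduction_non_polycyclic_structure}. Also, $A$ is a finitely generated $\mathbb{Z}P$-module that fails to be finitely generated as a group, not the reverse.

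Third, your easy direction works but is roundabout. A finitely generated virtually nilpotent group has polynomial growth, so \cref{poly_growth_implies_uniform_D_a} applies to $G$ directly. Your unproved comparison lemma is then unnecessary.
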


In the Milnor--Wolf Theorem, the finitely generated soluble case is reduced to the abelian-by-polycyclic case, and the abelian-by-polycyclic case is handled via a straightforward counting argument (see, for example, \citep[Theorem 14.31]{ggt}). In contrast, the proof of \cref{main_thm} is significantly more involved. 
We will refine the reduction process further by reducing the problem to the abelian-by-abelian case where the extension splits. We then divide our analysis into two classes of  soluble groups: those with finite subgroup rank and those with infinite subgroup rank. Here and throughout this paper, we say a group $G$ has \textit{finite subgroup rank} if there exists a positive integer $r$ such that every finitely generated subgroup of $G$ can be generated by at most $r$ elements. Conversely, $G$ is said to have \textit{infinite subgroup rank} if no such bound exists.

In the finite subgroup rank case, although the group structure is well understood, constructing a sequence of finite coset spaces that violates \eqref{diam_lower_bound} is highly delicate. This relies, for instance, on a deep result of Breuillard, Green, and Tao on the structure of approximate groups \citep{bgt}. In contrast, for groups of infinite subgroup rank, the structure is more complicated, but producing such a sequence is considerably more straightforward; in fact, we obtain a sharp bound on the diameter of finite coset spaces in terms of their index.

We next study general finitely generated groups satisfying the uniform almost flatness condition. Recent work of Eberhard, Maini, Sabatini, and Tracey studied finitely generated groups whose finite quotients do not have non-abelian composition factors of arbitrarily large Lie rank  \citep{eberhard2026diameterboundsarbitraryfinite}. Using a similar idea, we show that a finitely generated group with u.q. has a finite-index subgroup all of whose finite quotients are soluble. The author is grateful to Sean Eberhard for explaining this argument. As a consequence, we obtain the following two results.

\begin{thm} \label{main_thm_residually_soluble}
Let $G$ be a residually finite group with the u.q. property. Then $G$ is virtually residually soluble.
\end{thm}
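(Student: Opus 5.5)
The plan is to deduce the theorem from fact (ii) stated above, which is proved in \cref{last_section}: a finitely generated group with the u.q.\ property has a finite-index subgroup $H$ all of whose finite quotients are soluble. Given this, the theorem is a formal consequence of residual finiteness, and the real work lies in establishing (ii).

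\textbf{Deducing the theorem from (ii).} Let $H\le G$ be the finite-index subgroup provided by (ii). Since $G$ is residually finite, so is $H$. For any $1\neq h\in H$, pick a finite-index normal subgroup $N\trianglelefteq G$ with $h\notin N$. Then $H/(H\cap N)$ embeds in the finite group $G/N$, so it is a finite quotient of $H$ and hence soluble by (ii). This gives a soluble quotient of $H$ in which $h$ survives. Therefore $H$ is residually soluble, and $G$ is virtually residually soluble.

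\textbf{Proving (ii).} I would follow the Eberhard--Maini--Sabatini--Tracey strategy in three steps.

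\begin{enumerate}[(1)]
\item \emph{Passing the u.q.\ bound to sections.} Suppose $G$ has u.q.$(\alpha)$ with constant $\varepsilon$, and let $Q=G/N$ be a finite quotient. Take a chief factor $M/K$ of $Q$. The section $M/K$ is the socle-type piece $T^k$ of the group $X=Q/C_Q(M/K)$, whose generating set is the image of $S$. The u.q.\ bound gives $\diam_S(X)\ge\varepsilon|X|^\alpha$. We then need $\diam(X)$ to be controlled by $\diam(T)$ together with the action on the $k$ factors. Two regimes arise:
\begin{itemize}
\item If $T$ is a non-abelian simple group of large order, known diameter bounds apply: Breuillard--Green--Tao and Pyber--Szab\'o for bounded Lie rank, and the Helfgott--Seress type $\exp((\log|T|)^{O(1)})$ bounds for alternating groups and large rank. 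In every case $\diam(X)$ is at most quasi-polynomial in $\log|X|$, which contradicts the polynomial lower bound once $|T|$ is large.
\item If $k$ is large with $T$ fixed, the permutation action of $X$ on the $k$ factors is transitive and $S$-generated. The diameter is then roughly the diameter of the top permutation group times a factor depending on $T$, and I would bound this polylogarithmically via Schreier-graph estimates.
\end{itemize}
\item \emph{A uniform bound.} Step (1) should yield a constant $C=C(\alpha,\varepsilon,|S|)$ such that every non-abelian composition factor of every finite quotient of $G$ has order at most $C$.
\item \emph{Extracting $H$.} Let $H$ be the intersection of the kernels of all homomorphisms from $G$ to $\operatorname{Aut}(T^m)$ or to groups of order bounded in terms of $C$. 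This intersection is finite because $G$ is finitely generated. The aim is to show that every finite quotient of $H$ has no non-abelian composition factors, which I would obtain via a Babai--Cameron--P\'alfy style argument. The key point is that if the composition factors are bounded, then the non-soluble part of a finite quotient acts on a bounded-size set modulo its soluble radical.
\end{enumerate}

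\textbf{Main obstacle.} The hard step is (1) in the case of many copies, with $k$ large. Here one must show that $\diam(T\wr P)$-type groups are too small relative to their order, and this requires care with the transitive permutation group $P$. I would first try to use the fact that the u.q.\ bound also forces $P$ itself, a quotient of $G$, to satisfy $\diam(P)\ge\varepsilon|P|^\alpha$. Then I would bound $\diam(T^k\rtimes P)\le \diam(P)+O(\diam(T)\cdot\text{poly}\log k)$ and compare this with $|T|^{k\alpha}$.
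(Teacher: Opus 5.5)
Your deduction of the theorem from (ii) is exactly the paper's argument (\cref{u_q_every_finite_quotient_is_solbule} followed by residual finiteness), and it is correct. The gap is in your proof of (ii), specifically step (1), which carries all the content.

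The bound $\diam(T^k\rtimes P)\le\diam(P)+O(\diam(T)\cdot\mathrm{poly}\log k)$ that you propose for the many-copies regime is false. Take $T\wr C_k$ generated by $t_1^{\pm1},t_2^{\pm1}$ in one coordinate together with the shift $\sigma^{\pm1}$. Each $t$-letter changes only one coordinate of the base. So reaching $(x_1,\dots,x_k;1)$ with every $x_i$ of maximal length costs at least $k\,\diam(T)$ letters, which exceeds $k/2+O(\diam(T)\cdot\mathrm{poly}\log k)$ for large $k$.

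Any valid bound of this kind involves $\diam(P)$, so you need an \textit{upper} bound on $\diam(P)$ for every transitive $P\le S_k$ and every generating set. Applying u.q.\ to $P$ gives a lower bound, which points the wrong way. The trivial bound $\diam(P)\le|P|$ is useless when $|P|$ is near $k!$. The upper bounds that exist (Babai--Seress, Helfgott--Seress) are deep, and they are only subexponential or quasi-polynomial in $k$, not polylogarithmic.

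In the large-$|T|$ regime you also overstate what is known. Quasi-polynomial diameter bounds for groups of Lie type of unbounded rank are Babai's conjecture, which is open; Helfgott--Seress treats only alternating groups. What is known, $\diam(T)\le|T|^{o(1)}$ (via bounds of shape $q^{O(n(\log n)^2)}$ of Halasi--Mar\'oti--Pyber--Qiao), would suffice. It must still be transferred from $T$ to $X$, whose generators need not lie in $T^k$. A diameter-based route might be completed this way, but as written step (1) is not established.

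Step (2) is also too weak. Bounding non-abelian \textit{composition} factors does not bound the number $k$ of simple factors in a chief factor. Without that, the intersection in step (3) over homomorphisms to $\operatorname{Aut}(T^m)$ need not have finite index. For example, a finitely generated dense subgroup of the profinite iterated wreath product $\cdots\wr A_5\wr A_5$ has only $A_5$ as a non-abelian composition factor of its finite quotients. Yet no finite-index subgroup of it has all finite quotients soluble. You need a bound on $|T^k|$, as in \cref{u_q_all_finite_quotient_bdd}. Given that, step (3) needs no Babai--Cameron--P\'alfy argument: $H$ lies in the kernel of the action on every non-abelian chief factor, hence centralises it (\cref{bound_on_non_ablina_normal_implies_virtualy_every_finite_quotietn_soluble}).

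The paper avoids diameter upper bounds entirely via \cref{k_h_normal}, a consequence of Breuillard--Green--Tao. A finite quotient $F$ with $\diam(F)\ge\varepsilon|F|^\alpha$ has normal subgroups $K\le H$ with $|F:H|\ll_\alpha 1$, and with $H/K$ abelian, $d$-generated ($d\ll_\alpha 1$) and of order $\gg_\alpha|F|^\alpha$. Pass to the monolithic quotient $F/C_F(M)$. If $K\cap M=1$, then $H\cap M$ is an abelian normal subgroup inside $M$, hence trivial, so $|M|\le|F:H|$. If $M\le K$, then $H/K$ is a section of $\operatorname{Out}(T^n)=\operatorname{Out}(T)\wr S_n$ containing an element of order $\gg|T|^{n\alpha/d}$. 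But element orders there are $\ll\log|T|\,e^{n^{0.75}}$, which bounds both $n$ and $|T|$.
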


\begin{thm} \label{main_thm_linear}
Let $G$ be a finitely generated linear group. Then $G$ has the u.c.\ property if and only if $G$ is virtually nilpotent.
\end{thm}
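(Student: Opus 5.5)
The plan is to combine the two main structural results, \cref{main_thm} and \cref{main_thm_residually_soluble}, with Tits' alternative and Mal'cev's theorem on linear groups.

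\textbf{Easy direction.} Suppose $G$ is virtually nilpotent. Since $G$ is finitely generated, it is polycyclic-by-finite, and it is residually finite by Mal'cev. The equivalence in \cref{thm:poly}, together with the passage to finite-index subgroups covered by \cite[Lemma~2.2]{Guo_Tointon}, gives the u.c.\ property. If $G$ is finite, it has no filtration; in that case the property is either vacuous or trivially satisfied, depending on the convention, and I would note this separately.

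\textbf{Main direction.} Suppose $G$ is finitely generated, linear, and has the u.c.\ property.
\begin{enumerate}[(1)]
\item By Mal'cev, $G$ is residually finite. Since u.c.\ implies u.q.\ (normal subgroups are in particular subgroups), \cref{main_thm_residually_soluble} shows that $G$ has a finite-index subgroup $G_0$ that is residually soluble.
\item Apply Tits' alternative to $G_0$: either $G_0$ is virtually soluble, or it contains a non-abelian free subgroup. I want to exclude the free case.
\item To exclude it, I would not rely on residual solubility alone, since free groups are themselves residually soluble. Instead I would use the stronger form of the result the paper proves in \cref{last_section}: a group with u.q.\ has a finite-index subgroup $G_1$ all of whose finite quotients are soluble.
\item Assume $G_1$ is not virtually soluble. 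Then, via Tits' alternative and strong approximation (Weisfeiler / Nori / Pink), its Zariski closure modulo the solvable radical is a nontrivial semisimple group. Reducing modulo suitable primes then yields finite quotients of $G_1$ with non-abelian simple composition factors of Lie type, which is a contradiction. Hence $G_1$, and so $G$, is virtually soluble.
\item Now apply \cref{main_thm} to a finite-index soluble subgroup $H$. This requires knowing that u.c.\ passes to finite-index subgroups, which I expect follows from comparing word metrics: for $K \le H$, the diameter of $H/K$ and the diameter of $G/K$ differ by at most a constant factor plus an additive constant, and the indices differ by the factor $[G:H]$. Then $H$ is virtually nilpotent, and hence so is $G$.
\end{enumerate}

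\textbf{Main obstacle.} The hard step is (4): producing non-soluble finite quotients of a finitely generated linear group that is not virtually soluble. The first thing I would try is to take the Zariski closure of $G_1$, divide by its radical to get a semisimple group, specialise the finitely generated coefficient ring to finite fields, and invoke the fact that Zariski-dense subgroups surject onto the groups of $\mathbb{F}_q$-points for almost all specialisations. If the paper's result in \cref{last_section} is already stated in the form ``virtually all finite quotients soluble'', this step reduces to that standard linear-group fact.
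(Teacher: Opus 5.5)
Your skeleton matches the paper's: u.c.\ implies u.q., so \cref{u_q_every_finite_quotient_is_solbule} gives a finite-index $G_1$ all of whose finite quotients are soluble. One then shows $G_1$ is (virtually) soluble and finishes with \cref{main_thm} and \cref{uniform_D_alpha_passes_finte_index_subgroup}. Your step (5) is exactly that lemma, so no word-metric comparison is needed. Steps (1)--(2), via \cref{main_thm_residually_soluble} and Tits' alternative, are not needed, as you partly noticed yourself.

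The real difference is step (4). You argue through Zariski closures, the semisimple quotient and strong approximation to produce a non-soluble finite quotient. The paper instead proves \cref{all_finite_quotient_solbule_of_linear_impileis_soblue} in a few lines:
\begin{itemize}
\item If $G_1 \le \mathrm{GL}_n(K)$, Zassenhaus gives a bound $f(n)$ on the derived length of every soluble subgroup of $\mathrm{GL}_n(F)$, uniformly over all fields $F$.
\item Mal'cev's reduction argument gives, for each $g \neq 1$, a homomorphism $\pi\colon G_1 \to \mathrm{GL}_n(F)$ with $F$ finite and $\pi(g) \neq 1$. The point is that the degree $n$ is unchanged.
\item Since $\pi(G_1)$ is a finite, hence soluble, quotient, $\pi(G_1^{(f(n))}) = \pi(G_1)^{(f(n))} = 1$. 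Hence $G_1^{(f(n))} = 1$.
\end{itemize}
This gives more (solubility of $G_1$, not just virtual solubility) with far less machinery. It also avoids the technical points your sketch skips over. The statement ``Zariski-dense subgroups surject onto the $\mathbb{F}_q$-points for almost all specialisations'' is imprecise as written. One must pass to the identity component and the simply connected cover, and in positive characteristic Pink's theorem has a more delicate form (possibly smaller field of definition, non-standard isogenies). The weaker conclusion you actually need, that some non-soluble finite quotient exists, is still true. So your route is valid, just much heavier than necessary.

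For the easy direction, apply \cref{poly_growth_implies_uniform_D_a} directly to $G$, which has polynomial growth. \cref{thm:poly} is stated for polycyclic groups, and \cref{uniform_D_alpha_passes_finte_index_subgroup} passes u.c.\ down to finite-index subgroups, not up, so your citation does not support the route you describe.
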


We further establish the following results concerning the u.q.\ property in two classical examples of non-polycyclic soluble groups. It is a well-known result that finitely generated abelian-by-cyclic groups are linear.

\begin{thm}
\label{main_thm_bs1_k} \label{main_bs}
For $k \geq 2$, the Baumslag--Solitar group $\mathrm{BS}(1,k) \cong  \mathbb{Z}\left[\frac{1}{k}\right] \rtimes_k \Z $ does not have the u.q. property.
\end{thm}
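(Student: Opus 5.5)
Our plan is to exhibit an explicit sequence of finite quotients of $G=\mathrm{BS}(1,k)=\Z[1/k]\rtimes\Z$, with $t$ acting by multiplication by $k$, whose diameters are smaller than any fixed power of their orders. Take generators $S=\{a,t\}$ with $a=1\in\Z[1/k]$. For a prime $p\nmid k$, reduction mod $p$ gives a surjection $\Z[1/k]\to\Z/p$, and it is equivariant for multiplication by $k$. So for any $m$ with $k^m\equiv 1 \pmod p$ we obtain a finite quotient
\[
Q_{p,m}=\Z/p\rtimes \Z/m ,
\]
where the generator of $\Z/m$ acts by multiplication by $k$. Its order is $pm$. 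The natural choice is $m=\ord_p(k)$, the multiplicative order of $k$ modulo $p$.

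The key step is to bound $\diam_S(Q_{p,m})$. A word of the form $t^{e_0}at^{e_1-e_0}a\cdots$ realises, in the normal subgroup $\Z/p$, an element $\sum_i c_i k^{e_i}$. Therefore any residue that can be written in base $k$ with $L$ digits, using exponents in $[0,L)$, is reachable by a word of length $O(kL)$. Since every integer in $[0,p)$ has a base-$k$ expansion with $L=\lceil\log_k p\rceil$ digits, every element of $\Z/p$ has word length $O(k\log p)$. Adding at most $m$ letters $t$ to fix the $\Z/m$ coordinate gives
\[
\diam_S(Q_{p,m})\ll_k \log p + m .
\]

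It remains to choose the pair $(p,m)$ so that the right-hand side is $o((pm)^\alpha)$ for every $\alpha>0$. This is the main obstacle, because $\ord_p(k)$ may be as large as $p-1$. To avoid that, we reverse the choice. Fix $m$ and let $p$ be any prime divisor of $k^m-1$ not dividing $k$; such a prime exists when $k^m-1>1$. Set $m_p=\ord_p(k)$, which divides $m$, so the quotient $Q_{p,m_p}$ has diameter $\ll \log p+m$.

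We then need the index to be large compared with $m$. Here we use Zsigmondy's theorem: for all but finitely many $m$ there is a primitive prime divisor $p$ of $k^m-1$. For such $p$ we have $\ord_p(k)=m$, so $p\equiv 1\pmod m$ and $p>m$. This alone gives only $|Q|=pm>m^2$ against a diameter $\gg m$, which is not enough.

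We therefore vary the quotient instead, replacing $\Z/p$ by $\Z/N$ with $N=k^m-1$, which is coprime to $k$. Multiplication by $k$ on $\Z/N$ has order exactly $m$, so $Q=\Z/N\rtimes\Z/m$ is a quotient of $G$ of order $Nm\approx k^m m$. The same base-$k$ argument shows its diameter is $O(km)$. Hence
\[
\diam_S(Q)\ll \log|Q| ,
\]
which violates $\diam_S(Q)\ge\eps|Q|^\alpha$ for every $\alpha,\eps>0$ as $m\to\infty$. With this choice the earlier obstacle disappears, and Zsigmondy's theorem is not needed. What remains is to verify that $a\mapsto 1$, $t\mapsto$ the generator of $\Z/m$ defines a surjective homomorphism. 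This holds because $\Z[1/k]\to\Z/N$ is well defined, as $k$ is invertible mod $N$, and because $k^m\equiv1 \pmod N$.
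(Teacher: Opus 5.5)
Your final construction is correct and is essentially the paper's proof: the paper uses exactly the quotients $\Z/(k^n-1)\rtimes\Z/n$ and bounds their diameter by base-$k$ expansion, so your preliminary detour through primes $p\mid k^m-1$ and Zsigmondy's theorem can simply be dropped. The only difference is in the diameter bound. Your Horner-type word $a^{c_0}ta^{c_1}t\cdots ta^{c_{L-1}}t^{-(L-1)}$ yields $\diam_S(Q)=O(k\log|Q|)$, whereas the paper writes each $k^i$ as a separate conjugate and gets the weaker but equally sufficient bound $O(n^2)$ (your count also correctly allows digits up to $k-1$, which the paper's count glosses over).
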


\begin{thm}
\label{main_thm__fg_abelian_wreath_not_uq}
Let $A$ be a finitely generated non-trivial abelian group. Then the wreath product $G = A \wr \mathbb{Z}$ does not have the u.q. property.
\end{thm}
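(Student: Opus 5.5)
The plan is to exhibit an explicit family of finite quotients of $G$ whose diameters grow only linearly in a parameter $n$, while their orders grow exponentially in $n$. This is incompatible with $\operatorname{diam}_S(G/N)\ge\varepsilon[G:N]^\alpha$ for every fixed $\alpha>0$.

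\textbf{Reductions.} Two preliminary reductions come first.
\begin{enumerate}[(i)]
\item The u.q.\ property does not depend on the finite generating set $S$. If $S'$ is another finite generating set, then each element of $S'$ is a word of length at most $C$ in $S$. Hence $\operatorname{diam}_{S}(G/N)\le C\operatorname{diam}_{S'}(G/N)$, and $u.q.(\alpha)$ transfers with $\varepsilon$ replaced by $\varepsilon/C$.
\item The u.q.\ property passes to quotients. If $M\trianglelefteq G$ and $Q=G/M$, then finite-index normal subgroups of $Q$ correspond to finite-index normal subgroups $N\supseteq M$ of $G$. This correspondence preserves the index, and $\operatorname{diam}_{\bar S}(Q/\bar N)=\operatorname{diam}_S(G/N)$ for the image $\bar S$ of $S$.
\end{enumerate}
Since $A$ is a non-trivial finitely generated abelian group, it surjects onto $C_p$ for some prime $p$. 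This induces a surjection $A\wr\Z\to C_p\wr\Z$. By (ii), it therefore suffices to show that $C_p\wr\Z\cong \F_p[t,t^{-1}]\rtimes\Z$ does not have u.q. Here the generator $t$ of $\Z$ acts by multiplication by $t$, and we use the generating set $\{a,t\}$ with $a=1\in\F_p[t^{\pm1}]$, which is legitimate by (i).

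\textbf{The quotients.} For each $n\ge1$, let
\[N_n=(t^n-1)\F_p[t^{\pm1}]\rtimes\langle t^n\rangle .\]
We first check that $N_n$ is a normal subgroup.
\begin{itemize}
\item The ideal $(t^n-1)$ is $t$-invariant, so conjugation by $t$ preserves $N_n$.
\item For any $b$ in the base group, the commutator $[t^n,b]=(t^n-1)b$ lies in the ideal, so conjugating $t^n$ by base elements also stays inside $N_n$.
\end{itemize}
The quotient is
\[G/N_n\cong \F_p[t]/(t^n-1)\rtimes C_n\cong C_p\wr C_n,\]
the lamplighter group over an $n$-cycle. Its order, and hence $[G:N_n]$, equals $p^n n$.

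\textbf{Diameter bound.} Every element of $C_p\wr C_n$ has the form $(\sum_{i=0}^{n-1}c_it^i,\,t^m)$ with $0\le c_i<p$. It can be written as
\[a^{c_0}\,t\,a^{c_1}\,t\cdots t\,a^{c_{n-1}}\,t^{\,m-(n-1)}.\]
The exponent $m-(n-1)$ is read modulo $n$ and taken in absolute value at most $n$. This word has length at most $(p-1)n+(n-1)+n\le (p+1)n$, so $\operatorname{diam}(G/N_n)\le(p+1)n$.

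\textbf{Conclusion.} If $u.q.(\alpha)$ held with constant $\varepsilon$, we would have $(p+1)n\ge\varepsilon(np^n)^\alpha\ge\varepsilon p^{\alpha n}$ for all $n$. This is false for large $n$ because the right-hand side grows exponentially. Hence $G$ does not have the u.q.\ property.

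There is no serious obstacle here. The only points needing care are the normality check for $N_n$, which is exactly the condition $t^n-1\in(t^n-1)$, and the legitimacy of passing to the quotient $C_p\wr\Z$ and to a convenient generating set, both handled in the reductions above.
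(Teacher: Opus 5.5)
Your proof is correct and follows the paper's route: you reduce to $\mathbb{Z}_p\wr\mathbb{Z}$ via a surjection $A\twoheadrightarrow\mathbb{Z}_p$ and use the quotients $\mathbb{Z}_p\wr\mathbb{Z}_n$ of order $p^n n$, and your $N_n$ is exactly the kernel of the paper's $n$-periodisation map. Your explicit lamplighter-walk word gives the sharper bound $\operatorname{diam}\le(p+1)n$, compared with the paper's $2pn^2$ from summing conjugates, but either polynomial bound suffices against the exponential growth of $|Q_n|$.
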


The structure of the paper is as follows. In Section 2, we recall basic results in group theory, including a review of the uniformly almost flat property. In Sections 3 and 4, we treat the finitely generated abelian-by-abelian case with finite and infinite subgroup rank, respectively, under the assumption that the extension splits. In Section 5, we show that the general soluble case can be reduced to the abelian-by-abelian case with a split extension. This reduction relies on some significant work of Robinson and Wilson \citep{Robin_Wilson_jnp}, using techniques from group cohomology \citep{MR679154} and the theory of modules over polycyclic group rings \citep{peter_hall_finiteness_certain_soluble_groups}. In Section 6, we study general finitely generated groups satisfying the uniform almost flatness condition, where we prove \cref{main_thm_residually_soluble} and \cref{main_thm_linear}; this then naturally leads to a discussion of how to generalise \cref{main_thm} to general finitely generated residually finite groups. Finally, in Section 7, we prove \cref{main_thm_bs1_k} and \cref{main_thm__fg_abelian_wreath_not_uq}.

\subsection*{Acknowledgments} 
I would like to thank my PhD supervisor, Matthew Tointon, for introducing me to this project and encouraging me to complete it after my PhD. I am grateful to Peter Kropholler for his kindness and valuable help; his expertise greatly streamlined the proof of the main theorem, especially during a period when I had limited contact with the academic community. I also sincerely thank Sean Eberhard for his exceptional patience in sharing his insights on finite groups with me, for hosting me during a week-long research visit, and for his continued discussions and support throughout this project. Finally, I am grateful to my parents and to God for their support during the completion of this work.

\section{Some group theory}
\subsection{The core of a group} \label{subsec_core}

We begin by addressing a point mentioned in the introduction section: a finitely generated soluble group is not necessarily residually finite, yet whether it is  virtually nilpotent can be deduced from its residually finite quotients.

\begin{prop} \label{prop:rf_nilpotent}
A finitely generated soluble group $G$ is virtually nilpotent if and only if every residually finite quotient of $G$ is virtually nilpotent.
\end{prop}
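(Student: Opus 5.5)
We prove the forward direction directly and the reverse direction by induction on the derived length of $G$. The idea is that, for finitely generated abelian-by-polycyclic groups, residual finiteness comes for free by P.~Hall's theorem. So at each stage of the induction the group itself will turn out to be a residually finite quotient of itself, and the hypothesis then applies to it.

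The forward implication is immediate. A quotient of a virtually nilpotent group is virtually nilpotent: if $N\le G$ has finite index and is nilpotent, then its image in any quotient of $G$ is nilpotent and of finite index. So it remains to show that if every residually finite quotient of $G$ is virtually nilpotent, then so is $G$.

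We argue by induction on the derived length $d$ of $G$. If $d\le 1$, then $G$ is a finitely generated abelian group, hence residually finite. Since $G$ is a residually finite quotient of itself, the hypothesis makes it virtually nilpotent (in fact it is abelian already). Now let $d\ge 2$, and let $A=G^{(d-1)}$ be the last non-trivial term of the derived series. Then $A$ is an abelian normal subgroup, and $G/A$ is finitely generated soluble of derived length $d-1$. Every residually finite quotient of $G/A$ is also a residually finite quotient of $G$, so it is virtually nilpotent. By induction, $G/A$ is virtually nilpotent, and hence polycyclic-by-finite, since a finitely generated virtually nilpotent group is polycyclic-by-finite.

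Choose a finite-index subgroup $G_0\le G$ containing $A$ such that $G_0/A$ is polycyclic. Then $G_0$ is finitely generated, being of finite index in $G$. Moreover $G_0/A$ is polycyclic, hence finitely presented. It follows that $A$ is finitely generated as a normal subgroup of $G_0$, that is, $A$ is a finitely generated module over the group ring $\Z[G_0/A]$. P.~Hall's theorem \citep{peter_hall_finiteness_certain_soluble_groups} states that a finitely generated abelian-by-polycyclic group is residually finite, so $G_0$ is residually finite. Since $G_0$ has finite index in $G$, so does its normal core $K$, and $K$ is residually finite as a subgroup of $G_0$. Given $1\ne g\in G$: if $g\notin K$, then $K$ itself is a finite-index normal subgroup of $G$ avoiding $g$. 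If $g\in K$, pick a finite-index normal subgroup $M$ of $K$ with $g\notin M$, and take $\bigcap_{x\in G} M^x$. This is a finite intersection, because conjugation by the finitely many cosets of $K$ gives only finitely many conjugates of $M$. So it is a finite-index normal subgroup of $G$ avoiding $g$. Hence $G$ is residually finite. Since $G$ is a residually finite quotient of itself, the hypothesis gives that $G$ is virtually nilpotent.

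The only non-elementary step is the appeal to Hall's residual finiteness theorem. The point requiring care is to pass to the finite-index subgroup $G_0$ so that the top is genuinely polycyclic rather than merely polycyclic-by-finite. This matters because Hall's theorem applies to finitely generated abelian-by-polycyclic groups.
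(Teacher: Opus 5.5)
Your proof is correct. It rests on the same key input as the paper, namely residual finiteness of finitely generated abelian-by-polycyclic groups, but you organise it differently.

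The paper does not induct. Every residually finite quotient of $G$ is a quotient of $G/\operatorname{Core}(G)$, which is itself residually finite, so the hypothesis reduces to a single statement about that one group. The paper then proves (\cref{v_poly_case}), for any finitely generated $G$ with $G/\operatorname{Core}(G)$ virtually polycyclic, that $G/[\operatorname{Core}(G),\operatorname{Core}(G)]$ is abelian-by-(virtually polycyclic). This quotient is therefore residually finite, which forces $\operatorname{Core}(G)$ to be perfect. Solubility then makes $\operatorname{Core}(G)$ trivial.

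You instead peel off the derived series one abelian layer at a time. You invoke the hypothesis at every level and show directly that each group in the induction is residually finite.

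The paper's route isolates a lemma valid for arbitrary finitely generated groups, and uses solubility only in the last line (a soluble perfect group is trivial). Yours avoids the residual-finiteness core and the perfectness trick entirely, at the modest cost of an induction on derived length.

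One small correction: the residual finiteness of finitely generated abelian-by-polycyclic groups is the Roseblade--Jategaonkar theorem, which is how the paper cites it. Hall's theorem covers the abelian-by-nilpotent case.
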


\begin{rem}
Note that this property does not hold for all finitely generated groups. For instance, there exist infinite finitely generated simple groups that exhibit exponential growth. A more elementary example is the wreath product $A_5 \wr \mathbb{Z}$. A straightforward exercise shows that this group has exponential growth, while all its residually finite quotients are cyclic.
\end{rem}

We will prove a slightly stronger statement than that of \cref{prop:rf_nilpotent}. Recall that the (residual finiteness) \emph{core} $\operatorname{Core}(G)$ of a countable group $G$ is defined as the intersection of all finite-index subgroups of $G$, and is often referred to as the finite residual of $G$. A group $G$ is residually finite if and only if $\operatorname{Core}(G) = \{1\}$.

\begin{lemma} \label{v_poly_case}
Let $G$ be a finitely generated group. Suppose $G / \operatorname{Core}(G)$ is virtually polycyclic. Then $\operatorname{Core}(G)$ is perfect.
\end{lemma}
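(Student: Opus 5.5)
The plan is to pass to the quotient $\bar G = G/[C,C]$, where $C=\operatorname{Core}(G)$, and show that $\bar G$ is residually finite. Perfectness then follows formally. Every finite-index subgroup of $\bar G$ pulls back to a finite-index subgroup of $G$ containing $[C,C]$, and any such subgroup contains $C$ by the definition of the core. Hence the image $C/[C,C]$ of $C$ lies in every finite-index subgroup of $\bar G$, i.e.\ $C/[C,C]\le \operatorname{Core}(\bar G)$. So if $\bar G$ is residually finite, then $C/[C,C]$ is trivial and $C=[C,C]$.

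To see that $\bar G$ is residually finite, I would first record its structure. It is an extension of the abelian normal subgroup $A = C/[C,C]$ by $Q = G/C$, which is virtually polycyclic by hypothesis. Since $C$ acts trivially on $A$ by conjugation, the conjugation action of $\bar G$ makes $A$ a $\Z Q$-module. The key finiteness claim is that $A$ is a \emph{finitely generated} $\Z Q$-module, and I would argue it as follows.
\begin{enumerate}[(i)]
\item Virtually polycyclic groups are finitely presented.
\item Since $G$ is finitely generated, write $G=F/R$ with $F$ free of finite rank, and let $K\supseteq R$ be the preimage of $C$ in $F$. As $F/K\cong Q$ is finitely presented, $K$ is the normal closure in $F$ of finitely many elements.
\item Consequently $C$ is the normal closure in $G$ of finitely many elements $c_1,\dots,c_m$.
\item Therefore $A$ is generated as a $\Z Q$-module by the images of $c_1,\dots,c_m$.
\end{enumerate}

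Thus $\bar G$ is a finitely generated group that is abelian-by-(virtually polycyclic), with the abelian normal subgroup finitely generated as a module over the group ring of the quotient. At this point I would invoke the classical residual finiteness theorem for such groups. P.~Hall proved it for abelian-by-polycyclic groups \citep{peter_hall_finiteness_certain_soluble_groups} in the nilpotent-quotient case. Jategaonkar and Roseblade extended it: finitely generated modules over $\Z Q$ with $Q$ virtually polycyclic are residually finite, and the corresponding finitely generated extensions are residually finite. This gives that $\bar G$ is residually finite, which completes the argument by the first paragraph.

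The main obstacle is exactly this residual finiteness step. It is not elementary: it relies on the Hall–Jategaonkar–Roseblade theory of modules over polycyclic group rings (e.g.\ that such modules are residually finite, via the fact that maximal ideals of $\Z Q$ have finite index). Since the paper already draws on \citep{peter_hall_finiteness_certain_soluble_groups}, I would cite it, together with Roseblade's extension to the virtually polycyclic case, rather than reprove it. The remaining steps, namely finite presentability of $Q$, finite normal generation of $C$, and the formal core argument, are routine.
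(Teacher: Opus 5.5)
Your proposal is correct and follows the paper's proof. Like the paper, you pass to $G/[\operatorname{Core}(G),\operatorname{Core}(G)]$, observe that it is a finitely generated abelian-by-(virtually polycyclic) group, invoke the Roseblade--Jategaonkar residual finiteness theorem, and deduce $\operatorname{Core}(G)=[\operatorname{Core}(G),\operatorname{Core}(G)]$; your module-generation steps (i)--(iv) are harmless but already subsumed by that theorem's finite generation hypothesis, and your first paragraph usefully spells out the core-containment step the paper leaves implicit.
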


\begin{proof}
By the Roseblade--Jategaonkar result \citep{Roseblade, JAT}, finitely generated abelian-by-polycyclic groups are residually finite. Since a group is residually finite if and only if it is virtually residually finite, it follows that abelian-by-(virtually polycyclic) groups are also residually finite. Consider the following quotient:
\[
\frac{G / [\operatorname{Core}(G), \operatorname{Core}(G)]}{\operatorname{Core}(G) / [\operatorname{Core}(G), \operatorname{Core}(G)]} \cong \frac{G}{\operatorname{Core}(G)}.
\]
The subgroup $\operatorname{Core}(G) / [\operatorname{Core}(G), \operatorname{Core}(G)]$ is abelian, and by hypothesis, the quotient $G / \operatorname{Core}(G)$ is virtually polycyclic. Thus, $G / [\operatorname{Core}(G), \operatorname{Core}(G)]$ is an abelian-by-(virtually polycyclic) group, which implies it is residually finite. Therefore we must have $[\operatorname{Core}(G), \operatorname{Core}(G)] = \operatorname{Core}(G).$
\end{proof}

The proof of \cref{prop:rf_nilpotent} follows immediately from the following corollary:

\begin{cor} \label{cor:core_nilpotent}
Let $G$ be a finitely generated soluble group. If $G / \operatorname{Core}(G)$ is virtually nilpotent, then $G$ is virtually nilpotent.
\end{cor}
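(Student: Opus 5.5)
Since $G$ is soluble, every subgroup of $G$ is soluble, and in particular $\operatorname{Core}(G)$ is soluble. A soluble perfect group is trivial, because its derived series is constant and must reach $\{1\}$. The plan is therefore to verify the hypothesis of \cref{v_poly_case}, namely that $G/\operatorname{Core}(G)$ is virtually polycyclic.

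First, $G/\operatorname{Core}(G)$ is finitely generated, being a quotient of $G$. By hypothesis it is virtually nilpotent. A finitely generated virtually nilpotent group has a finite-index subgroup that is finitely generated nilpotent. Finitely generated nilpotent groups are polycyclic, since all their subgroups are finitely generated and they are soluble. Hence $G/\operatorname{Core}(G)$ is virtually polycyclic.

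Now \cref{v_poly_case} applies and shows that $\operatorname{Core}(G)$ is perfect. Combined with solubility, this gives $\operatorname{Core}(G)=\{1\}$. Then $G\cong G/\operatorname{Core}(G)$ is virtually nilpotent.

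There is no real obstacle here. The one point to check is that \cref{v_poly_case} only needs $G$ finitely generated and $G/\operatorname{Core}(G)$ virtually polycyclic, and both are satisfied. The deep input, the Roseblade--Jategaonkar theorem, is already absorbed into that lemma.

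For \cref{prop:rf_nilpotent}, the forward direction is immediate: quotients of virtually nilpotent groups are virtually nilpotent. For the converse, note that $G/\operatorname{Core}(G)$ is a residually finite quotient of $G$. This holds because its finite-index subgroups pull back to finite-index subgroups of $G$, whose intersection is $\operatorname{Core}(G)$. So $G/\operatorname{Core}(G)$ is virtually nilpotent by hypothesis, and the corollary gives that $G$ is virtually nilpotent.
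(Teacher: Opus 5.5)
Your proof is correct and follows the paper's argument exactly. You show that $G/\operatorname{Core}(G)$ is virtually polycyclic, apply \cref{v_poly_case} to get $\operatorname{Core}(G)$ perfect, and conclude $\operatorname{Core}(G)=\{1\}$ since it is soluble; your explicit use of finite generation to pass from virtually nilpotent to virtually polycyclic is slightly more careful than the paper's phrasing, and your deduction of \cref{prop:rf_nilpotent} via the residual finiteness of $G/\operatorname{Core}(G)$ is also sound.
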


\begin{proof}
Since every virtually nilpotent group is virtually polycyclic, \cref{v_poly_case} implies that $\operatorname{Core}(G)$ is a perfect group. 
Furthermore, since $\operatorname{Core}(G)$ is a subgroup of the soluble group $G$, it must itself be soluble. However, the only soluble perfect group is the trivial group; hence, $\operatorname{Core}(G) = \{1\}$. Hence, $G \cong G / \operatorname{Core}(G)$ is virtually nilpotent.
\end{proof}







\subsection{Some Basic properties of uniform almost flatness}\label{sec:basic}
\begin{lem}[{\citealp[Lemma~2.1]{Guo_Tointon}}] \label{lem:indep.gen.set}
The definitions of having uniformly $\alpha$-almost flat quotients and having uniformly $\alpha$-almost flat coset spaces do not depend on the choice of the generating set.
\end{lem}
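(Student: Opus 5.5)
The plan is to compare the word metrics on the finite coset space $G/H$ coming from two finite generating sets $S$ and $S'$ of $G$. Set
\[
C = \max_{s \in S'} |s|_S .
\]
This is finite because $S'$ is finite and $S$ generates $G$. Symmetrically, set $C' = \max_{s\in S}|s|_{S'}$.

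First, lift the comparison from $G$ to $G/H$. The diameter of $G/H$ with respect to $S'$ is the least $D$ such that every coset $gH$ equals $wH$ for some word $w$ of length at most $D$ in $S'^{\pm1}$. Substituting for each letter of $w$ an $S$-word of length at most $C$ produces an $S$-word of length at most $CD$ representing the same element, hence the same coset. Therefore
\[
\diam_S(G/H) \le C \diam_{S'}(G/H),
\]
and symmetrically $\diam_{S'}(G/H) \le C' \diam_S(G/H)$. Equivalently, the identity map on $G/H$ is Lipschitz in both directions between the two Schreier graph metrics, so it changes diameters by at most a bounded multiplicative factor.

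Next, transfer the inequality. Suppose $\diam_S(G/H) \ge \varepsilon [G:H]^\alpha$ holds for every finite-index $H$ (respectively, for every finite-index normal $N$). Then
\[
\diam_{S'}(G/H) \ge C'^{-1}\diam_S(G/H) \ge (\varepsilon/C')[G:H]^\alpha .
\]
So $u.c.(\alpha)$ holds with respect to $S'$ with constant $\varepsilon/C'$, and $u.q.(\alpha)$ is handled in exactly the same way. The exponent $\alpha$ is unchanged, and only $\varepsilon$ is rescaled.

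There is no genuine obstacle. The only point needing care is that the constant $C'$ depends on $S$ and $S'$ but not on $H$, so the bound is uniform over all finite-index subgroups.
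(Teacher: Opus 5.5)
Your proof is correct and is the standard argument. The paper does not reprove this lemma but cites \citep[Lemma~2.1]{Guo_Tointon}, where it follows from the same observation you use: every element of $S'$ has $S$-word length at most $C$, so $\diam_S(G/H) \le C \diam_{S'}(G/H)$ with $C$ independent of $H$, and hence only $\varepsilon$ is rescaled while $\alpha$ is unchanged, for both the quotient and the coset-space versions.
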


\begin{lem}[{\citealp[Lemma~2.2]{Guo_Tointon}}] \label{lem:poly.growth} \label{poly_growth_implies_uniform_D_a}
Suppose $G$ is a group of polynomial growth of degree $d$. Then $G$ has uniformly $(1/d)$-almost flat coset spaces.
\end{lem}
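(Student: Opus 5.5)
The plan is to compare word lengths in $G$ with the size of balls, and then use the orbit of a ball on a coset space. Fix the finite generating set $S$ (by \cref{lem:indep.gen.set} the choice does not matter). Polynomial growth of degree $d$ gives a constant $C>0$ with $|B_S(n)| \le C n^d$ for all $n \ge 1$.

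Let $H \le G$ be a subgroup of finite index and set $D = \operatorname{diam}_S(G/H)$. The diameter here is measured in the Schreier graph of $G$ acting on $G/H$ with respect to $S$. The key observation is that every coset of $H$ is at distance at most $D$ from the base coset $H$. In other words, the ball $B_S(D)$ acts on $H$ and reaches every coset, so $B_S(D)H = G$. Hence the map $B_S(D) \to G/H$ given by $g \mapsto gH$ is surjective. (If the paper's convention places $H$ on the other side, use $HB_S(D)$ and the map $g \mapsto Hg$; the argument is symmetric.)

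This surjection yields $[G:H] \le |B_S(D)|$. We have $D \ge 1$ whenever $[G:H] \ge 2$, and in that case $[G:H] \le C D^d$, which rearranges to
\[
\operatorname{diam}_S(G/H) \ge C^{-1/d}[G:H]^{1/d}.
\]

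It remains to handle the trivial case $H = G$. There the diameter is $0$ while $\varepsilon[G:H]^{1/d} = \varepsilon > 0$, so the inequality as literally stated fails. I expect this to be the only delicate point. I would deal with it by adopting the natural convention that the defining inequality is required only for proper subgroups, or equivalently that the diameter is taken as the number of vertices minus one plus one, or by using the bound $|B_S(D)| \le C(D+1)^d$ together with $D+1 \le 2D$ for $D \ge 1$.

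With any of these conventions, $\varepsilon = \min(C^{-1/d}, 2^{-1}C^{-1/d})$ works uniformly over all $H$. This shows that $G$ has uniformly $(1/d)$-almost flat coset spaces, with no difficulty beyond the growth bound.
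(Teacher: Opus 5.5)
Your argument is correct and is the standard proof of this lemma, which the paper only cites from Guo--Tointon without reproducing: the Schreier-graph diameter $D$ forces $B_S(D)H=G$, so $[G:H]\le |B_S(D)|\le CD^d$. The case $H=G$ is indeed just a matter of convention, settled by restricting to proper subgroups; your third suggested fix via $D+1\le 2D$ does not by itself address $D=0$, but this does not affect the argument.
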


Note that the property of having uniformly $\alpha$-almost flat coset spaces (u.c.\((\alpha)\)) clearly passes to quotients. Furthermore, \citep[Lemma~2.5]{Guo_Tointon} shows that this property also passes to finite-index subgroups. Inspired by this, we introduce the following definition for ease of notation: for a group $G$, let $\mathcal{C}(G)$ denote the class of all quotients of finite-index subgroups of $G$. Thus, \citep[Lemma~2.5]{Guo_Tointon} implies the following result:
\begin{lem} \label{uniform_D_alpha_passes_finte_index_subgroup}
Suppose $G $ has u.c.\((\alpha)\). Then every $H \in \mathcal{C}(G)$ also has u.c.\((\alpha)\).
\end{lem}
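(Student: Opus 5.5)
The statement combines two facts: u.c.$(\alpha)$ passes to quotients, which is immediate, and u.c.$(\alpha)$ passes to finite-index subgroups, which is \citep[Lemma~2.5]{Guo_Tointon}. Every $H \in \mathcal{C}(G)$ has the form $H = K/M$ with $K \le G$ of finite index and $M \trianglelefteq K$. So I will first pass from $G$ to $K$, then from $K$ to $K/M$.

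\textbf{Quotient step.} Suppose a group $K$, generated by a finite set $T$, has u.c.$(\alpha)$ with constant $\varepsilon$, and let $M \trianglelefteq K$. Let $\bar T$ be the image of $T$ in $K/M$. Finite-index subgroups $\bar L \le K/M$ correspond bijectively to finite-index subgroups $L \le K$ containing $M$. Under this correspondence $[K/M:\bar L] = [K:L]$, and the coset spaces $(K/M)/\bar L$ and $K/L$ are identified as $T$-sets, with $\bar T$ acting through $T$. Their Schreier graphs therefore coincide, so
\[
\operatorname{diam}_{\bar T}\bigl((K/M)/\bar L\bigr) = \operatorname{diam}_T(K/L) \ge \varepsilon [K:L]^\alpha = \varepsilon [K/M:\bar L]^\alpha .
\]
By \cref{lem:indep.gen.set}, the choice of generating set for $K/M$ does not matter. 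One caveat: the definition in this paper assumes the group is infinite and admits a filtration. If $K/M$ is finite, I will read the property vacuously, or note that it holds trivially by shrinking $\varepsilon$.

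\textbf{Finite-index step.} Let $K \le G$ have index $m$. Then $K$ is finitely generated (Schreier), and any finite-index $L \le K$ is finite index in $G$ with $[G:L] = m[K:L]$. The substantive point, which is the content of \citep[Lemma~2.5]{Guo_Tointon}, is comparing $\operatorname{diam}(K/L)$ with $\operatorname{diam}(G/L)$. Choose a transversal $R$ for $K$ in $G$ and Schreier generators for $K$ built from $S$ and $R$. A path of length $\ell$ in $G/L$ from $L$ to $gL$ with $g \in K$ can then be rewritten as a path of length at most $\ell$ in the Schreier generators, giving $\operatorname{diam}(K/L) \ge c\operatorname{diam}(G/L) - C$ for constants $c, C$ depending on $m$. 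Combining with u.c.$(\alpha)$ for $G$,
\[
\operatorname{diam}(K/L) \ge c\,\varepsilon\, m^\alpha [K:L]^\alpha - C .
\]

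\textbf{Expected obstacle.} This estimate is only useful once $[K:L]$ is large. For the boundedly many small values of $[K:L]$, I will use that $\operatorname{diam}(K/L) \ge 1$ whenever $[K:L] \ge 2$, and adjust $\varepsilon$ to cover these cases. Since this is exactly the cited lemma, I will invoke it directly rather than reprove it. Composing the two steps then gives u.c.$(\alpha)$ for every $H \in \mathcal{C}(G)$.
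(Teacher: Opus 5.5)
Your proposal matches the paper's argument: u.c.$(\alpha)$ passes to quotients via the correspondence theorem (equal indices, identical Schreier graphs), and it passes to finite-index subgroups by \citep[Lemma~2.5]{Guo_Tointon}; you compose these exactly as the paper does. One correction to your heuristic sketch of the cited lemma: Reidemeister--Schreier rewriting gives $|k|_T \le |k|_S$, which bounds $\operatorname{diam}_T(K/L)$ from \textit{above}, whereas the lower bound you need comes from the reverse estimate $\operatorname{diam}_S(G/L) \le C_1\operatorname{diam}_T(K/L) + C_2$, which holds because each element of $T$ has bounded $S$-length and the transversal is finite.
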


\subsection{Semidirect product}
Let $K$ and $H$ be groups, and let $K \rtimes H$ denote the semidirect product of $K$ by $H$ with respect to an action of $H$ on $K$. We write $h \cdot k$ for the image of $k \in K$ under the action of $h \in H$. In the case where the action is given by conjugation, this coincides with $h \cdot k = h k h^{-1} = k^h$. The multiplication in $K \rtimes H$ is given by
\[
(k_1, h_1) \cdot (k_2, h_2) = \big(k_1 \, (h_1 \cdot k_2), \, h_1 h_2\big).
\]
By default, we identify elements $k \in K$ and $h \in H$ with $(k,1)$ and $(1,h)$, respectively.

\subsection{ Miscellaneous Results}

We record the following standard closure property of the class \( \mathcal{C}(G) \).

\begin{thm} \label{finite_index_quotient}
Let $G$ and $H$ be groups. Suppose there exists a finite sequence $G_0, G_1, \dots, G_n$ such that $G_0 = G$, $G_n \cong H$, and for each $0 \le i < n$, $G_{i+1}$ is either a quotient or a finite-index subgroup of $G_i$. Then $H$ is a quotient of some finite-index subgroup $K \le G$.
\end{thm}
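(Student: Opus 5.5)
We argue by induction on $n$, with the claim strengthened to the following: for each $i$ there is a finite-index subgroup $K_i \le G$ and a surjection $\pi_i\colon K_i \to G_i$. The base case $i=0$ is $K_0 = G$ with $\pi_0$ the identity. At the end, composing $\pi_n$ with the isomorphism $G_n \cong H$ shows that $H$ is a quotient of the finite-index subgroup $K_n \le G$.

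For the inductive step, suppose $\pi_i\colon K_i \to G_i$ is surjective with $[G:K_i] < \infty$. There are two cases.

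\textbf{Case 1: $G_{i+1}$ is a quotient of $G_i$.} Let $q\colon G_i \to G_{i+1}$ be the quotient map. Set $K_{i+1} = K_i$ and $\pi_{i+1} = q \circ \pi_i$. This map is a composition of surjections, hence surjective.

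\textbf{Case 2: $G_{i+1}$ is a finite-index subgroup of $G_i$.} Set $K_{i+1} = \pi_i^{-1}(G_{i+1})$ and let $\pi_{i+1}$ be the restriction of $\pi_i$ to $K_{i+1}$.
\begin{itemize}
\item $\pi_{i+1}$ maps onto $G_{i+1}$, because $\pi_i$ is surjective.
\item $\pi_i$ induces a bijection between the coset space $K_i/K_{i+1}$ and $G_i/G_{i+1}$, since $K_{i+1}$ is the full preimage of $G_{i+1}$ and so contains $\ker \pi_i$. Hence $[K_i:K_{i+1}] = [G_i:G_{i+1}] < \infty$.
\item By multiplicativity of the index, $[G:K_{i+1}] = [G:K_i]\,[K_i:K_{i+1}] < \infty$.
\end{itemize}

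None of these steps is a real obstacle. The only point needing care is the index equality in Case 2, which holds precisely because we take the full preimage, so that it contains the kernel.
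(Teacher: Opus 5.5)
Your proof is correct and follows the paper's argument essentially verbatim: an induction that keeps a surjection from a finite-index subgroup of $G$ onto $G_i$, composing with the quotient map in one case and passing to the full preimage in the other. Your explicit bijection between $K_i/K_{i+1}$ and $G_i/G_{i+1}$, together with multiplicativity of the index, just spells out the step the paper states in one line, namely that $[G:K_{i+1}]<\infty$.
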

\begin{proof}
We argue by induction on $n$. The base case $n=1$ is clear. For the inductive step, assume the statement holds for $n$. Then there exists a finite-index subgroup $K \le G$ and a surjection $\pi: K \twoheadrightarrow G_n$. If $G_{n+1}$ is a quotient of $G_n$, say $\psi: G_n \twoheadrightarrow G_{n+1}$, then $\psi \circ \pi: K \twoheadrightarrow G_{n+1}$, i.e. $G_{n+1}$ is a quotient of $K$. If $G_{n+1} \le G_n$ with $[G_n : G_{n+1}] < \infty$, set $K' = \pi^{-1}(G_{n+1})$. Then $[G : K'] < \infty$ since the preimage of a finite-index subgroup under a group homomorphism has finite index. Moreover, $\pi(K') = G_{n+1}$, so $G_{n+1}$ is a quotient of $K'$. Thus, in all cases, $G_{n+1}$ is a quotient of a finite-index subgroup of $G$, completing the induction.
\end{proof}

\begin{lem}
    Let $G$ be an abelian-by-nilpotent group. If $H \le G$ is a subgroup, then $H$ is also abelian-by-nilpotent.
\end{lem}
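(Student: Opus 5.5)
Since $G$ is abelian-by-nilpotent, there is a normal subgroup $A \trianglelefteq G$ with $A$ abelian and $G/A$ nilpotent. The plan is to intersect this decomposition with $H$, taking $A \cap H$ as the abelian normal subgroup of $H$.

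First, $A \cap H$ is normal in $H$. Indeed, for $h \in H$ and $x \in A \cap H$, the conjugate $hxh^{-1}$ lies in $A$ because $A \trianglelefteq G$, and it lies in $H$ because $H$ is a subgroup. Moreover $A \cap H$ is abelian, being a subgroup of the abelian group $A$.

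Next, the quotient $H/(A\cap H)$ is nilpotent. By the second isomorphism theorem,
\[
H/(A\cap H) \cong HA/A \le G/A.
\]
Subgroups of nilpotent groups are nilpotent: if $\gamma_{c+1}(G/A)=1$, then $\gamma_{c+1}(HA/A) \le \gamma_{c+1}(G/A) = 1$, since lower central series terms are monotone under inclusion. Hence $H/(A\cap H)$ is nilpotent of class at most $c$.

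Combining the two steps, $H$ is an extension of the abelian group $A\cap H$ by a nilpotent group, so $H$ is abelian-by-nilpotent. There is no real obstacle; the only point needing care is the monotonicity of the lower central series, which follows directly from its definition via iterated commutator subgroups.
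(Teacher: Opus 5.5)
Your proof is correct and follows the paper's argument exactly: you take $A \cap H$ as the abelian normal subgroup of $H$, then apply the second isomorphism theorem to identify $H/(A\cap H)$ with the subgroup $HA/A$ of the nilpotent group $G/A$. Your explicit checks that $A \cap H$ is normal in $H$, and that subgroups of nilpotent groups are nilpotent because the lower central series is monotone, fill in steps the paper states without proof.
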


\begin{proof}
By definition, there exists $A \trianglelefteq G$ such that $A$ is abelian and $G/A$ is nilpotent. Let $H \le G$ and set $K = H \cap A$. Then $K$ is abelian and normal in $H$. By the Second Isomorphism Theorem,
\[
H/(H \cap A) \cong HA/A \le G/A.
\]
Since $G/A$ is nilpotent and nilpotency is inherited by subgroups, it follows that $H/K$ is nilpotent. Hence $H$ is abelian-by-nilpotent.
\end{proof}
We immediately have the following:
\begin{cor} \label{class_closure_abelian_by_nilpotent}
Let $G$ be an abelian-by-nilpotent group. Then every group in $\mathcal{C}(G)$ is also abelian-by-nilpotent.
\end{cor}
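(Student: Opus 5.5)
The statement follows from two closure facts: being abelian-by-nilpotent passes to subgroups, by the preceding lemma, and it also passes to quotients. Every member of $\mathcal{C}(G)$ has the form $K/M$, where $K \le G$ has finite index and $M \trianglelefteq K$. So it suffices to show that $K$ is abelian-by-nilpotent and that this property descends from $K$ to $K/M$. The finite-index hypothesis on $K$ is never used, so the argument in fact applies to quotients of arbitrary subgroups.

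First, the preceding lemma gives an abelian normal subgroup $B \trianglelefteq K$ with $K/B$ nilpotent; concretely, $B = K \cap A$.

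Next, let $\pi\colon K \to K/M$ be the quotient map and set $\bar B = \pi(B) = BM/M$. This subgroup is abelian, being a homomorphic image of the abelian group $B$. It is normal in $K/M$ because $B$ is normal in $K$ and $\pi$ is surjective. By the Third Isomorphism Theorem,
\[
(K/M)/(BM/M) \cong K/BM,
\]
and $K/BM$ is a quotient of the nilpotent group $K/B$. Since a quotient of a nilpotent group is nilpotent (the lower central series maps onto the lower central series of the quotient), $K/BM$ is nilpotent. Hence $K/M$ is abelian-by-nilpotent.

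There is no real obstacle; the only point needing care is that normality of $\bar B$ uses the surjectivity of $\pi$. One could alternatively phrase the argument via \cref{finite_index_quotient}, but the direct description of $\mathcal{C}(G)$ as quotients $K/M$ already suffices.
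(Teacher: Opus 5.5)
Your proof is correct and follows essentially the paper's own route. The paper deduces the corollary "immediately" from the preceding subgroup-closure lemma, leaving closure under quotients implicit; you fill that in by checking that $BM/M$ is an abelian normal subgroup of $K/M$ and that $K/BM$ is a quotient of the nilpotent group $K/B$.
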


\section{Finite subgroup rank }

We begin by investigating the u.c.\ property in finitely generated soluble groups with finite subgroup rank. An important class of such groups is the class of polycyclic groups, where the u.c.\ property was studied in preceding work \citep{Guo_Tointon}. On the one hand, there are similarities in the approach, as we develop tools from linear algebra and number theory before applying the strengthened version of Gromov’s Theorem by Breuillard, Green, and Tao \citep{bgt}. Indeed, while the study of polycyclic groups involves understanding general linear matrices with integer entries, the study of finitely generated soluble groups with finite subgroup rank will involve rational entries instead. On the other hand, we encounter a distinct situation due to the existence of infinitely generated subgroups. For example, the polycyclic case reduces to the abelian-by-cyclic case (see \citep[Lemma 8.21 ]{Guo_Tointon}), this reduction is not generally possible for finitely generated soluble groups with finite subgroup rank (see the proof of \cref{char_K_Z_m}).

We will  study finitely generated groups of the form $G = A \rtimes \mathbb{Z}^m$, where $A$ is an abelian group of finite rank. Since $\operatorname{Tor}(A)$ is a finite characteristic subgroup of $G$, we may assume without loss of generality that $A$ is torsion-free. For convenience, we define the class of groups $\mathcal{A}_{\mathrm{f.r.}}(n)$ as follows: $A \in \mathcal{A}_{\mathrm{f.r.}}(n)$ if $A$ is a torsion-free abelian group of finite rank $n$ such that it can be extended to a finitely generated group $G = A \rtimes \Gamma$, where $\Gamma$ is polycyclic.

\subsection{Classifying the groups in $\mathcal{A}_{\text{f.r.}}(n)$}

Given a torsion-free abelian group $A$, there is a standard way to view $A$ as a subgroup of a $\mathbb{Q}$-vector space. Define
\[
V := A \otimes_{\mathbb{Z}} \mathbb{Q},
\]
the tensor product of the abelian group $A$ (viewed as a $\mathbb{Z}$-module) with the field of rational numbers $\mathbb{Q}$. This construction is known as the \emph{rationalisation} of $A$. Since $A$ is torsion-free, the natural homomorphism
\[
\iota \colon A \longrightarrow V, \qquad a \longmapsto a \otimes 1,
\]
is injective. Thus, we may identify $A$ with a subgroup of the $\mathbb{Q}$-vector space $V$. The following standard result records several equivalent notions of rank for a torsion-free abelian group.

\begin{prop}[Equivalent definitions of rank]
Let $A$ be a torsion-free abelian group. The following are equivalent and define the
\emph{rank} of $A$:
\begin{enumerate}
  \item The cardinality of a maximal $\mathbb{Z}$-linearly independent subset of $A$ (equivalently,  the    subgroup rank of $A$.).
  \item The largest   $n \in \N \cup \{\infty\}$ such that $A$ contains a subgroup
        isomorphic to $\mathbb{Z}^n$.
  \item The dimension of the $\mathbb{Q}$-vector space $A \otimes_{\mathbb{Z}} \mathbb{Q}$.
  \item The least integer $r$ such that every finitely generated subgroup of $A$
        can be generated by at most $r$ elements.
\end{enumerate}
\end{prop}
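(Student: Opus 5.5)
The natural approach is to prove the cycle of equivalences (1) $\Leftrightarrow$ (3), (2) $\Leftrightarrow$ (3), (1) $\Leftrightarrow$ (4), with everything passing through the $\mathbb{Q}$-vector space $V = A\otimes_{\mathbb{Z}}\mathbb{Q}$. We use the embedding $\iota\colon A\to V$, which is injective because $A$ is torsion-free. The key observation is that every element $v\in V$ can be written as $\iota(a)/m$ for some $a\in A$ and nonzero integer $m$, since a finite sum $\sum a_i\otimes q_i$ can be put over a common denominator. Consequently $\iota(A)$ spans $V$ over $\mathbb{Q}$.

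For (1) $\Leftrightarrow$ (3), we check that a subset of $A$ is $\mathbb{Z}$-linearly independent if and only if its image in $V$ is $\mathbb{Q}$-linearly independent. A $\mathbb{Q}$-relation becomes a $\mathbb{Z}$-relation after clearing denominators, and $\iota$ is injective. A maximal $\mathbb{Z}$-independent subset of $A$ then maps to a maximal independent subset of the spanning set $\iota(A)$. Such a subset is a $\mathbb{Q}$-basis of $V$, so its cardinality equals $\dim_{\mathbb{Q}} V$.

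For (2), note that a subgroup isomorphic to $\mathbb{Z}^n$ is exactly the span of $n$ $\mathbb{Z}$-independent elements. Hence the largest such $n$ is the maximal size of an independent set, which is the quantity in (1).

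For (4), let $n=\dim_{\mathbb{Q}} V$, which may be infinite. If $B\le A$ is finitely generated, then $B$ is torsion-free and finitely generated, hence $B\cong\mathbb{Z}^k$. By the argument above, $k=\dim_{\mathbb{Q}}(B\otimes\mathbb{Q})\le n$, because $B\otimes\mathbb{Q}$ injects into $V$ by flatness of $\mathbb{Q}$. So $B$ is generated by at most $n$ elements. Conversely, a subgroup $\mathbb{Z}^n\le A$ cannot be generated by fewer than $n$ elements, since $\mathbb{Z}^n/2\mathbb{Z}^n\cong\mathbb{F}_2^n$. Hence the least admissible $r$ is exactly $n$. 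In the infinite case there are subgroups $\mathbb{Z}^k$ for every $k$, so no finite $r$ works.

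The only step needing care is the finitely generated subgroup in (4). Here I would use the structure theorem for finitely generated abelian groups rather than any counting. I would also note that the parenthetical "subgroup rank" in (1) is simply another name for (4), so it needs no separate argument. The rest is routine linear algebra.
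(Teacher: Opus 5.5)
Your proof is correct. Working through $V=A\otimes_{\mathbb{Z}}\mathbb{Q}$ settles (1)--(3): a subset of $A$ is $\mathbb{Z}$-independent exactly when its image is $\mathbb{Q}$-independent, and $\iota(A)$ spans $V$. The structure theorem together with the count $\mathbb{Z}^n/2\mathbb{Z}^n\cong\mathbb{F}_2^n$ settles (4), including the infinite case.

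The paper gives no proof of this proposition and cites it as standard, so there is no argument to compare against. Your (1)$\Leftrightarrow$(3) step is precisely the fact the paper uses without proof in \cref{auto_of_torsion_free_finite_rank}: a maximal $\mathbb{Z}$-independent subset of $A$ is a $\mathbb{Q}$-basis of $V$.

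Your argument relies on two conventions you leave implicit, and both are correct. First, a maximal independent subset exists, by Zorn's lemma, or by a greedy construction since every group in the paper is countable. Second, $\mathbb{Z}^{\infty}$ is read as the countable direct sum $\bigoplus_{\mathbb{N}}\mathbb{Z}$, which matches how the paper uses $\mathbb{Z}^{\mathbb{N}}$.
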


By embedding $A$ into a vector space, we can immediately see that, after fixing a maximal $\mathbb{Z}$-linearly independent set of $A$, every automorphism of $A$ can be represented by a matrix $Q \in \mathrm{GL}(n,\mathbb{Q})$.

\begin{lem}[Automorphisms of finite-rank torsion-free abelian groups]  \label{auto_of_torsion_free_finite_rank}
Let $A$ be a torsion-free abelian group of rank $n$, and let $\{e_1, \dots, e_n\}$ be a maximal $\mathbb{Z}$-linearly independent subset of $A$. Then, via the natural embedding 
\[
A \hookrightarrow A \otimes_{\mathbb{Z}} \mathbb{Q},
\]
the set $\{e_1, \dots, e_n\}$ forms a $\mathbb{Q}$-basis for the vector space $V := A \otimes_{\mathbb{Z}} \mathbb{Q}$. Consequently, every automorphism of $A$ extends uniquely to a $\mathbb{Q}$-linear automorphism of $V$ and can thus be represented by a matrix $M \in \mathrm{GL}_n(\mathbb{Q})$ with respect to this basis.
\end{lem}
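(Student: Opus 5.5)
The plan is to prove the two assertions in turn: first that $\{e_1,\dots,e_n\}$ is a $\mathbb{Q}$-basis of $V$, then that every automorphism of $A$ extends uniquely to $V$.

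\textbf{The basis.} For linear independence over $\mathbb{Q}$, take a relation $\sum_i q_i (e_i\otimes 1)=0$ with $q_i\in\mathbb{Q}$. Clear denominators by a common integer $d>0$ to get integers $c_i=dq_i$ with $(\sum c_i e_i)\otimes 1 = 0$. Since $A$ is torsion-free, $\iota$ is injective, so $\sum c_i e_i=0$ in $A$. Because the $e_i$ are $\mathbb{Z}$-linearly independent, all $c_i=0$, hence all $q_i=0$.

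For spanning, every element of $V$ is a finite sum $\sum a_j\otimes q_j$, and each term equals $(a_j\otimes 1)q_j$. It therefore suffices to show each $a\otimes 1$ lies in the $\mathbb{Q}$-span of the $e_i\otimes 1$. By maximality, $\{e_1,\dots,e_n,a\}$ is $\mathbb{Z}$-dependent. This gives a relation $ma=\sum c_i e_i$ with $m\neq 0$; the coefficient of $a$ is nonzero, since otherwise the $e_i$ would be dependent. Then $a\otimes 1 = \sum (c_i/m)(e_i\otimes 1)$.

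\textbf{Existence of the extension.} Let $\phi\in\operatorname{Aut}(A)$. By functoriality of $-\otimes_{\mathbb{Z}}\mathbb{Q}$, the map $\phi\otimes\mathrm{id}_{\mathbb{Q}}$ is $\mathbb{Q}$-linear and satisfies $(\phi\otimes \mathrm{id})\circ\iota=\iota\circ\phi$. Its inverse is $\phi^{-1}\otimes\mathrm{id}$, so it is an automorphism of $V$.

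\textbf{Uniqueness of the extension.} A $\mathbb{Q}$-linear map on $V$ is determined by its values on the basis $\{e_i\otimes 1\}$. Any extension must send $e_i\otimes 1$ to $\phi(e_i)\otimes 1$, so there is only one.

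\textbf{The matrix.} Writing $\phi(e_i)\otimes 1$ in the basis gives a matrix $M\in\mathrm{GL}_n(\mathbb{Q})$. It is invertible because the extension of $\phi^{-1}$ supplies the inverse matrix.

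\textbf{Main obstacle.} The only delicate point is the appeal to injectivity of $\iota$, which is exactly where torsion-freeness is used. If needed, this can be justified by flatness of $\mathbb{Q}$ over $\mathbb{Z}$. Alternatively, one can realise $A\otimes\mathbb{Q}$ as the localisation $S^{-1}A$ with $S=\mathbb{Z}\setminus\{0\}$, whose kernel from $A$ is the torsion subgroup.
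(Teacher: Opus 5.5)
Your proposal is correct and follows the same route as the paper: extend $\phi$ to $\Phi=\phi\otimes\mathrm{id}_{\mathbb{Q}}$, observe that $\phi^{-1}\otimes\mathrm{id}_{\mathbb{Q}}$ is its inverse, and read off a matrix in $\mathrm{GL}_n(\mathbb{Q})$. You are in fact more complete than the paper, whose proof takes for granted both that $\{e_1,\dots,e_n\}$ is a $\mathbb{Q}$-basis of $V$ and that the extension is unique; you prove both, obtaining independence from injectivity of $\iota$, spanning from maximality, and uniqueness from the basis.
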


\begin{proof}
Let $\phi \in \operatorname{Aut}(A)$. Define a map 
\[
\Phi \colon V \to V, \quad \Phi(a \otimes q) := \phi(a) \otimes q \quad \text{for all } a \in A, \, q \in \mathbb{Q}.
\]
Since $\phi$ is a group homomorphism, $\Phi$ is a well-defined $\mathbb{Q}$-linear transformation. Moreover, as $\phi$ is an automorphism, it possesses an inverse $\phi^{-1}$; the map induced by $\phi^{-1}$ serves as the inverse to $\Phi$. Thus, $\Phi \in \operatorname{GL}(V)$.

By fixing the basis $\{e_1, \dots, e_n\}$ of $V$, the linear transformation $\Phi$ is uniquely represented by a matrix $M \in \operatorname{GL}_n(\mathbb{Q})$, where the $j$-th column of $M$ consists of the coordinates of $\Phi(e_j)$ with respect to this basis. Since $\phi$ is the restriction of $\Phi$ to $A$, it follows that $\phi$ is represented by the rational matrix $M$.
\end{proof}

\begin{defn}
A group $G$ is said to have the \emph{bounded generation property} if there exists a finite subset $\{t_1, \ldots, t_m\} \subset G$ such that every $g \in G$ can be written as
\[
g = t_1^{k_1} t_2^{k_2} \cdots t_m^{k_m},
\]
where $k_1, k_2, \ldots, k_m \in \mathbb{Z}$.
\end{defn}

A typical example of a group with the bounded generation property is a polycyclic group.

\begin{thm}[Matrix representation of groups in $\mathcal{A}_{\mathrm{f.r.}}(n)$] \label{thm:matrix_rep}
Let $G = A \rtimes \Gamma$ be a finitely generated group where $A$ is a torsion-free abelian group of rank $n$ and $\Gamma$ is a polycyclic group boundedly generated by $\{x_1, \dots, x_m\}$. Then there exist matrices $M_1, \dots, M_m \in \mathrm{GL}_n(\mathbb{Q})$ such that
\[
A \cong \mathcal{M} := \left\langle M_1^{k_1} \cdots M_m^{k_m} v \;\middle|\; k_i \in \mathbb{Z}, \, v \in \mathbb{Z}^n \right\rangle \subseteq \mathbb{Q}^n,
\]
and the action of $\Gamma$ on $\mathcal{M}$ is given by $x_i \cdot w = M_i w$. In the particular case where $\Gamma \cong \mathbb{Z}^m$, the matrices $M_1, \dots, M_m$ pairwise commute.
\end{thm}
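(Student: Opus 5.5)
We prove the statement in two steps: first represent $\Gamma$ by rational matrices via \cref{auto_of_torsion_free_finite_rank}, then use finite generation of $G$ together with bounded generation of $\Gamma$ to see that $A$ is exactly the $\Gamma$-orbit span of a lattice.

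\textbf{Setting up the embedding.} Since $G=A\rtimes\Gamma$ is finitely generated, we may take a finite generating set of the form $\{a_1,\dots,a_s\}\cup\{x_1,\dots,x_m\}$ with $a_j\in A$. Indeed, writing each generator of $G$ as $(a,\gamma)$ and splitting it as $a\cdot\gamma$, the $A$-components together with generators of $\Gamma$ (and we may take the $x_i$ to generate $\Gamma$) still generate $G$. Because $G$ is generated by these elements, $A$ is the normal closure of $\{a_j\}$ under the action of $\Gamma$. Hence $A$ is the $\mathbb{Z}\Gamma$-submodule generated by the $a_j$, that is, $A=\langle \gamma\cdot a_j : \gamma\in\Gamma,\ j\rangle$.

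Next, we choose a maximal $\mathbb{Z}$-linearly independent subset $\{e_1,\dots,e_n\}$ of $A$ that can be taken to lie inside the finite set $\{a_j\}$ together with some $\Gamma$-translates. A cleaner choice is to enlarge the finite set: pick a maximal independent subset of $A$, add it to the $a_j$, and note that the subgroup generated by this finite set is a finitely generated torsion-free group $L$ of rank $n$, so $L\cong\mathbb{Z}^n$. Choosing a $\mathbb{Z}$-basis $e_1,\dots,e_n$ of $L$, it is maximal independent in $A$, hence a $\mathbb{Q}$-basis of $V=A\otimes\mathbb{Q}$. Identifying $V$ with $\mathbb{Q}^n$ sends $L$ to $\mathbb{Z}^n$ and embeds $A$ in $\mathbb{Q}^n$.

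\textbf{Matrices and the orbit description.} By \cref{auto_of_torsion_free_finite_rank}, the automorphism of $A$ given by $x_i$ extends to $V$ and is represented by some $M_i\in\mathrm{GL}_n(\mathbb{Q})$, so $x_i\cdot w=M_iw$. The map $\Gamma\to\mathrm{GL}(V)$ is a homomorphism, since each extension is unique. Bounded generation lets us write every $\gamma\in\Gamma$ as $x_1^{k_1}\cdots x_m^{k_m}$, so $\gamma$ acts as $M_1^{k_1}\cdots M_m^{k_m}$. Therefore
\[
A=\langle \gamma\cdot \ell:\gamma\in\Gamma,\ \ell\in L\rangle=\langle M_1^{k_1}\cdots M_m^{k_m}v: k_i\in\mathbb{Z},\ v\in\mathbb{Z}^n\rangle=\mathcal{M}.
\]
The inclusion $\supseteq$ holds because $L\subseteq A$ and $A$ is $\Gamma$-invariant. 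The inclusion $\subseteq$ holds because each $a_j\in L$ and $A$ is generated by the $\Gamma$-translates of the $a_j$.

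\textbf{Commuting case.} When $\Gamma\cong\mathbb{Z}^m$ we take the $x_i$ to be the standard basis, which boundedly generates $\Gamma$. The $x_i$ commute, so their images $M_i$ commute, since the representation is a homomorphism.

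The only delicate point is ensuring that the generators $x_i$ used for bounded generation also generate $\Gamma$ for the normal-closure argument. This is automatic, because bounded generation by $\{x_i\}$ implies that they generate $\Gamma$.
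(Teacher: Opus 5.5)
Your proof is correct and follows essentially the same route as the paper: you realise $A$ inside $A\otimes_{\mathbb{Z}}\mathbb{Q}\cong\mathbb{Q}^n$ using a basis of a full-rank lattice that contains a finite set whose $\Gamma$-translates generate $A$, represent the $x_i$ by rational matrices, and use bounded generation to identify $A$ with $\mathcal{M}$. Your preliminary steps are a little more direct than the paper's: you get the finite normal generating set from the splitting rather than from finite presentability of $\Gamma$, and the lattice by adjoining a maximal independent set rather than by rank-stabilisation of the $H_k$, which also makes the paper's separate injectivity check for $\varphi$ unnecessary.
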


\begin{proof}
Since $\Gamma$ is finitely presented, \citep[Lemma~7.30]{ggt} implies that there exists a finite set $Y \subseteq A$ such that every element of $A$ can be expressed as a product of conjugates of elements of $Y$ by elements of $\Gamma$. As $\Gamma$ is boundedly generated, these conjugates may be written in the form $\mathbf{x}^{\mathbf{p}} \cdot y$, where $y \in Y$, $\mathbf{p} \in \mathbb{Z}^m$, and $\mathbf{x}^{\mathbf{p}} = x_1^{p_1} \cdots x_m^{p_m}$.

Let $H_k$ be the subgroup of $A$ generated by all elements of the form $\mathbf{x}^{\mathbf{p}} \cdot y$ with $|p_i| \le k$. Then $(H_k)$ is an increasing sequence of subgroups whose ranks are bounded above by $n$, so the ranks stabilise and eventually equal $n$. Let $H = H_\kappa$ be such a subgroup. Then $H$ is a finitely generated torsion-free abelian group of rank $n$, so we may choose a basis $\{r_1, \dots, r_n\}$ with $H = \langle r_1, \dots, r_n \rangle \cong \mathbb{Z}^n$.

Each automorphism $a \mapsto x_i \cdot a$ of $A$ extends uniquely to a $\mathbb{Q}$-linear automorphism of $V := A \otimes_\mathbb{Z} \mathbb{Q}$, and hence is represented by a matrix $M_i \in \mathrm{GL}_n(\mathbb{Q})$ with respect to the chosen basis by \cref{auto_of_torsion_free_finite_rank}.

Define
\[
\mathcal{M} = \left\langle M_1^{k_1} \cdots M_m^{k_m} e_j \;\middle|\; 1 \le j \le n,\; k_i \in \mathbb{Z} \right\rangle \subseteq \mathbb{Q}^n,
\]
where $\{e_1, \dots, e_n\}$ is the standard basis. Then $\Gamma$ acts on $\mathcal{M}$ via $x_i \cdot w = M_i w$.

Define $\varphi : A \to \mathcal{M}$ by sending $r_j \mapsto e_j$ and extending equivariantly with respect to the action of $\Gamma$. Explicitly, for $a = \mathbf{x}^{\mathbf{k}} \cdot r_j$, set
\[
\varphi(a) = M_1^{k_1} \cdots M_m^{k_m} e_j.
\]
Then $\varphi$ is a surjective homomorphism.

To see that $\varphi$ is injective, note that since $\operatorname{rank}(H) = \operatorname{rank}(A)$, for every $a \in A$ there exists $n_a \in \mathbb{N}$ such that $n_a a \in H$. If $\varphi(b)=0$, then $\varphi(n_b b)=0$, and since $\varphi|_H$ is injective, we deduce $n_b b=0$. As $A$ is torsion-free, this implies $b=0$. Hence $\varphi$ is an isomorphism, as required.
\end{proof}

We will be using \cref{thm:matrix_rep} in the case \( \Gamma \cong \mathbb{Z}^m \). Let \( M_1, \dots, M_m \in \mathrm{GL}_n(\mathbb{Q}) \) be commuting matrices, and define
\begin{equation*}
K(M_1, \dots, M_m)
:= \left\langle M_1^{k_1} \cdots M_m^{k_m} v \;\middle|\; k_i \in \mathbb{Z},\ v \in \mathbb{Z}^n \right\rangle
\subseteq \mathbb{Q}^n.
\end{equation*}

We form the semidirect product
\[
G = K(M_1, \dots, M_m) \rtimes \mathbb{Z}^m
= K(M_1, \dots, M_m) \rtimes \langle t_1, \dots, t_m \rangle,
\]
where \( \mathbb{Z}^m \) acts via \( t_i v t_i^{-1} = M_i v \) for all \( v \in K(M_1, \dots, M_m) \) and \( i = 1, \dots, m \).

Let \( D \) be a set of primes. We define the ring of \( D \)-integers by
\[
\mathbb{Z}_D
=
\left\{
\frac{a}{b} \in \mathbb{Q}
\;\middle|\;
a \in \mathbb{Z},\ b \in \mathbb{Z}_{>0},\ \text{all prime factors of } b \text{ lie in } D
\right\}.
\]

It is clear that there exists a finite set of primes \( D \), such that each entry of every \( M_i \) has denominator with factors only in \( D \), i.e. lies in \( \mathbb{Z}_D \). When we want to emphasise the possible prime divisors of the denominators of the matrices, we may write \( M_i \in \mathrm{GL}(n,\mathbb{Z}_D) \). It is clear that \( K(M_1, \dots, M_m) \subseteq \mathbb{Z}_D^n \).

The following result tells us when $ K(M_1, \dots, M_m) \rtimes \mathbb{Z}^m$ is actually a polycyclic group.

\begin{prop}[Criteria for Finite Generation of $\mathcal{M}$] \label{Finite_generation_criteria}
Let $M_1, \dots, M_m \in \mathrm{Mat}_{n \times n}(\mathbb{Q})$ be pairwise commuting matrices, and suppose that the characteristic polynomial of each $M_i$ has integer coefficients. Then:

\begin{enumerate}[(i)]
    \item For each $i$, the sequence $\{M_i^k\}_{k \ge 0}$ has bounded denominators.  
    \item If, in addition, the characteristic polynomial of each $M_i^{-1}$ also has integer coefficients, then the group
    \[
    \mathcal{M} = \Big\langle \left( \prod_{i=1}^m M_i^{k_i} \right) v \;\Big|\; k_i \in \mathbb{Z},\ v \in \mathbb{Z}^n \Big\rangle \subseteq \mathbb{Q}^n
    \]
    is finitely generated as an abelian group.
\end{enumerate}
\end{prop}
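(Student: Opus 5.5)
The plan is to use the Cayley--Hamilton theorem to show that every power of each $M_i$ lies in the $\mathbb{Z}$-span of a fixed finite set of matrices. Both parts then follow at once from the fact that subgroups of finitely generated abelian groups are finitely generated.

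For (i), fix $i$. Write the characteristic polynomial of $M_i$ as $\chi_i(x) = x^n + c_{n-1}x^{n-1} + \dots + c_0$ with $c_j \in \mathbb{Z}$. By Cayley--Hamilton,
\[
M_i^n = -\left(c_{n-1}M_i^{n-1} + \dots + c_1 M_i + c_0 I\right).
\]
Let $\Lambda_i := \mathbb{Z} I + \mathbb{Z} M_i + \dots + \mathbb{Z} M_i^{n-1} \subseteq \mathrm{Mat}_{n\times n}(\mathbb{Q})$. The displayed identity shows that $M_i \Lambda_i \subseteq \Lambda_i$. By induction on $k$, this gives $M_i^k \in \Lambda_i$ for all $k \ge 0$. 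Every entry of a matrix in $\Lambda_i$ is an integer combination of entries of $I, M_i, \dots, M_i^{n-1}$. Hence, if $d_i$ is the least common multiple of the denominators of all entries of these $n$ matrices, then $d_i M_i^k$ has integer entries for every $k \ge 0$. This is the required uniform bound on denominators.

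For (ii), apply the same argument to $M_i^{-1}$, whose characteristic polynomial is now assumed to be monic with integer coefficients. This gives $M_i^{-k} \in \Lambda_i' := \sum_{j=0}^{n-1} \mathbb{Z} M_i^{-j}$ for all $k \ge 0$. Thus for every $k \in \mathbb{Z}$ we have $M_i^k \in \Lambda_i + \Lambda_i'$, a finitely generated $\mathbb{Z}$-module spanned by $F_i := \{M_i^j : |j| \le n-1\}$.

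Now expand the product $\prod_{i=1}^m M_i^{k_i}$ factor by factor, using bilinearity of matrix multiplication. It lies in the $\mathbb{Z}$-span of the finite set
\[
\mathcal{F} := \{P_1 P_2 \cdots P_m : P_i \in F_i\}.
\]
Commutativity is not even needed here, since we keep the factors in the given order. Consequently every generator $\left(\prod_i M_i^{k_i}\right)v$ of $\mathcal{M}$, with $v \in \mathbb{Z}^n$, lies in
\[
L := \sum_{P \in \mathcal{F}} P\,\mathbb{Z}^n \subseteq \mathbb{Q}^n.
\]
The group $L$ is generated by the finitely many vectors $P e_j$ with $P \in \mathcal{F}$ and $1 \le j \le n$. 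Conversely, each $P \in \mathcal{F}$ is itself of the form $\prod_i M_i^{k_i}$, so $L \subseteq \mathcal{M}$. Hence $\mathcal{M} = L$ is finitely generated. Alternatively, one can argue that $\mathcal{M}$ is a subgroup of the finitely generated abelian group $L$ and therefore finitely generated.

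There is no serious obstacle here. The only points needing care are the following. Invertibility of $M_i$ must be used so that negative powers make sense; in (ii) this is implicit, since the statement refers to $M_i^{-1}$, and one should note that the constant term $c_0$ is then $\pm 1$. We must also make sure to use the characteristic polynomial of $M_i^{-1}$, and not that of $M_i$, to control negative powers. The characteristic polynomial of $M_i$ alone would only give $M_i^{-1} = -c_0^{-1}(M_i^{n-1} + \dots + c_1 I)$, which may introduce the denominator $c_0$.
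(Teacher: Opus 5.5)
Your proof is correct and follows the paper's route: Cayley--Hamilton puts every nonnegative power of $M_i$ in the $\mathbb{Z}$-span of $I, M_i, \dots, M_i^{n-1}$, which gives bounded denominators. For (ii) you write out in full the ``similar argument'' the paper leaves implicit: you apply the same reasoning to $M_i^{-1}$ and expand the products to exhibit $\mathcal{M}$ as the span of the finitely many vectors $Pe_j$.
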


\begin{proof}
We prove (i); (ii) follows by a similar argument. Suppose that every coefficient of $\chi(M_i)$ is an integer. By the Cayley--Hamilton theorem, $M_i^n$ is an integer linear combination of $I, M_i, \dots, M_i^{n-1}$. By induction, every power $M_i^k$ for $k \ge 0$ lies in the $\mathbb{Z}$-span of $\{I, M_i, \dots, M_i^{n-1}\}$. This span is a finitely generated $\mathbb{Z}$-module, hence the sequence $\{M_i^k\}_{k \ge 0}$ has bounded denominators.
\end{proof}

\subsection{Some  number theory}
We will first define modular arithmetic for all rational numbers. Let $a, b \in \Z$, and let $p$ be a prime that does not divide $b$. We say $\frac{a}{b} \equiv_p k$ if $k b \equiv_p a$. Given a monic polynomial $P(x) \in \qx$, we define $\lcm(P)$ as the lowest common multiple of the denominators of all coefficients of $P$. Given a prime $p \nmid \lcm(P)$, we say that $P(x)$ splits over $\Fp$ if it factors as a product of linear factors over $\Fp$.  In the special case where we work with a rational matrix $Q$ and take $P(x)=\chi_Q(x)$, we may be slightly sloppy and instead require that $p \nmid \operatorname{lcm}(Q)$, without mentioning this explicitly; this condition in particular implies that $p \nmid \operatorname{lcm}(P)$.

For example, consider the rational polynomial $x^2 + \frac{2}{3}$. Note that $3^{-1} \equiv_5 2$; hence we have $x^2 + \frac{2}{3}$ splits over $\mathbb{F}_5$ as $(x+4)(x+1)$. Also,  
\[
(x+4)(x+1) = (x^2 + \tfrac{2}{3}) + \Big(5x + \tfrac{10}{3}\Big),
\]  
and the remainder $5x + \tfrac{10}{3}$ indeed vanishes over $\mathbb{F}_5$.

\begin{lem}\label{split_over_inf_primes_with_no_zero_roots}
Given any monic $P(x)$ in $\qx$ with $P(0)\ne 0$, the set of primes $p$ for which $P(x)$ splits completely over $\mathbb{F}_p$ and has no zero root has positive density. In particular, there are infinitely many such primes.
\end{lem}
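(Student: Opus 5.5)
We plan to reduce to the Chebotarev density theorem applied to the splitting field of $P$. First clear denominators: write $P(x) = x^d + c_{d-1}x^{d-1} + \dots + c_0$ with $c_i \in \mathbb{Q}$ and $c_0 = P(0) \neq 0$. Let $L = \lcm(P)$ and let $N$ be the numerator of $c_0$, so $N \neq 0$. Only finitely many primes divide $L \cdot N$, so we may discard them. For any remaining prime $p$, the reduction $\bar P \in \mathbb{F}_p[x]$ is well defined and satisfies $\bar P(0) = \bar c_0 \neq 0$. Hence $\bar P$ has no zero root automatically, and it remains to show that $\bar P$ splits completely for a set of primes of positive density.

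Factor $P = \prod_j P_j^{e_j}$ into distinct monic irreducibles over $\mathbb{Q}$. It suffices to treat the squarefree part $R = \prod_j P_j$, since $\bar P$ splits whenever $\bar R$ does. Let $K$ be the splitting field of $R$ over $\mathbb{Q}$, a finite Galois extension of degree $n$. We further discard the finitely many primes that ramify in $K$, divide the discriminant of $R$, or divide the denominators of the coefficients of $R$. By the Chebotarev density theorem (or already Frobenius' density theorem), the set of primes $p$ whose Frobenius class is trivial, that is, the primes splitting completely in $K$, has natural density $1/n > 0$.

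It then remains to check that complete splitting of $p$ in $K$ forces $\bar R$ to split into linear factors over $\mathbb{F}_p$. Let $\alpha_1,\dots,\alpha_r$ be the roots of $R$ in $K$, and choose a prime $\mathfrak{p}$ of $\mathcal{O}_K$ above $p$. Each $\alpha_i$ is $\mathfrak{p}$-integral, because $p$ does not divide the denominators; this is the only slightly delicate point, handled by scaling $R$ to a primitive integer polynomial, whose leading coefficient is a unit at $p$. So $R(x) = \prod_i (x - \alpha_i)$ reduces modulo $\mathfrak{p}$ to $\prod_i (x - \bar\alpha_i)$ in $(\mathcal{O}_K/\mathfrak{p})[x]$. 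Complete splitting gives residue field $\mathcal{O}_K/\mathfrak{p} = \mathbb{F}_p$, so $\bar R$ is a product of linear factors over $\mathbb{F}_p$. Hence $\bar P$ is as well.

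Removing finitely many primes does not change density, so the required set of primes has density at least $1/[K:\mathbb{Q}] > 0$, and in particular it is infinite. The main obstacle is simply invoking Chebotarev correctly. If a more elementary route to infinitude alone were wanted, one could instead use a Schur-type argument: take a primitive element $\theta$ of $K$ with minimal polynomial $f$, note that infinitely many primes divide some value $f(m)$, and observe that $f$ then has a root mod $p$, which by normality forces $p$ to split completely. Positive density, however, genuinely needs Chebotarev.
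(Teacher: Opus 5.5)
Your proposal is correct and takes essentially the paper's approach. The paper clears denominators, applies the Frobenius density theorem to $\lcm(P)P(x)\in\mathbb{Z}[x]$, and discards the primes dividing $\lcm(P)^2P(0)$, which is the same exclusion as yours of primes dividing $\lcm(P)$ and the numerator of $P(0)$. Your extra steps (passing to the squarefree part, then using $\mathfrak{p}$-integral roots to show that complete splitting of $p$ in the splitting field forces $\bar P$ to split over $\mathbb{F}_p$) just unpack that citation, and they also handle the repeated-factor case that the paper's one-line appeal to Frobenius glosses over.
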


\begin{proof}
It is well-known that every non-constant polynomial in $\Z[X]$ splits completely over infinitely many primes, with positive density given by the Frobenius Density Theorem (see~\citep{dasfrob} or~\citep[Page~11]{Frobenius_eng}).
 In particular, there exist infinitely many primes $p$ coprime to $ |\lcm(P)^2 P(0)|$ such that $\lcm(P) P(x) \in \Z[X]$ splits over $\F_p$. For every such prime, it is clear that $P(x)$ splits over $\F_p$ with no zero roots.
\end{proof}

We next show that  \cref{split_over_inf_primes_with_no_zero_roots} extends to the simultaneous splitting of any finite collection of monic rational polynomials.

\begin{lem} \label{product_of_poly}
Let $\F$ be a field.
Let $f(x), g(x)$ be two non-zero polynomials in $ \F[x]$ such that  $f(x)g(x)$ splits over $\F$. Then both $f(x)$ and $g(x)$ split over $\F$. \
\end{lem}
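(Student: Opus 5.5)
The plan is to use unique factorisation in $\F[x]$. Since $\F[x]$ is a principal ideal domain, every non-zero polynomial factors, uniquely up to units and ordering, as a product of irreducible polynomials. A polynomial splits over $\F$ exactly when every irreducible factor in this factorisation has degree one, that is, when it equals $c\prod_i (x-a_i)$ with $c\in\F^\times$ and $a_i\in\F$.

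The argument runs as follows. Factor $f(x)=c_f\prod_j \pi_j(x)$ and $g(x)=c_g\prod_k \sigma_k(x)$ into monic irreducibles $\pi_j,\sigma_k$ times non-zero constants; this uses $f,g\neq 0$. Multiplying gives a factorisation
\[
f(x)g(x) = c_fc_g \prod_j \pi_j(x)\prod_k \sigma_k(x)
\]
of $fg$ into monic irreducibles. By hypothesis $fg = c\prod_i (x-a_i)$, which is another such factorisation. By uniqueness, the multiset $\{\pi_j\}\cup\{\sigma_k\}$ coincides with the multiset $\{x-a_i\}$. Hence each $\pi_j$ and each $\sigma_k$ is linear, so both $f$ and $g$ split over $\F$.

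An alternative argument avoids invoking uniqueness explicitly. Take an irreducible factor $\pi$ of $f$. Then $\pi$ divides $fg=c\prod_i(x-a_i)$. Since $\pi$ is prime in the PID $\F[x]$, it divides some $x-a_i$, so $\pi$ is associate to $x-a_i$ and is therefore linear. The same reasoning applies to $g$.

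There is no real obstacle here. The only point needing care is the degenerate case where $f$ or $g$ is a non-zero constant. A constant has an empty product of irreducible factors and splits trivially, and the argument above handles it without change.
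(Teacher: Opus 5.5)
Your proposal is correct and takes the same approach as the paper. The paper simply notes that $\mathbb{F}[x]$ is a unique factorisation domain and concludes immediately; your comparison of the monic irreducible factorisations of $f$, $g$ and $fg$ (or, equivalently, the primeness variant) spells out exactly that one-line argument.
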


\begin{proof}
Since $\F$ is a field, $\F[x]$ is a unique factorisation domain, from which the result follows immediately.
\end{proof}

\begin{lem} [Infinite splitting primes for finite collection of polynomials] \label{mult_char_polys}
Suppose $f_1(x), \ldots, f_m(x)$ are non-zero monic polynomials in $\Q[X]$ with $f_i(0) \neq 0$, then there are infinitely many primes $p$ such that every $f_i(x)$ split over $\F_p$ with no zero roots.
\end{lem}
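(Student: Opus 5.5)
The plan is to reduce to the single-polynomial case, \cref{split_over_inf_primes_with_no_zero_roots}, by passing to the product polynomial. Set
\[
P(x) := \prod_{i=1}^m f_i(x) \in \mathbb{Q}[x].
\]
Then $P$ is monic, being a product of monic polynomials, and $P(0) = \prod_i f_i(0) \neq 0$. So \cref{split_over_inf_primes_with_no_zero_roots} gives infinitely many primes $p$ for which $P$ splits completely over $\mathbb{F}_p$ and has no zero root.

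Next we make sure the reduction mod $p$ is compatible with the factorisation. We discard the finitely many primes dividing some $\operatorname{lcm}(f_i)$; infinitely many good primes remain. For any remaining prime $p$, each $f_i$ has a well-defined reduction $\bar f_i \in \mathbb{F}_p[x]$. This reduction is monic, hence non-zero. Since reduction mod $p$ is a ring homomorphism $\mathbb{Z}_{(p)}[x] \to \mathbb{F}_p[x]$, we get $\bar P = \prod_i \bar f_i$. (The quantity $\operatorname{lcm}(P)$ divides $\prod_i \operatorname{lcm}(f_i)$, so $p \nmid \operatorname{lcm}(P)$ as well.)

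Now apply \cref{product_of_poly} repeatedly, or by induction on $m$, writing $\bar P = \bar f_i \cdot \prod_{j\neq i} \bar f_j$. This shows that each $\bar f_i$ splits over $\mathbb{F}_p$. Each root of $\bar f_i$ is a root of $\bar P$, which has no zero root, so no $\bar f_i$ has a zero root.

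The only point needing care, and it is minor, is the compatibility of reduction with products and the choice of primes avoiding all denominators. Everything else is immediate from the two preceding lemmas.
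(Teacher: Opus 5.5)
Your proof is correct and follows the same route as the paper, which proves the statement in one line by applying \cref{split_over_inf_primes_with_no_zero_roots} to the product $\prod_i f_i$ and then \cref{product_of_poly}. You additionally make explicit two details the paper leaves implicit: discarding the primes that divide the denominators, and the compatibility of reduction mod $p$ with products.
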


\begin{proof}
This follows from \cref{split_over_inf_primes_with_no_zero_roots} and \cref{product_of_poly}.
\end{proof}

We now turn our attention to the behaviour of the multiplicities of the roots of a single polynomial $P(x)$ when reduced modulo $p$ for $p$ in the set of splitting primes. Let $P(x)$ be a monic polynomial in  $ \qx$, we will let $\Pr(P)$ be the set of primes $p$ such that $P(x)$ splits over $\F_p$ with no zero roots. Let $p \in \Pr(P)$, we define $\lambda(P,p)$ as the lowest common multiple of the multiplicative orders of the set of  roots of $P(x)$ in $\F_p$.

\begin{thm} \label{diverges_mul_order_rat} [The multiplicative order of the roots]
Let $P(x)$ be a polynomial in $\mathbb{Q}[x]$ with a root that is not a root of unity. Then any infinite subset of $\{\lambda(P,p) \mid p \in \Pr(P)\}$ is unbounded; equivalently, the sequence $\{\lambda(P,p) \mid p \in \Pr(P)\}$ diverges to infinity.
\end{thm}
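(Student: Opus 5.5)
Fix a root $\beta\in\overline{\mathbb{Q}}$ of $P$ that is not a root of unity. The strategy is to show that for every $L\ge 1$ only finitely many $p\in\Pr(P)$ satisfy $\lambda(P,p)\le L$. This is equivalent to the statement: if an infinite subset of values were bounded by $L$, it would come from infinitely many primes (each value occurs for at least one prime, and distinct values need distinct primes), which contradicts finiteness.

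First, reduce to a minimal polynomial and clear denominators. Let $f(x)\in\mathbb{Z}[x]$ be the primitive integer multiple of the minimal polynomial of $\beta$ over $\mathbb{Q}$. Write $K=\mathbb{Q}(\beta)$, $c$ for the leading coefficient of $f$, and $d=\deg f$. Since $f$ divides $\lcm(P)P(x)$ in $\mathbb{Q}[x]$, \cref{product_of_poly} shows that for all but finitely many $p\in\Pr(P)$ (those with $p\nmid c\cdot\lcm(P)$), $f$ splits over $\F_p$. Its roots mod $p$ are among the roots of $P$ mod $p$, and all of them are nonzero. Hence, if $\lambda(P,p)\le L$, then some root $r$ of $f$ mod $p$ has multiplicative order $k\le L$, so $r^k=1$ in $\F_p$.

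Second, turn this into a divisibility statement. Let $\mathcal{O}$ be the ring of integers of $K$. For $p\nmid c\cdot\operatorname{disc}(f)$ (cofinitely many primes), a root $r$ of $f$ in $\F_p$ gives a prime $\mathfrak{p}\mid p$ of $\mathcal{O}$ with residue field $\F_p$ and $\beta\equiv r \pmod{\mathfrak{p}}$; this is the Dedekind--Kummer correspondence, applied after localising away from $c$ so that $\beta$ is integral there. Then $\beta^k-1\in\mathfrak{p}$, so $p$ divides the norm
\[
N_k:=N_{K/\mathbb{Q}}\bigl(c^{k}(\beta^k-1)\bigr)\in\mathbb{Z}.
\]
Multiplying by $c^k$ only multiplies the norm by a power of $c$, which is harmless since $p\nmid c$. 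Because $\beta$ is not a root of unity, none of its conjugates is one, so $\beta^k\neq 1$ and $N_k\neq 0$ for every $k\ge 1$. Therefore every prime $p\in\Pr(P)$ with $\lambda(P,p)\le L$, apart from finitely many exceptions, divides $\prod_{k=1}^{L}N_k$. This is a fixed nonzero integer, so it has only finitely many prime divisors, and the claim follows.

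The main obstacle is the second step: passing rigorously from ``$f$ has a root $r$ mod $p$ with $r^k=1$'' to ``$p$ divides a fixed nonzero integer depending on $k$''. I would first try to avoid number fields altogether with a resultant formulation. The element $r$ is a common root of $f$ and $x^k-1$ mod $p$, so for $p\nmid c$ we get $p\mid \operatorname{Res}(f,x^k-1)$. This resultant is $\pm c^{k}\prod_{\sigma}(\sigma(\beta)^k-1)$, which is a nonzero integer precisely because no conjugate of $\beta$ is a root of unity. This argument works directly over $\mathbb{Z}$ and needs only $p\nmid c$, since reduction mod $p$ commutes with the resultant when the leading coefficients do not vanish mod $p$. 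I would therefore present the resultant version and keep the number-field description only as intuition.
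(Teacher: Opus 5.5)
Your proof is correct, but it takes a different route from the paper.

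\textbf{The paper's route.} It argues by contrapositive and uses all roots of $P$ at once. If $\lambda(P,p)\le M$ for infinitely many $p$, then $P$ divides $(x^{M!}-1)^n$ modulo each such $p$. Writing $(x^{M!}-1)^n=PQ+R$ in $\mathbb{Q}[x]$, the remainder $R$ vanishes modulo infinitely many primes. Hence $R=0$, and every root of $P$ is a root of unity.

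\textbf{Your route.} You isolate one non-torsion root $\beta$ and pass to its primitive minimal polynomial $f$. You then show that any prime at which some root of $\bar f$ has order $k\le L$ divides the nonzero integer $\operatorname{Res}(f,x^k-1)=c^k\prod_\sigma(\sigma(\beta)^k-1)$.

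\textbf{Comparison.}
\begin{itemize}
\item Passing to $f$ is essential for you, since $\operatorname{Res}(P,x^k-1)$ vanishes for suitable $k$ whenever $P$ has a root of unity among its roots. The paper's division argument never needs a nonzero invariant attached to a single root, so it avoids this issue.
\item Your version is effective. For a given $L$, the exceptional primes lie among the prime divisors of $c\operatorname{lcm}(P)\prod_{k\le L}\operatorname{Res}(f,x^k-1)$.
\item Your version also gives a stronger conclusion. Even the \textit{smallest} multiplicative order of a root of $\bar f$ tends to infinity, not just the lcm of the orders over all roots of $P$.
\item The paper's version is lighter: it needs only the division algorithm. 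You additionally need resultants and Gauss's lemma. You use the latter tacitly to reduce $\operatorname{lcm}(P)P=fg$ modulo $p$ with $g\in\mathbb{Z}[x]$; alternatively, just discard the finitely many primes dividing denominators of $g$.
\end{itemize}

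\textbf{One small point.} Your gloss on the ``equivalently'' clause, in terms of an infinite set of \textit{values} bounded by $L$, is muddled, since no such set exists. The meaningful statement concerns $\lambda(P,p)$ as $p$ ranges over an infinite set of primes. Your claim that only finitely many $p$ satisfy $\lambda(P,p)\le L$ is exactly that statement.
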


In \citep[Corollary 7.12]{Guo_Tointon}, an analogous result was established for monic integer matrices possessing at least one root with an absolute value not equal to $1$. Here, we extend this to rational polynomials, though we must explicitly require the existence of a root that is not a root of unity. This distinction is necessary for the following reason: if $f(x) \in \mathbb{Z}[x]$ is monic, Kronecker's Theorem implies that all its roots satisfy $|\alpha|=1$ if and only if those roots are roots of unity. However, this equivalence fails for monic polynomials in $\mathbb{Q}[x]$. For example, the monic polynomial $x^2 - \tfrac{4}{3}x + 1$ has roots on the unit circle ($|\alpha|=1$) that are not roots of unity.
\begin{proof}
We prove the contrapositive. Suppose there exists an infinite subset \( P' \subseteq \Pr(P) \) such that \( \lambda(P,p) \le M \) for all \( p \in P' \). We will show that every root of \( P \) is an \( M! \)-th root of unity. For each \( p \in P' \), write \( P(x) = \prod_{i=1}^n (x - a_{i,p}) \).  Then we have \( a_{i,p}^{M!} \equiv_p 1 \), so \( x - a_{i,p} \mid x^{M!} - 1 \pmod{p} \), hence
\[
P(x) \mid (x^{M!} - 1)^n \pmod{p}.
\]

Applying the Euclidean division algorithm in \( \mathbb{Q}[x] \), we write
\[
(x^{M!} - 1)^n = P(x) Q(x) + R(x),
\]
where \( Q(x), R(x) \in \mathbb{Q}[x] \) and \( \deg(R) < \deg(P) = n \). There are infinitely many \( p \in P' \) with
\[
p > \max\{\operatorname{lcm}(Q), \operatorname{lcm}(R)\}.
\]
For such \( p \), we have
\[
R(x) \equiv 0 \pmod{p}.
\]
Therefore, we must have \( R(x) = 0 \) over \( \mathbb{Q}[x] \), so \( P(x) \mid (x^{M!} - 1)^n \) in \( \mathbb{Q}[x] \). Every root of \( (x^{M!} - 1)^n \) is a root of unity, so every root of \( P \) is a root of unity, completing the proof.
\end{proof}
We will later apply these number-theoretic results to the study of sequences of matrices in \( \mathrm{GL}_n(\mathbb{Q}) \), where we use the fact that there are infinitely many primes for which all the characteristic polynomials split. Let \( D \) be a finite set of primes and let \( p \notin D \) be a prime. Given a matrix \( M \in \mathbb{Z}_D^{n \times m} \), we denote by \( \tilde{M}_p \in \mathbb{F}_p^{n \times m} \) the reduction of \( M \bmod p \). When it is clear from the context which prime we are referring to, we often drop the subscript \( p \) and simply write \( \tilde{M} \).

\subsection{Finite index subgroups}


Given an additive subgroup $A$ of $\mathbb{Q}^n$ and $m \in \N$, it follows from \citep[Exercise~92.5]{infinite_abelian_groups} that $mA$ has finite index in $A$. We give a short proof for the case we are interested in and provide the explicit index.


\begin{lem}[Index of subgroups in $\mathbb{Z}_D^n$] \label{index}
Let $D$ be a set of primes with $p \notin D$. Let $A \le \mathbb{Z}_D^n$ be a subgroup of rank $n$ containing $\mathbb{Z}^n$. Let $\{e_i\}$ be the standard basis of $\mathbb{Z}^n$. Consider the map $$\Phi: A \to (\mathbb{Z}/p\mathbb{Z})^n$$ defined by extending $e_i \mapsto \bar{e}_i$. Then $\Phi$  is a surjective homomorphism with $\ker(\Phi) = pA$.
\end{lem}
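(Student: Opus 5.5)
The map $\Phi$ has to be made precise first. Since $p\notin D$, every element of $\mathbb{Z}_D$ has denominator prime to $p$, so coordinatewise reduction mod $p$ gives a ring homomorphism $\rho\colon\mathbb{Z}_D\to\mathbb{F}_p$, $a/b\mapsto ab^{-1}$. We therefore define $\Phi$ as the restriction to $A$ of the coordinatewise map $\rho^n\colon\mathbb{Z}_D^n\to\mathbb{F}_p^n$. This map is clearly a group homomorphism and sends $e_i\mapsto\bar e_i$. Moreover, it is the unique homomorphism extending $e_i\mapsto\bar e_i$: if two such homomorphisms agree on $\mathbb{Z}^n$, they agree on $A$. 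Indeed, for $a\in A$ choose $N\in\mathbb{Z}_{>0}$, a product of primes in $D$, with $Na\in\mathbb{Z}^n$. Then $N$ is invertible in $\mathbb{F}_p$ and $\Phi(a)=N^{-1}\Phi(Na)$. Surjectivity is immediate, since $\mathbb{Z}^n\subseteq A$ already maps onto $\mathbb{F}_p^n$.

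For the kernel, the inclusion $pA\subseteq\ker\Phi$ is clear, since $\Phi(pa)=p\Phi(a)=0$. For the converse, let $a\in A$ with $\Phi(a)=0$. Then each coordinate $a_i=c_i/b_i$ with $p\nmid b_i$ has $p\mid c_i$, so $a=pa'$ for some $a'\in\mathbb{Z}_D^n$. The whole difficulty is to show that $a'$ actually lies in $A$ rather than merely in $\mathbb{Z}_D^n$. The key observation is that $a'$ is divisible by $p$ in $A$ after clearing denominators. Choose $N$ as above with $Na'\in\mathbb{Z}^n\subseteq A$, where $N$ is a product of primes in $D$ and hence coprime to $p$. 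Then both $pa'=a\in A$ and $Na'\in A$. Since $\gcd(N,p)=1$, Bézout gives $u,v\in\mathbb{Z}$ with $uN+vp=1$, so
\[
a'=u(Na')+v(pa')\in A.
\]
Hence $a\in pA$, and so $\ker\Phi=pA$.

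The main obstacle is this last step, which is where both hypotheses are needed. The condition $p\notin D$ makes $\Phi$ well defined and makes $N$ coprime to $p$. The condition $\mathbb{Z}^n\subseteq A$ guarantees $Na'\in A$. The rank-$n$ hypothesis is then automatic. As a consequence, $[A:pA]=p^n$, which is presumably the explicit index the paper wants to use later.
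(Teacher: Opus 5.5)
Your proposal is correct and follows essentially the same route as the paper: surjectivity comes from $\mathbb{Z}^n \subseteq A$. For the kernel, you write $a = pa'$ coordinatewise and then use Bézout with a denominator-clearing integer coprime to $p$ to show $a' \in A$, exactly as the paper does with $s = \operatorname{lcm}(y_i)$. Your explicit construction of $\Phi$ as coordinatewise reduction $\mathbb{Z}_D \to \mathbb{F}_p$, together with the uniqueness check, is a small improvement, since the paper uses this well-definedness only implicitly.
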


\begin{proof}
The map $\Phi$ is surjective since $A \supseteq \mathbb{Z}^n$ and $\Phi(\mathbb{Z}^n) = (\mathbb{Z}/p\mathbb{Z})^n$. We show $\ker(\Phi) = pA$. The inclusion $pA \subseteq \ker(\Phi)$ is clear. For the reverse, let $\underline{v} \in \ker(\Phi)$. We can write $\underline{v} = \sum_{i=1}^n \frac{m_i}{y_i} \underline{e}_i$ where $p \nmid y_i$ (since $y_i$ has primes factors only in $D$). Since $\underline{v} \in \ker(\Phi)$, it follows that $p \mid m_i$ for all $i$, so $m_i = p k_i$ for some $k_i$. Then:
\[ \underline{v} = p \left( \sum_{i=1}^n \frac{k_i}{y_i} \underline{e}_i \right) \]
Let $\underline{u} = \sum \frac{k_i}{y_i} \underline{e}_i$,  we will show $\underline{u} \in A$. Let $s = \operatorname{lcm}(y_i)$, then $s\underline{u} = \underline{b}$ for some $\underline{b} \in \mathbb{Z}^n \subseteq A$. Since $p \nmid s$, Bézout's identity implies there exist $x, w \in \mathbb{Z}$ such that $xs + wp = 1$. Thus:
\[ \underline{u} = (xs + wp)\underline{u} = x(s\underline{u}) + w(p\underline{u}) = x\underline{b} + w\underline{v} \]
Since $\underline{b} \in A$ and $\underline{v} \in A$, and $A$ is a subgroup, it follows that $\underline{u} \in A$. Therefore, $\underline{v} = p\underline{u} \in pA$, which proves $\ker(\Phi) \subseteq pA$.
\end{proof}

\subsection{Some linear algebra}

 Suppose that $T$ is a linear map over $V = \F^n$, where $\F$ is a field. We define the \emph{eigenspace} of $T$ for $\l$ as
\[
E_{\l}(T) = \{ x \in \F^n \mid (T - \l I) x = 0 \}.
\]
If the characteristic polynomial of the linear map $T$ splits over $\F$, then we know $T$ has an eigenvector. In particular, we have the following observation.

\begin{lem} \label{eigenvector_in_invariant_subspace}
Suppose $\ch(M)$ splits over $\F$. Let $W$ be a non-trivial $M$-invariant subspace. Then $W$ contains an eigenvector of $M$.
\end{lem}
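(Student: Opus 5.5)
The plan is to restrict $M$ to the subspace $W$ and then use the splitting hypothesis to produce an eigenvalue of the restriction. Since $W$ is $M$-invariant, the restriction $M|_W \colon W \to W$ is a well-defined linear map on a non-zero finite-dimensional $\F$-vector space. Once $M|_W$ is shown to have an eigenvalue $\lambda \in \F$, we can pick a non-zero $w \in W$ with $M w = \lambda w$. Such a $w$ lies in $E_\lambda(M)$, so it is an eigenvector of $M$ contained in $W$.

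To get the eigenvalue, the key step is to relate the characteristic polynomial of $M|_W$ to that of $M$.
\begin{enumerate}[(i)]
\item Extend a basis $\{w_1,\dots,w_k\}$ of $W$ (with $k \ge 1$) to a basis of $\F^n$.
\item In this basis $M$ is block upper triangular, with upper-left block the matrix $A$ of $M|_W$ and some complementary block $C$ in the lower right.
\item Hence $\ch(M) = \ch(A)\,\ch(C)$.
\end{enumerate}

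Both factors are non-zero monic polynomials and their product $\ch(M)$ splits over $\F$. By \cref{product_of_poly}, $\ch(A)$ splits over $\F$. Since $\deg \ch(A) = k \ge 1$, it has a root $\lambda \in \F$. Then $\det(A - \lambda I) = 0$, so $M|_W - \lambda I$ has a non-trivial kernel in $W$, which gives the required $w$.

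There is no real obstacle here. The only point that needs care is the block-triangular factorisation of the characteristic polynomial, and that is standard linear algebra.
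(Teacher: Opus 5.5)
Your proof is correct and follows the approach the paper intends. The paper gives no written proof: it treats the lemma as an immediate consequence of the fact that a linear map with split characteristic polynomial has an eigenvector, applied to $M|_W$. You have written out the step that $\chi(M|_W)$ divides $\chi(M)$ via the block upper triangular form, and so splits by \cref{product_of_poly}; this also covers the case where $W$ is the whole space, since the complementary factor is then the constant $1$.
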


\begin{lem} [Commuting matrices and invariant subspace]\label{commute_basis}
Let $T_1, \dots, T_m$ be a set of automorphisms of the vector space $\F^n$ that pairwise commute. Suppose that the characteristic polynomials $\ch(T_1), \dots, \ch(T_m)$ split over $\F$. Let $\mu_1$ be an eigenvalue of $T_1$. Then:
\begin{enumerate}[(i)]
    \item There exist eigenvalues $\mu_2, \dots, \mu_m$ of $T_2, \dots, T_m$, respectively, such that 
    \[ \bigcap_{i=1}^{m} E_{\mu_i}(T_i) \neq \{0\}, \]
i.e., the $T_i$ share a simultaneous eigenvector in the eigenspace $E_{\mu_1}(T_1)$ corresponding to $\mu_1$.
   \item Let $(\lambda_1, \dots, \lambda_n)$ be an ordered list of the eigenvalues of $T_1$,  then there exists a basis $\mathcal{B}$ of $\F^n$ such that:
    \begin{enumerate}
        \item For each $i \in \{1, \dots, m\}$, the matrix $[T_i]_\mathcal{B}$ is upper triangular.
        \item The diagonal entries of $[T_1]_\mathcal{B}$ are exactly $(\lambda_1, \dots, \lambda_n)$.
    \end{enumerate}
\end{enumerate}
\end{lem}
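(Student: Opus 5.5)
The plan is to prove (i) by exploiting that commuting maps preserve each other's eigenspaces, and then to get (ii) by induction on $n$, splitting off a common eigenvector each time.

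\textbf{Part (i).} Let $W_1 = E_{\mu_1}(T_1)$, which is nonzero because $\mu_1$ is an eigenvalue. Since $T_2$ commutes with $T_1$, it maps $W_1$ into itself: if $T_1 x = \mu_1 x$, then $T_1 T_2 x = T_2 T_1 x = \mu_1 T_2 x$. The characteristic polynomial of $T_2$ splits over $\F$, so by \cref{eigenvector_in_invariant_subspace} the subspace $W_1$ contains an eigenvector of $T_2$, with eigenvalue $\mu_2$ say. Set
\[
W_2 = W_1 \cap E_{\mu_2}(T_2) \neq \{0\}.
\]
For $j \ge 3$, $T_j$ commutes with both $T_1$ and $T_2$, so it preserves $E_{\mu_1}(T_1)$ and $E_{\mu_2}(T_2)$, and hence preserves $W_2$. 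Repeating the argument produces $\mu_3, \dots, \mu_m$ and a nested chain of nonzero subspaces. The last one, $W_m = \bigcap_{i} E_{\mu_i}(T_i)$, is nonzero and lies inside $E_{\mu_1}(T_1)$, which gives (i).

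\textbf{Part (ii).} I would induct on $n$; the case $n=1$ is trivial.

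1. Apply (i) with $\mu_1 = \lambda_1$ to obtain a common eigenvector $v_1$ of all the $T_i$, with $T_1 v_1 = \lambda_1 v_1$.
2. Let $L = \operatorname{span}(v_1)$ and pass to $\bar V = \F^n / L$. Each $T_i$ induces an automorphism $\bar T_i$ of $\bar V$, and the $\bar T_i$ still pairwise commute.
3. Since $\ch(T_i) = (x - \mu_i)\,\ch(\bar T_i)$, each $\ch(\bar T_i)$ splits by \cref{product_of_poly}.
4. In particular $\ch(\bar T_1)$ has roots exactly $(\lambda_2, \dots, \lambda_n)$ as a multiset. By induction there is a basis $\bar v_2, \dots, \bar v_n$ of $\bar V$ in which every $\bar T_i$ is upper triangular and $\bar T_1$ has diagonal $(\lambda_2, \dots, \lambda_n)$ in that order.
5. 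Lift each $\bar v_j$ to some $v_j \in \F^n$ and take $\mathcal{B} = (v_1, \dots, v_n)$. For each $j$, $T_i v_j$ equals a combination of $v_2, \dots, v_j$ plus a multiple of $v_1$. So every $[T_i]_\mathcal{B}$ is upper triangular, and the diagonal of $[T_1]_\mathcal{B}$ is $(\lambda_1, \dots, \lambda_n)$.

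\textbf{Main obstacle.} The only delicate point is getting the prescribed order of the diagonal of $T_1$, including repeated eigenvalues. This is handled by choosing the first common eigenvector inside $E_{\lambda_1}(T_1)$, which is exactly what (i) permits. After that, the multiset of eigenvalues of $\bar T_1$ is $\{\lambda_2, \dots, \lambda_n\}$, so the ordered list can be passed to the induction hypothesis unchanged.
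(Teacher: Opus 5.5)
Your proposal is correct and follows essentially the same route as the paper: you prove (i) by repeatedly restricting to the nested intersection of eigenspaces, which commuting maps preserve, and applying \cref{eigenvector_in_invariant_subspace}; you prove (ii) by induction on $n$, quotienting by a common eigenvector chosen in $E_{\lambda_1}(T_1)$ and lifting the triangularising basis. You also check explicitly that the induced maps on the quotient are still commuting automorphisms with split characteristic polynomials, which the paper leaves implicit.
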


\begin{proof}
\begin{enumerate}[(i)]
\item We will prove the statement via induction on $m$. The case for $m=1$ is given in the assumption. Suppose the statement holds for $m-1$. i.e. there exists $\mu_2, \ldots,\mu_{m-1}$, such that $ \bigcap_{i=1} ^{m-1} E_{\mu_i}(T_i)  \neq \{0\}$. Recall the fact that commuting linear maps preserve each other's eigenspaces, it follows that $$T_m \left( \bigcap_{i=1} ^{m-1} E_{\mu_i}(T_i) \right) = \bigcap_{i=1} ^{m-1} T_m \left(E_{\mu_i}(T_i) \right) \subseteq \bigcap_{i=1} ^{m-1} E_{\mu_i}(T_i).$$ According to \cref{eigenvector_in_invariant_subspace}, $T_m$ restricted to this intersection has an eigenvalue $\mu_m$ with an eigenvector in $\bigcap_{i=1} ^{m-1} E_{\mu_i}(T_i)$, which is to say that $ \bigcap_{i=1} ^{m} E_{\mu_i}(T_i) \neq \{0\}$. 

\item We proceed by induction on $n$. The case $n=1$ is trivial. Assume the statement holds for dimension $n-1$. Let $(\lambda_1, \dots, \lambda_n)$ be the ordered eigenvalues of $T_1$. By part (i), there exists a common eigenvector $\underline{v}_1 \neq 0$ such that $T_i \underline{v}_1 = \mu_i \underline{v}_1$ for all $i$, with $\mu_1 = \lambda_1$.

Let $U = \operatorname{span}\{\underline{v}_1\}$. Consider the quotient space $\F^n/U$ of dimension $n-1$. Each $T_i$ induces a map $\bar{T}_i$ on $\F^n/U$. Since $\chi(T_i)$ splits, its factor $\chi(\bar{T}_i)$ also splits, with $\chi(T_i) = (x-\mu_i)\chi(\bar{T}_i)$. By the induction hypothesis, there is a basis $\mathcal{B}' = \{\underline{v}_2 + U, \dots, \underline{v}_n + U\}$ for $\F^n/U$ that upper triangularizes all $\bar{T}_i$, and the diagonal entries of $[\bar{T}_1]_{\mathcal{B}'}$ are exactly $(\lambda_2, \dots, \lambda_n)$.

Let $\mathcal{B} = \{\underline{v}_1, \underline{v}_2, \dots, \underline{v}_n\}$. Since $T_i \underline{v}_1 = \mu_i \underline{v}_1$, the first column of $[T_i]_\mathcal{B}$ has $\mu_i$ on the diagonal and zeros elsewhere. For $j > 1$, the fact that $\bar{T}_i(\underline{v}_j + U) \in \operatorname{span}\{\underline{v}_2 + U, \dots, \underline{v}_j + U\}$ implies $T_i \underline{v}_j \in \operatorname{span}\{\underline{v}_1, \dots, \underline{v}_j\}$. Thus, $[T_i]_\mathcal{B}$ is upper triangular, and the diagonal entries of $[T_1]_\mathcal{B}$ are $(\lambda_1, \dots, \lambda_n)$.
\end{enumerate}
\end{proof}

\begin{cor}\label{index_p_subgroup_rat}
Let \( D \) be a finite set of primes, and let \( M_1, \dots, M_m \in \mathrm{GL}(n, \mathbb{Z}_D) \) be a set of pairwise commuting matrices. Let \( K = K(M_1, \dots, M_m) \). Let \( p \) be a prime not in \( D \) such that each characteristic polynomial \( \chi(M_i) \) splits over \( \mathbb{F}_p \). Let \( \lambda \) be an eigenvalue of \( M_1 \) in \( \mathbb{F}_p \). Then there exists a basis \( \tilde{\mathcal{B}} = \{\underline{v}_1, \dots, \underline{v}_n\} \) of the quotient space \( K / pK \cong \mathbb{F}_p^n \) such that each \( (M_i)_p \) preserves the subspace \( \langle \underline{v}_1, \dots, \underline{v}_{n-1} \rangle \), and
\[
(M_1)_p \;\underline{v}_n = \lambda \, \underline{v}_n + \underline{w}
\]
for some \( \underline{w} \in \langle \underline{v}_1, \dots, \underline{v}_{n-1} \rangle \).

In particular, let \( H_p \) be the subgroup of \( K \) containing \( pK \) that corresponds to the subspace \( \langle \underline{v}_1, \dots, \underline{v}_{n-1} \rangle \) under the quotient map. Then:
\begin{enumerate}
    \item \( H_p \) has index \( p \) in \( K \);
    \item each \( M_i \) preserves \( H_p \);
    \item letting \( \bar{M}_1 \) be the induced endomorphism of \( M_1 \) on \( K/H_p \cong \mathbb{Z}_p \), for every \( w \in K/H_p \), viewed as an element of \( \mathbb{Z}_p \), we have    \[
    \bar{M}_1(w) \equiv \lambda w \pmod{p}.
    \]
\end{enumerate}
\end{cor}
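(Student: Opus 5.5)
We apply \cref{commute_basis}(ii) to the reductions mod $p$ on $K/pK$, choosing the ordering of the eigenvalues of $M_1$ so that $\lambda$ comes \emph{last}. The triangularising basis then gives exactly the flag we need.

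\textbf{Step 1: identify $K/pK$ with $\mathbb{F}_p^n$.} The group $K = K(M_1,\dots,M_m)$ is a subgroup of $\mathbb{Z}_D^n$ of rank $n$ containing $\mathbb{Z}^n$, since $v\in\mathbb{Z}^n$ arises with all $k_i=0$. So \cref{index} gives a surjection $\Phi\colon K\to(\mathbb{Z}/p\mathbb{Z})^n$ with kernel $pK$, and hence $K/pK\cong\mathbb{F}_p^n$. Each $M_i$ preserves $K$, because $M_i$ multiplies a generator $M_1^{k_1}\cdots M_m^{k_m}v$ into another generator, using commutativity. It also preserves $pK$. So $M_i$ induces an endomorphism of $K/pK$. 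Under $\Phi$, the coordinates of any element of $K$ lie in $\mathbb{Z}_D$ and are reduced mod $p$. Hence this induced endomorphism is just multiplication by the reduced matrix $(M_i)_p=\tilde M_i$; this is the one point to check, and it follows since $\Phi$ is reduction of $\mathbb{Z}_D$-coordinates, which is a ring map $\mathbb{Z}_D\to\mathbb{F}_p$. The $\tilde M_i$ are invertible, since $M_i^{-1}\in \mathrm{GL}(n,\mathbb{Z}_D)$ reduces to an inverse. They pairwise commute, and their characteristic polynomials are the reductions of $\chi(M_i)$, which split by hypothesis.

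\textbf{Step 2: triangularise with $\lambda$ last.} List the eigenvalues of $\tilde M_1$ as $(\lambda_1,\dots,\lambda_n)$ with $\lambda_n=\lambda$. Applying \cref{commute_basis}(ii) to $T_i=\tilde M_i$ yields a basis $\tilde{\mathcal B}=\{\underline v_1,\dots,\underline v_n\}$ in which every $\tilde M_i$ is upper triangular and $[\tilde M_1]_{\tilde{\mathcal B}}$ has $(n,n)$ entry $\lambda$. Upper triangularity means each $\tilde M_i$ preserves $\langle \underline v_1,\dots,\underline v_{n-1}\rangle$. Reading off the last column gives $\tilde M_1\underline v_n=\lambda\underline v_n+\underline w$ with $\underline w\in\langle\underline v_1,\dots,\underline v_{n-1}\rangle$.

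\textbf{Step 3: the consequences.} Let $H_p$ be the preimage in $K$ of $W=\langle\underline v_1,\dots,\underline v_{n-1}\rangle$ under $K\to K/pK$.
\begin{enumerate}
\item We have $[K:H_p]=[K/pK:W]=p$.
\item Since $M_i$ preserves $pK$ and its induced map preserves $W$, the matrix $M_i$ preserves $H_p$.
\item The quotient $K/H_p\cong (K/pK)/W$ is one-dimensional over $\mathbb{F}_p$ and is spanned by the image of $\underline v_n$. The induced map sends $\underline v_n+W$ to $\lambda\underline v_n+W$, so $\bar M_1$ acts as multiplication by $\lambda$. Identifying $K/H_p$ with $\mathbb{Z}/p\mathbb{Z}$ then gives $\bar M_1(w)\equiv\lambda w\pmod p$.
\end{enumerate}

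There is no real obstacle here. The only mildly delicate point is Step 1: that the action on $K/pK$ is the entrywise reduction of $M_i$ with respect to the basis coming from \cref{index}. This holds because the entries of $M_i$ and the coordinates of elements of $K$ all lie in $\mathbb{Z}_D$, with $p\notin D$.
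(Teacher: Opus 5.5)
Your proposal is correct and matches the paper's argument, whose entire proof is that the corollary ``follows from \cref{index} and \cref{commute_basis}''. You identify $K/pK$ with $\mathbb{F}_p^n$ via the coordinate-reduction map of \cref{index} and apply \cref{commute_basis}(ii) to the commuting reductions $\tilde M_i$, ordering the eigenvalues of $\tilde M_1$ so that $\lambda$ comes last. You also make explicit what the paper leaves implicit: the induced action on $K/pK$ is entrywise reduction, because $M_i^{\pm1}$ have entries in $\mathbb{Z}_D$ (which also gives $K\subseteq\mathbb{Z}_D^n$). And it is the freedom to choose the diagonal order that yields the invariant hyperplane with quotient eigenvalue $\lambda$.
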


\begin{proof}
    This follows from \cref{index} and \cref{commute_basis}.
\end{proof}

\subsection{The u.c.\ property}

While the following lemma appears elementary, its application in the subsequent proof is somewhat counter-intuitive. \begin{lem} \label{prime_divide}
Let $p$ be a prime number and $R$ be a positive integer. Let $N$ be a positive integer such that $N \mid pR$. If $N > R$, then $p \mid N$.
\end{lem}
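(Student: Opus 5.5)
The plan is to use the prime factorisation of $N$ and argue by contradiction, assuming $p \nmid N$. Since $N \mid pR$, we have $\gcd(N,p)=1$ because $p$ is prime and does not divide $N$. Euclid's lemma, in its generalised form (if $N \mid ab$ and $\gcd(N,a)=1$ then $N \mid b$), then gives $N \mid R$.

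Because $R$ is a positive integer and $N \mid R$, it follows that $N \le R$. This contradicts the hypothesis $N > R$, so $p \mid N$.

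The only point that needs care is the coprimality step. One must note that the divisors of the prime $p$ are $1$ and $p$, so $\gcd(N,p)\in\{1,p\}$, and $\gcd(N,p)=p$ exactly when $p\mid N$. Once that is recorded, the generalised Euclid lemma (a consequence of Bézout's identity, as already used in \cref{index}) finishes the argument. I expect no genuine obstacle. The statement is elementary, and its interest lies in how it is applied later: there one bounds the size of an orbit or a quotient by $pR$ and deduces divisibility by $p$ from a size lower bound.
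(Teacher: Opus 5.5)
Your proof is correct and is the paper's argument: assume $p \nmid N$, deduce $\gcd(N,p)=1$, apply Euclid's lemma to get $N \mid R$, hence $N \le R$, which contradicts $N > R$. Your opening mention of prime factorisation is never actually used and is not needed.
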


\begin{proof}
We proceed by contradiction. Assume $p \nmid N$. Since $p$ is prime, this implies $\gcd(N, p) = 1$. Given that $N$ divides the product $pR$, by Euclid's Lemma, we must have $N \mid R$. This implies that $N \le R$, which contradicts the hypothesis that $N > R$. Therefore, $p \mid N$.
\end{proof}

\begin{lem} \label{contains_non_trivial}
    Let $G = A \rtimes (B \oplus C)$ be a finite group. Let $H$ be a subgroup of $G$ such that $|H| > |A||C|$ and $A \subseteq H$. Then $H$ contains an element $(1_A, b, 1_C)$ for some $b \neq 1_B$.
\end{lem}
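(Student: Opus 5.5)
Since $A\subseteq H$, the subgroup $H$ is a union of cosets of $A$, and $H/A$ is a subgroup of $G/A\cong B\oplus C$ with $|H/A| = |H|/|A| > |C|$. The strategy is to project to $C$ and use counting: consider the composite homomorphism $\pi\colon H \to G/A \cong B\oplus C \to C$, where the last map is the coordinate projection onto $C$. Its image has size at most $|C|$, so
\[
|\ker \pi| = \frac{|H|}{|\pi(H)|} \ge \frac{|H|}{|C|} > |A|.
\]
Hence $\ker\pi$, which consists of elements of $H$ of the form $(a,b,1_C)$, is strictly larger than $A = \{(a,1_B,1_C)\}$, and contains some $(a,b,1_C)$ with $b\neq 1_B$.

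It then remains to remove the $A$-component. Since $A\subseteq H$, the element $(a,1_B,1_C)^{-1}=(a^{-1},1_B,1_C)$ lies in $H$, and with the multiplication convention $(k_1,h_1)(k_2,h_2)=(k_1(h_1\cdot k_2),h_1h_2)$ we get
\[
(a^{-1},1_B,1_C)\cdot(a,b,1_C) = (a^{-1}a,\, b,\, 1_C) = (1_A,b,1_C)\in H,
\]
as required. Alternatively one may argue directly that $\ker\pi \supsetneq A$ and that $\ker\pi/A$ embeds in $B$ as a nontrivial subgroup, then pick any element of $H$ mapping to a nontrivial $b$ and multiply by the appropriate element of $A$ on the left.

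I do not expect a genuine obstacle: the only points needing care are identifying $G/A$ with $B\oplus C$ (valid because $A$ is normal and the complement is $B\oplus C$) and checking that left multiplication by an element of $A$ cancels the $A$-coordinate without altering the $B\oplus C$ coordinates, which holds by the semidirect product formula recalled above.
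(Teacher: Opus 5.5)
Your proof is correct and is essentially the paper's argument. The paper pigeonholes the coordinate map $H \to A \times C$, $(a,b,c)\mapsto(a,c)$, to find two elements of $H$ with equal $A$- and $C$-coordinates, clears the $A$-part using $A \subseteq H$, and divides one by the other; you reach the same counting conclusion through the homomorphism $H \to C$, whose kernel has order $>|A|$ and so strictly contains $A$, before clearing the $A$-coordinate in the same way.
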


\begin{proof}
Consider the projection map $\pi: H \to A \times C$ defined by $\pi(a, b, c) = (a, c)$. Since $|H| > |A \times C|$, the Pigeonhole Principle guarantees the existence of distinct $h_1 = (a, b_1, c)$ and $h_2 = (a, b_2, c)$ in $H$ where $b_1 \neq b_2$. Since $A \subseteq H$, the element $(a^{-1}, 0, 0)$ is in $H$, and by closure, the following element must be in $H$:$$\Biggl( (a^{-1}, 0, 0) h_1 \Biggr) \Biggl( (a^{-1}, 0, 0) h_2 \Biggr)^{-1} = (1_A, b_1, c) (1_A, b_2, c)^{-1} = (1_A, b_1 b_2^{-1}, 1_C).$$ 
\end{proof}

\begin{lem} \label{normal_subgroup_contain_Zp}
Let $p$ be a prime and $G = \mathbb{Z}_p \rtimes \left( \bigoplus_{i=1}^m \mathbb{Z}_{r_i} \right)$ be a finite group, where each $r_i \mid (p-1)$. Define $r = r_1$, and let $1_r$ be the generator of $\mathbb{Z}_{r}$, and suppose $1_r$ acts on $\mathbb{Z}_p$ via the map $1_r \cdot 1 = k \in \mathbb{Z}_p \setminus \{0\}$, where $r = \text{ord}_p(k)$. Let $N$ be a normal subgroup of $G$. If the quotient group $G/N$ contains a nilpotent subgroup with index strictly less than $r$, then $N$ must contain the subgroup $\mathbb{Z}_p \times \{0\}$.
\end{lem}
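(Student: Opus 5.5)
We argue by contradiction: suppose $N \cap (\mathbb{Z}_p \times \{0\})$ is trivial. Since $\mathbb{Z}_p$ has prime order, this intersection is either trivial or all of $\mathbb{Z}_p \times \{0\}$, so it suffices to rule out the trivial case. Write $A = \mathbb{Z}_p \times \{0\}$, $B = \mathbb{Z}_r$ (the first factor) and $C = \bigoplus_{i \ge 2} \mathbb{Z}_{r_i}$. Let $\pi\colon G \to G/N$ be the quotient map, and let $Q \le G/N$ be nilpotent with $[G/N : Q] < r$. Set $H = \pi^{-1}(Q)$. Then $[G:H] = [G/N:Q] < r$ and $H/N \cong Q$ is nilpotent. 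The aim is to produce a nontrivial element of $B$ inside $H$ that acts nontrivially on $A$, and then use nilpotency of $H/N$ together with $A \cap N = 1$ to force a contradiction.

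\textbf{Step 1: $A \le H$.} Since $[G:H] < r \le p-1 < p$, the index of $H \cap A$ in $A$ divides $p$ and is less than $p$. Hence $H \cap A = A$, so $A \le H$.

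\textbf{Step 2: $H$ contains an element $(1_A, b, 1_C)$ with $b \neq 1_B$.} Here $|G| = p\, r\, |C|$, so $|H| = |G|/[G:H] > p\,r\,|C|/r = |A||C|$. \cref{contains_non_trivial} then gives such a $b = j \cdot 1_r$ with $j \not\equiv 0 \pmod r$. This element acts on $A$ by multiplication by $k^j$, and $k^j \neq 1$ in $\mathbb{F}_p$ because $\operatorname{ord}_p(k) = r$.

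\textbf{Step 3: contradiction from nilpotency.} Since $A \cap N = 1$, the map $\pi$ restricts to an isomorphism $A \to \pi(A)$, and $\pi(A)$ is normal in $H/N$. Let $h = (1_A, b, 1_C)$. For $a \in A$ we have $[h,a] = a^{k^j - 1}$, and the exponent $k^j - 1$ is nonzero mod $p$. Hence $[h, A] = A$, and by induction $[h, [h, \dots, [h, A]]] = A$ for every number of iterations. Applying $\pi$, the iterated commutator $[\pi(h), \dots, [\pi(h), \pi(A)]]$ equals $\pi(A) \neq 1$ for every length. This contradicts nilpotency of $H/N$, since in a nilpotent group of class $c$ all commutators of length $c+1$ vanish. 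Therefore $A \le N$, as claimed.

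The step needing most care is Step 2: the count $|H| > |A||C|$ must use the strict inequality $[G:H] < r$, and we need $\gcd(k^j - 1, p) = 1$ whenever $r \nmid j$. The latter is exactly what $\operatorname{ord}_p(k) = r$ gives.
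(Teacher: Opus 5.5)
Your proof is correct and follows essentially the same route as the paper: show $\mathbb{Z}_p \times \{0\} \le H$, apply \cref{contains_non_trivial} using $|H| > |A||C|$ to get a nontrivial element of the $\mathbb{Z}_r$-factor acting by $k^j \neq 1$, and then use iterated commutators to show that $\mathbb{Z}_p \times \{0\}$ cannot be killed in a nilpotent quotient (your contradiction framing via $A \cap N = 1$ is equivalent to the paper's observation that every term of the lower central series of $H$ contains $\mathbb{Z}_p \times \{0\}$). The only real difference is your Step 1, where the bound $[A : A\cap H] \le [G:H] < r < p$ replaces the paper's detour through \cref{prime_divide} and an element of order $p$; this is a slight simplification.
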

\begin{proof}
Our strategy is to show that a subgroup having a ``small index" forces $\mathbb{Z}_p$ to be contained in each lower central term of this subgroup, from which the statement follows immediately. We begin by showing that $\mathbb{Z}_p$ is contained in the whole subgroup. Let $\frac{\Gamma'}{N}$ be the nilpotent subgroup of $\frac{\Gamma}{N}$ with index $< r$. Then, we have:
\[ \frac{p r_1 r_2 \cdots r_m}{|\Gamma'|} = \frac{|\Gamma|}{|\Gamma'|} = \frac{|\Gamma /N|}{|\Gamma'/N|} < r =r_1. \]
 Hence, we have $r_1 \, r_2 \cdots r_m < p \, r_2 \cdots r_m < |\Gamma'|$.   Since $|\Gamma'|$ divides $p r_1 \cdots r_m$, \cref{prime_divide} tells us that we have $p \mid |\Gamma'|$. Therefore, there exists an element $(t,\underline{s}) \in \Gamma'$ with order $p$. It follows that $(0,\underline{0}) = (t,\underline{s})^p = (x, p\underline{s}) = (x, \underline{s})$ for some $x \in \mathbb{Z}_p$. Hence $\underline{s} = \underline{0}$, so we have $\mathbb{Z}_p = \langle (t, \underline{0}) \rangle \subseteq \Gamma'$. Also, since $p r_2 \cdots r_m < |\Gamma'|$, \cref{contains_non_trivial} tells us that $\Gamma'$ contains an element of the form $(0,\underline{u})$, where $\underline{u} \in \bigoplus_{i=1}^m \mathbb{Z}_{r_i}$ has only one non-zero entry $u$ on the $\mathbb{Z}_{r_1}$ component.
We claim that the subgroup $[\{0\} \times \langle \underline{u} \rangle, \mathbb{Z}_p \times \{0\}]$ contains $\mathbb{Z}_p \times \{0\}$. Note that, since $u \in \mathbb{Z}_r \setminus \{0\}$, we have
\begin{equation} \label{obtain_an_generator}
    u \cdot 1 = k^{u} \not \equiv 1 \pmod{p}.
\end{equation}
It follows that $[\{0\} \times \mathbb{Z}_r, \mathbb{Z}_p \times \{0\}] \ni (0,\underline{u} ) (1,0) (0,-\underline{u}) (-1,0) = (k^{u} - 1, 0)$. Hence, $[\{0\} \times \mathbb{Z}_r, \mathbb{Z}_p \times \{0\}] \supseteq \langle (k^{u} - 1, 0) \rangle = \mathbb{Z}_p \times \{0\}$.
Therefore, by induction, we can deduce that every term in the lower central series of $\Gamma'$ contains $\mathbb{Z}_p \times \{0\}$. Since the quotient $\frac{\Gamma'}{N}$ is nilpotent, $N$ must contain a term in the lower central series of $\Gamma'$. Therefore, we have $\mathbb{Z}_p \subseteq N$.
\end{proof}

Note that \eqref{obtain_an_generator} shows why \( r \) being the multiplicative order of \( k \) is important, as any non-identity element in \( \mathbb{Z}_r \) conjugated with a non-identity element in \( \mathbb{Z}_p \) gives an element that is not the multiplicative identity of \( \mathbb{Z}_p \).

The following theorem is a consequence of a strengthened version of Gromov’s Theorem by Breuillard, Green, and Tao \citep{bgt}.
 
\begin{thm}[{\citep[Theorem 4.1]{nil}}]\label{matt_lemma} Let $\varepsilon, \delta>0$. Then there exist constants $C_{\epsilon,\delta}$ and  $D_{\epsilon,\delta}$ depending only on $\epsilon,\delta$ such that the following holds. Suppose $F$ is a finite group generated by a symmetric subset $S$ containing the identity whose Cayley graph has diameter $\gamma:=\diam_S(F)$ and satisfies
 \[\gamma \geq \left( \frac{|F|}{|S|} \right)^\epsilon\]
and $\gamma \geq D_{\epsilon,\delta}$. Then
 $F$ has a normal subgroup $H$ contained in $S^{\lfloor\gamma^\delta\rfloor}$ such that $F/H$ has a nilpotent subgroup of index  at most $C_{\epsilon,\delta}$.
\end{thm}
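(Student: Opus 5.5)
The plan is to derive this directly from the ``finite-scale Gromov theorem'' of Breuillard, Green and Tao \citep{bgt}, in the form of their corollary on groups of polynomial growth at a single scale. That corollary states: for every $d>0$ there is $n_0(d)$ such that if $S$ is a finite symmetric generating set of a group $\Gamma$ containing the identity, $n\ge n_0(d)$, and $|S^n|\le n^d|S|$, then $\Gamma$ has a normal subgroup $H\subseteq S^{n}$ such that $\Gamma/H$ has a nilpotent subgroup of index $O_d(1)$. I will apply it with $\Gamma=F$ at a scale $n$ comparable to $\gamma^\delta$.

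The first step is to fix $n:=\lfloor\gamma^\delta\rfloor$. The hypothesis $\gamma\ge (|F|/|S|)^\epsilon$ gives $|F|\le |S|\,\gamma^{1/\epsilon}$. Trivially $|S^n|\le |F|$. Since $n\ge \gamma^\delta/2$ once $\gamma\ge 1$, we have
\[
|S^n|\le |S|\gamma^{1/\epsilon}\le |S|(2n)^{1/(\epsilon\delta)}\le n^{d}|S|
\]
with, say, $d:=2/(\epsilon\delta)$, provided $n$ is at least a constant depending only on $\epsilon,\delta$. The second step is to choose $D_{\epsilon,\delta}$ large enough that $\gamma\ge D_{\epsilon,\delta}$ forces both $n\ge n_0(d)$ and the absorption of the factor $2^{1/(\epsilon\delta)}$ into $n^{d}$.

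The BGT corollary then yields a normal subgroup $H\trianglelefteq F$ with $H\subseteq S^n=S^{\lfloor\gamma^\delta\rfloor}$, and $F/H$ has a nilpotent subgroup of index at most $C(d)$. Setting $C_{\epsilon,\delta}:=C(2/(\epsilon\delta))$, both constants depend only on $\epsilon$ and $\delta$, which is the statement.

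The only delicate point is matching the exact hypotheses of the BGT result. Some versions require growth control such as $|S^{n}|\le n^d|S|$ with $S^n$ replaced by a slightly larger power, or they are phrased via approximate groups. If only the approximate-group structure theorem is available, the fallback is a pigeonhole over dyadic scales between $\gamma^{\delta/2}$ and $\gamma^\delta$. The ratios $|S^{2^{j+1}}|/|S^{2^j}|$ over these roughly $(\delta/2)\log_2\gamma$ scales multiply to at most $\gamma^{1/\epsilon}$, so some scale $m$ has $|S^{5m}|\le K|S^m|$ with $K=2^{O(1/(\epsilon\delta))}$. Then $A=S^{2m}$ is an $O_K(1)$-approximate group. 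BGT gives a normal (in $\langle A\rangle=F$) subgroup $H\subseteq A^4\subseteq S^{\lfloor\gamma^\delta\rfloor}$ and a nilprogression whose image in $F/H$ generates a nilpotent subgroup covering $A$ by $O_K(1)$ translates. Since $S\subseteq A$ generates $F$, a standard covering argument then bounds the index of that nilpotent subgroup in $F/H$ by $O_K(1)$. I expect this last index bound to be the main technical step.
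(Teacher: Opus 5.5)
Your main argument is correct and matches the paper, which does not prove this statement but imports it from Breuillard--Tointon as a consequence of the Breuillard--Green--Tao one-scale Gromov theorem; your reduction, taking $n=\lfloor\gamma^\delta\rfloor$ and bounding $|S^n|\le|F|\le|S|\gamma^{1/\epsilon}\le n^{2/(\epsilon\delta)}|S|$ for $n$ large, is exactly what is needed to invoke that theorem. The dyadic fallback is unnecessary, and as written it overstates what the approximate-group structure theorem gives: there $H$ is only normal in the subgroup generated by the coset nilprogression, so a bounded-index argument and a normal-core step would still be needed. The main route does not depend on the fallback.
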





We now characterise the groups of the form \( K(M_1, \dots, M_m) \rtimes \mathbb{Z}^m \) with the u.c.\ property.

\begin{prop} \label{char_K_Z_m}
Let $G = K(M_1, \dots, M_m) \rtimes \mathbb{Z}^m$. Suppose $G$ has uniformly almost flat coset spaces. Then every eigenvalue of every $M_i$ is a root of unity. 
\end{prop}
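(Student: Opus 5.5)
We argue by contradiction. Suppose, without loss of generality, that $M_1$ has an eigenvalue that is not a root of unity, and that $G$ has u.c.$(\alpha)$ with constant $\varepsilon$. The plan is to build, for infinitely many primes $p$, a finite quotient of a subgroup of bounded index in $G$ of the shape covered by \cref{normal_subgroup_contain_Zp}, namely
\[
\mathbb{Z}_p \rtimes \textstyle\bigoplus_i \mathbb{Z}_{r_i},
\]
with $r_1$ large. We then use \cref{matt_lemma} together with \cref{normal_subgroup_contain_Zp} to contradict the diameter lower bound.

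\textbf{Choice of primes and the finite quotient.} By \cref{mult_char_polys}, there are infinitely many primes $p \notin D$ such that every $\chi(M_i)$ splits over $\mathbb{F}_p$ with no zero roots. Fix such a $p$ and a root $\lambda_p$ of $\chi(M_1)$ mod $p$ lying above the non-root-of-unity eigenvalue; we may pass to an irreducible factor of $\chi(M_1)$ over $\mathbb{Q}$ to arrange this. Applying \cref{diverges_mul_order_rat} to that factor, we may choose $\lambda_p$ with $r_p := \ord_p(\lambda_p) \to \infty$. By \cref{index_p_subgroup_rat}, there is an $M_i$-invariant subgroup $H_p \le K$ of index $p$. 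On $K/H_p \cong \mathbb{Z}_p$ each $M_i$ acts by multiplication by some $\mu_i \in \mathbb{F}_p^\times$, with $\mu_1 = \lambda_p$. Thus $H_p$ is normal in $G$, and
\[
G/\bigl(H_p \rtimes \langle t_1^{r_1},\dots,t_m^{r_m}\rangle\bigr) \cong \mathbb{Z}_p \rtimes \textstyle\bigoplus_i \mathbb{Z}_{r_i} =: F_p,
\]
where $r_i = \ord_p(\mu_i)$ divides $p-1$. (Since the $t_i^{r_i}$ act trivially on $K/H_p$, the subgroup in question is normal.) We have $|F_p| \le p^m$.

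\textbf{Diameter estimate.} The aim is to show $\diam(F_p)$ is small. Suppose instead that $\diam_S(F_p) \ge \varepsilon |F_p|^\alpha$ for all such $p$. Then \cref{matt_lemma} applies with $\epsilon = \alpha/2$ and small $\delta$, once $p$ is large. It yields a normal subgroup $N \subseteq S^{\lfloor \gamma^\delta\rfloor}$ such that $F_p/N$ has a nilpotent subgroup of index at most $C$. Since $r_p \to \infty$, eventually $r_1 > C$, and \cref{normal_subgroup_contain_Zp} gives $\mathbb{Z}_p \subseteq N \subseteq S^{\gamma^\delta}$. The next step, which I expect to be the main obstacle, is to turn this into a contradiction by bounding $\gamma$ itself. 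In $F_p$, every element of $\mathbb{Z}_p$ then has length at most $\gamma^\delta$, while the abelian quotient $\bigoplus \mathbb{Z}_{r_i}$ has diameter at most $\sum r_i$. Hence $\gamma \le \gamma^\delta + \sum_i r_i$. This only contradicts $\gamma \ge \varepsilon(p\prod r_i)^\alpha$ if $\sum r_i$ is small relative to $(p\prod r_i)^\alpha$, which need not hold.

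\textbf{Fixing the estimate.} I would therefore not quotient by the full $t_i^{r_i}$. Instead I would use the u.c.\ property on a coset space: take $H = H_p \rtimes \langle t_1^{a},\dots\rangle$ together with a large power $t_1^{L}$ for suitable $L$, or pass to the subgroup $\langle t_2,\dots,t_m\rangle$-fixed part. The goal is a finite coset space of index roughly $p \cdot r_1$ in which the $\mathbb{Z}_p$ part is crossed in $O(\log p)$-type or $\gamma^\delta$ steps and the cyclic part has size $r_1$. Then $\gamma \lesssim \gamma^\delta + r_1 \ll (p r_1)^\alpha$ unless $r_1 \ge p^{c}$. Alternatively, the bound $\gamma \le \gamma^\delta + \sum r_i$ combined with $\gamma \ge \varepsilon p^\alpha$ forces $\sum r_i \ge p^{\alpha}/2$. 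I would then iterate by taking coset spaces by $\langle t_1^{s}\rangle$ for divisors $s$ of $r_1$, choosing $s$ so that the image order of $\lambda_p^{s}$ is just above $C$. This keeps the quotient of the form required by \cref{normal_subgroup_contain_Zp} while making the abelian part tiny compared with $p^\alpha$, and it is the form in which I expect the contradiction to close.
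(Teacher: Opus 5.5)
Your setup agrees with the paper's up to the point where you obtain $\mathbb{Z}_p\subseteq N\subseteq \bar S^{\lfloor\gamma^\delta\rfloor}$ inside the normal quotient $F_p$. However, the argument is never closed, and none of the routes in your final paragraph works as stated.
\begin{itemize}
\item Bounding $\gamma=\operatorname{diam}(F_p)$ itself fails because of the abelian part, as you noticed.
\item Your proposed coset space of index about $p\,r_1$ still carries a cyclic part of size $r_1$, which leaves the case $r_1\ge p^{c}$ open.
\item Passing to $\langle t_1^{s}\rangle$ with $s\mid r_1$, $s<r_1$, does not produce normal subgroups of $G$, so \cref{matt_lemma} and \cref{normal_subgroup_contain_Zp} no longer apply. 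Passing instead to subgroups of unbounded index $s$ loses uniform control of the u.c.\ constant.
\end{itemize}
The inequality $\sum_i r_i\ge p^\alpha/2$ is not a contradiction by itself.

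The missing idea is to decouple the two roles. The large factor $\mathbb{Z}_{r_1}$ is needed only inside $F_p$, to force $\mathbb{Z}_p\subseteq N$ via \cref{normal_subgroup_contain_Zp}. The contradiction should then be found in a much smaller coset space that forgets the $\mathbb{Z}^m$-part entirely.

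Write $A_p$ for your index-$p$ invariant subgroup (your $H_p$), and set $H'=A_p\rtimes\mathbb{Z}^m$. This is a subgroup of index $p$ whose cosets are the $kH'$ with $k\in K/A_p\cong\mathbb{Z}_p$. Since $A_p\rtimes\langle t_1^{r_1},\dots,t_m^{r_m}\rangle\le H'$, the inclusion $\mathbb{Z}_p\subseteq \bar S^{\lfloor\gamma^\delta\rfloor}$ in $F_p$ lifts to $S^{\lfloor\gamma^\delta\rfloor}H'=G$. Hence $\operatorname{diam}_S(G/H')\le\gamma^\delta$.

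Now use only the trivial bound $\gamma\le|F_p|=p\prod_i r_i\le p^{m+1}$; note it is $p^{m+1}$, not $p^m$ as you wrote. Take your $\epsilon=\alpha/2$ and $\delta=\epsilon/(m+1)$. Then $\operatorname{diam}_S(G/H')\le p^{\epsilon}=[G:H']^{\alpha/2}$, which is less than $c\,[G:H']^{\alpha}$ for large $p$ and contradicts u.c.$(\alpha)$. This is exactly how the paper finishes. You never need to control $\sum_i r_i$, or relate $r_1$ to $p$ beyond $r_1>C_{\epsilon,\delta}$.
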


\begin{proof}
 We will prove the statement via contradiction. We begin by defining a sequence of quotients of $G$. Write $K = K(M_1, \dots, M_m)$. Without loss of generality, assume that $M_1$ has an eigenvalue that is not a root of unity. According to \cref{mult_char_polys}, there exists an infinite set of primes $\mathcal{P}$ such that $\mathcal{P} \cap D = \varnothing$ and for each $p \in \mathcal{P}$, the characteristic polynomial $\operatorname{char}(M_i)$ splits over $\mathbb{F}_p$ with no zero roots. For each $p \in \mathcal{P}$, let $\lambda_{p,1}$ be an eigenvalue of $\tilde{M}_1$ with the largest multiplicative order over $\mathbb{F}_p$. Let $A_p$ be a subgroup of $K$ with index $p$, as defined in \cref{index_p_subgroup_rat} using the matrices $\{M_i\}$ and the eigenvalue $\lambda_{p,1}$. Let $\lambda_{p,i}$ be the corresponding eigenvalue of $\tilde{M}_i$ associated with $\tilde{v}_p$; in particular, each $\lambda_{p,i}$ is non-zero. Let $r_{p,1} = \operatorname{ord}_p(\lambda_{p,1})$ and choose $r_{p,2}, \dots, r_{p,m} \in \{1, \dots, p-1\}$ such that $\lambda_{p,i}^{r_{p,i}} \equiv 1 \pmod{p}$. For example, for $i = 2, \dots, m$, one could take $r_{p,i} = p-1$. We will consider the following normal subgroup of $G$ and its corresponding quotient:
\[ H_p = A_p \rtimes \bigoplus_{i=1}^m r_{p,i} \mathbb{Z} \trianglelefteq G \quad \text{and} \quad \Gamma_p \coloneqq \frac{G}{H_p} \cong \mathbb{Z}/p\mathbb{Z} \rtimes \bigoplus_{i=1}^m \mathbb{Z}/r_{p,i}\mathbb{Z}. \]
Let $S$ be a generating set of $G$. Suppose $G$ has uniformly $\alpha$-almost flat coset spaces, i.e., there exist $\alpha \in (0,1]$ and $c>0$ such that
\begin{equation} \label{contradiction_uniform}
\operatorname{diam}(G/H) \ge c|G:H|^\alpha
\end{equation}
for all finite index subgroups $H$ of $G$. Fix $\epsilon < \frac{\alpha}{2}$ and $\delta = \frac{\epsilon}{m+1}$. With a slight abuse of notation, let $\bar{S}$ be the image of the generating set $S$ in $\Gamma_p$. For sufficiently large $p$, we have $\operatorname{diam}(\Gamma_p) \ge c|\Gamma_p|^\alpha \ge |\Gamma_p|^{\alpha/2}$. We will now apply \cref{matt_lemma}. Again, by taking $p$ large enough, we have $\operatorname{diam}(\Gamma_p) \ge D_{\epsilon,\delta}$, and so there exists $N_p \trianglelefteq \Gamma_p$ such that:
\begin{enumerate}[(i)]
    \item $N_p \subseteq \bar{S}^{\lfloor \operatorname{diam}(\Gamma_p)^\delta \rfloor}$;
    \item $\frac{\Gamma_p}{N_p}$ has a nilpotent subgroup with index bounded by $C_{\epsilon,\delta}$.
\end{enumerate}
By \cref{diverges_mul_order_rat}, for sufficiently large $p$, we have $r_{p,1} > C_{\epsilon,\delta}$. Therefore, by \cref{normal_subgroup_contain_Zp}, we have $N_p \supseteq \mathbb{Z}/p\mathbb{Z} \times \{0\}$. Moreover, we trivially have $\operatorname{diam}(\Gamma_p) \le |G:H_p| = p \cdot r_{p,1} \cdots r_{p,m} \le p^{m+1}$. Hence, $$\operatorname{diam}(\Gamma_p)^\delta \le p^{(m+1)(\frac{\epsilon}{m+1})} = p^\epsilon.$$ By the choice of $N_p$, we have $\mathbb{Z}/p\mathbb{Z} \times \{0\} \subseteq N_p \subseteq \bar{S}^{\lfloor \operatorname{diam}(\Gamma_p)^\delta \rfloor} \subseteq \bar{S}^{p^\epsilon}$. Finally, taking $H' = A_p \rtimes \mathbb{Z}^m$, we see that $S^{p^\epsilon} H' = G$. Therefore, $$\operatorname{diam}(G/H') \le p^\epsilon = |G:H'|^\epsilon < c|G:H'|^\alpha$$ for large $p$, which contradicts \eqref{contradiction_uniform}.
\end{proof}

\subsection{Equivalent conditions for $K(M_1, \dots, M_m) \rtimes \mathbb{Z}^m$ with u.c.\ property}
Our next aim is to prove the following:

\begin{prop}\label{K_by_zm}

Let $G = K(M_1, \dots, M_m) \rtimes \mathbb{Z}^m$, where each $M_i \in \mathrm{GL}_n(\mathbb{Q})$. Then:
\begin{enumerate}[(i)]

\item If all eigenvalues of each $M_i$ are $1$, then $G$ is nilpotent of class at most $n$.

\item If all eigenvalues of each $M_i$ are roots of unity, then $G$ is virtually nilpotent.

\end{enumerate}

\end{prop}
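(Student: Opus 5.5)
For part (i), I would build a $G$-invariant flag inside $K := K(M_1,\dots,M_m)$. The $M_i$ commute and all their eigenvalues equal $1$, so the $N_i := M_i - I$ are pairwise commuting nilpotent matrices over $\mathbb{Q}$. By \cref{commute_basis}(ii) applied over $\mathbb{Q}$ (the characteristic polynomials $(x-1)^n$ split), there is a flag of subspaces
\[
0 = V_0 < V_1 < \cdots < V_n = \mathbb{Q}^n
\]
with each $M_i$ preserving every $V_j$ and acting trivially on $V_j/V_{j-1}$. Equivalently, $N_i V_j \subseteq V_{j-1}$.

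Set $K_j := K \cap V_j$. Each $K_j$ is normal in $G$, since $K$ is abelian and $\mathbb{Z}^m$ preserves $V_j$. Moreover $[G, K_j] \subseteq K_{j-1}$: for $v \in K_j$,
\[
[t_i, v] = M_i v - v = N_i v \in K \cap V_{j-1},
\]
and general elements of $\mathbb{Z}^m$ act by products $M^{\mathbf k}$ with $M^{\mathbf k} - I$ in the ideal generated by the $N_i$, which also maps $V_j$ into $V_{j-1}$. Commutators within $K$ are trivial. Hence the series
\[
1 = K_0 \le K_1 \le \cdots \le K_n = K \le G
\]
is central, because $G/K \cong \mathbb{Z}^m$ is abelian. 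This already shows nilpotency, with class at most $n+1$.

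To get the sharper bound $n$, I would compute the lower central series directly. We have $\gamma_2(G) = [G,G]$, which is generated by the commutators $[t_i, v]$ and their conjugates, because $\mathbb{Z}^m$ is abelian and the commutator of two elements $(v,\mathbf{k})$ reduces to $\mathbb{Z}^m$-translates of elements $(M^{\mathbf k}-I)v$. Thus $\gamma_2(G) \subseteq K_{n-1}$. Inductively, $\gamma_{j+1}(G) \subseteq K_{n-j}$, so $\gamma_{n+1}(G) \subseteq K_0 = 1$, giving class at most $n$.

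For part (ii), if every eigenvalue of every $M_i$ is a root of unity, choose $N$ with $\lambda^N = 1$ for all such eigenvalues, for instance $N$ the lcm of their orders. Then every $M_i^N$ is unipotent and the $M_i^N$ still commute. Consider the finite-index subgroup
\[
G' := K \rtimes N\mathbb{Z}^m \le G,
\]
of index $N^m$. This is the semidirect product of $K$ by $\mathbb{Z}^m$ acting through $M_1^N,\dots,M_m^N$. The group $K$ need not equal $K(M_1^N,\dots,M_m^N)$, so rather than invoking (i) formally I would rerun the argument of (i): it only used that $K$ is a $\mathbb{Z}^m$-invariant subgroup of $\mathbb{Q}^n$ on which the generators act unipotently and commute. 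This yields that $G'$ is nilpotent of class at most $n$, so $G$ is virtually nilpotent.

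The only step needing care is the claim that $M^{\mathbf{k}} - I$ lowers the flag for negative exponents too. This holds because $M_i^{-1} = (I+N_i)^{-1}$ is a polynomial in $N_i$ with constant term $I$, so $M_i^{-1} - I$ also lies in the ideal generated by the $N_i$.
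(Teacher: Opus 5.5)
Your proof is correct and is essentially the paper's argument. The paper describes $\gamma_{i+1}(G)$ as generated by the elements $(M_{h_i}-I)\cdots(M_{h_1}-I)a$ and kills them using the fact that a product of $n$ commuting nilpotent matrices vanishes; you encode the same fact as a common flag from \cref{commute_basis}, which you intersect with $K$ to obtain a central series. Your remark in (ii) is also correct and worth keeping: $K$ need not equal $K(M_1^N,\dots,M_m^N)$, so you rerun (i) for $K \rtimes N\mathbb{Z}^m$ rather than cite it, a point the paper passes over with ``follows immediately''.
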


One way to prove this is by combining \cref{Finite_generation_criteria} and \citep[Lemma 9.18]{Guo_Tointon}. We will provide a straightforward, stand-alone proof. Let $G$ be a group. For $x_1, \dots, x_n \in G$, the left-normed $n$-fold commutator is defined by:

$$[x_1, \dots, x_n] := [[x_1, \dots, x_{n-1}], x_n] \text{ for } n \ge 2.$$

\begin{lem} \label{thm:lower_central_series}

Let $G = A \rtimes H$ be a semidirect product where both $A$ and $H$ are abelian. For all $i \ge 1$, the $(i+1)$-th term of the lower central series of $G$ is given by $G_{i+1} = \langle [a, h_1, \dots, h_i] \mid a \in A,\ h_j \in H \rangle \subseteq A$.

\end{lem}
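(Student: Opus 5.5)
We argue by induction on $i$, using the commutator identities in $G=A\rtimes H$ with $A$ abelian and normal and $H$ abelian. Throughout, write $A$ additively inside $G$ when convenient, so that conjugation by $h\in H$ is the automorphism $a\mapsto h\cdot a$. The key elementary computation is that for $a\in A$ and $g=bh\in G$ (with $b\in A$, $h\in H$),
\[
[a,g]=a^{-1}g^{-1}ag=[a,h],
\]
because $A$ is abelian and conjugation by $b$ is trivial on $A$. Moreover, $[a,h]\in A$ since $A$ is normal. Let $L_{i+1}:=\langle [a,h_1,\dots,h_i]\mid a\in A,\ h_j\in H\rangle$; each generator lies in $A$, so $L_{i+1}\subseteq A$. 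Note also that $L_{i+1}$ is $H$-invariant: for $h\in H$ we have $[a,h_1,\dots,h_i]^h=[a^h,h_1,\dots,h_i]$, since $H$ is abelian and so conjugation by $h$ fixes each $h_j$. Hence $L_{i+1}$ is normal in $G$.

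For the base case $i=1$, we need $G_2=[G,G]=L_2$. Take $g_1=a_1h_1$ and $g_2=a_2h_2$. Expanding $[g_1,g_2]$ with the identities $[xy,z]=[x,z]^y[y,z]$ and $[x,yz]=[x,z][x,y]^z$ gives a product of the terms $[a_1,a_2]=1$, $[h_1,h_2]=1$, and conjugates of elements $[a_1,h_2]$ and $[h_1,a_2]=[a_2,h_1]^{-1}$. All of these lie in $L_2$ because $L_2$ is normal. Hence $G_2\subseteq L_2$; the reverse inclusion is trivial.

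For the inductive step, assume $G_{i+1}=L_{i+1}\subseteq A$. Then $G_{i+2}=[G_{i+1},G]$ is generated (as a normal subgroup, and in fact as a subgroup) by the elements $[x,g]$ with $x\in L_{i+1}$ and $g\in G$. By the key computation, $[x,g]=[x,h]$ where $h$ is the $H$-component of $g$.

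It remains to check that $[x,h]\in L_{i+2}$ for an arbitrary element $x=\prod_k c_k^{\pm1}$ of $L_{i+1}$, where each $c_k=[a_k,h_1^{(k)},\dots,h_i^{(k)}]$. This is the only point needing care. Because $A$ is abelian, the map $x\mapsto[x,h]=x^{-1}x^h$ is a homomorphism on $A$. Therefore $[x,h]=\prod_k[c_k,h]^{\pm1}$, and each $[c_k,h]$ is a generator of $L_{i+2}$. This gives $G_{i+2}\subseteq L_{i+2}$. The reverse inclusion holds since each generator of $L_{i+2}$ is $[c,h]$ with $c\in G_{i+1}$, completing the induction.
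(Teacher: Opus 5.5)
Your proof is correct and follows essentially the same route as the paper: induction on $i$, using that $A$ is abelian so commutators with elements of $A$ collapse (only the $H$-component of $g$ matters), and that $H$ is abelian so the generating sets are closed under conjugation. Your explicit use of the fact that $x\mapsto[x,h]=x^{-1}x^{h}$ is a homomorphism on $A$ also justifies a step the paper leaves implicit, namely passing from $[x,h]$ with arbitrary $x\in G_{i+1}$ to the generators $[[a,h_1,\dots,h_i],h]$.
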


\begin{proof}

For convenience of notation, we use multiplicative notation for $G, A,$ and $H$ throughout this proof; moreover, we may not explicitly introduce a variable when it is clear where it belongs. We proceed by induction on $i$. For the base case ($i = 1$), $G_2 = [G, G]$ is the normal closure of commutators of the generators of $G$. Since $G = \langle A \cup H \rangle$ with $A$ and $H$ abelian, $G_2 = \langle [a, h] \mid a \in A,\ h \in H \rangle^G$. For $g = a'k \in G$, we have $[a, h]^g = [a, h]^k$ since $A$ is abelian and $[a, h] \in A$. Using $[a, h]^k = [a^k, h^k]$ and $h^k = h$, we obtain $[a, h]^k = [a^k, h] \in [A, H]$. Hence the generating set $\{[a, h] \mid a \in A,\ h \in H\}$ is closed under taking $G$-conjugates, and therefore $G_2 = \langle [a, h] \mid a \in A,\ h \in H \rangle$.

For the inductive step, assume that $G_{i+1} = \langle [a, h_1, \dots, h_i] \mid a \in A,\ h_j \in H \rangle \subseteq A$. Then $G_{i+2} = [G_{i+1}, G] = \langle [x, g] \mid x \in G_{i+1},\ g \in A \cup H \rangle^G$. Since $A$ is abelian, $[x, a'] = 1$ for all $a' \in A$, and hence $G_{i+2} = \langle [x, h] \mid x \in G_{i+1},\ h \in H \rangle^G$. Moreover, given $k \in H$, $[x, h]^k = [x^k, h^k] = [x^k, h]$, so the generating set $\{[x, h] \mid x \in G_{i+1},\ h \in H\}$ is closed under taking $G$-conjugates. Hence, we obtain:

$$G_{i+2} = \langle [[a, h_1, \dots, h_i], h_{i+1}] \mid a \in A,\ h_j \in H \rangle = \langle [a, h_1, \dots, h_{i+1}] \mid a \in A,\ h_j \in H \rangle.$$

This completes the induction.

\end{proof}

\begin{lem} \label{product_of_n_nilpootent_matrices}

Let $Q_1, \dots, Q_n$ be a set of pairwise commuting matrices with all eigenvalues $0$. Then $\prod_{i=1}^n Q_i = 0$.

\end{lem}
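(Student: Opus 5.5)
The plan is to simultaneously triangularise the matrices and then track the filtration by the flag. The $Q_i$ are $n\times n$ matrices over a field $\F$ (here $\mathbb{Q}$, or any field over which we work). Since each $Q_i$ has all eigenvalues $0$, its characteristic polynomial is $x^n$, which splits over $\F$. Commuting linear maps preserve each other's kernels, so the argument of \cref{commute_basis}(i), applied with $\mu_1 = 0$, produces a common eigenvector $\underline{v}_1$ with $Q_i \underline{v}_1 = 0$ for all $i$. Note that \cref{commute_basis} is stated for automorphisms, and the $Q_i$ are not invertible. However, its proof uses only that the characteristic polynomials split and that the maps commute, so the same induction applies verbatim. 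Passing to the quotient by $\langle \underline{v}_1\rangle$ as in part (ii), we obtain a basis $\mathcal{B} = \{\underline{v}_1,\dots,\underline{v}_n\}$ in which every $[Q_i]_{\mathcal{B}}$ is upper triangular. Its diagonal entries are eigenvalues of $Q_i$, so they are all $0$, and each $[Q_i]_{\mathcal B}$ is strictly upper triangular.

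Next set $V_j = \langle \underline{v}_1,\dots,\underline{v}_j\rangle$ for $0\le j\le n$, with $V_0 = \{0\}$. Strict upper triangularity means exactly that $Q_i V_j \subseteq V_{j-1}$ for every $i$ and every $j\ge 1$. Applying the product $Q_1 Q_2\cdots Q_n$ to $V_n = \F^n$, each factor lowers the index by one. Hence
\[
Q_1\cdots Q_n \F^n \subseteq V_0 = \{0\}.
\]
The order of the factors is irrelevant here, and in fact commutativity is used only to obtain the common flag. Therefore $\prod_{i=1}^n Q_i = 0$.

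The main obstacle is the first step: justifying simultaneous strict triangularisation for singular matrices, since \cref{commute_basis} was phrased for automorphisms. I would handle it by repeating the short induction. The common kernel $\bigcap_i \ker Q_i$ is nonzero, because each $Q_i$ preserves $\bigcap_{i<k}\ker Q_i$ and is nilpotent on it, so it has a nonzero kernel there. The induced maps on the quotient by $\underline{v}_1$ are again commuting and nilpotent, and induction on $n$ finishes the construction of the flag.
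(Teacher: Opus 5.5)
Your proof is correct, but it takes a different route from the paper.

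\textbf{The paper's route (kernel growth).} The paper uses the following fact. If $S$ is nilpotent, $T \neq 0$ and $ST = TS$, then $S$ preserves the nonzero subspace $T(V)$ and is nilpotent on it. Hence some $Tv \neq 0$ satisfies $STv = 0$, which gives $\ker T \subsetneqq \ker(ST)$. Applying this to the partial products $Q_k \cdots Q_1$ shows that their kernels strictly increase until the product vanishes. So $n$ factors kill an $n$-dimensional space.

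\textbf{Your route (common flag).} You build a single flag $V_0 \subset V_1 \subset \cdots \subset V_n$ with $Q_i V_j \subseteq V_{j-1}$ for all $i,j$, by rerunning the induction of \cref{commute_basis}. You are right that its proof never uses invertibility, so it applies to the singular $Q_i$.

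\textbf{Comparison.} Both arguments rest on the same observation: a nilpotent map has a nonzero kernel on any nonzero invariant subspace. The paper applies it to images of partial products; you apply it to common kernels. The paper's version is shorter and needs no basis or quotient construction. However, it uses commutativity at every step, since each new factor must commute with the current partial product. Yours puts all use of commutativity into building the flag. After that, the vanishing is a pure flag-shifting statement that holds for any family sharing a strictly triangular flag, commuting or not. It also reuses the triangularisation lemma the paper already proves.
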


\begin{proof}

The result follows from this standard fact: Let $S$ and $T$ be two commuting linear maps on $V$. If all eigenvalues of $S$ are $0$ and $T$ is a non-zero linear map, then $\ker(T) \subsetneqq \ker(ST)$ (unless $\ker(T)=V$).

\end{proof}
\begin{proof} [Proof of \cref{K_by_zm}]
    We prove (i); (ii) follows immediately. Take $h \in \mathbb{Z}^m$. The action of $h$ on $a \in A$ is defined by $a^h = M_h a$ for some matrix $M_h$ with all eigenvalues equal to $1$. 

In additive notation, the commutator $[a, h] = a^h - a$ translates to the linear map $(M_h - I)a$. By induction, the $(i+1)$-fold commutator from \cref{thm:lower_central_series} corresponds to the product of these matrices:
\begin{equation} \label{product_commutator}
[a, h_1, h_2, \dots, h_i] = (M_{h_i} - I)(M_{h_{i-1}} - I) \dots (M_{h_1} - I)a.
\end{equation}
Part (i) of the statement follows from \eqref{product_commutator} and \cref{product_of_n_nilpootent_matrices}.
\end{proof}

\begin{cor} \label{A_by_Z_finite_rank}
The following statements about $G = K(M_1, \dots, M_m) \rtimes \mathbb{Z}^m$ are equivalent:
\begin{enumerate}[(i)]
    \item all eigenvalues of each $M_i$ are roots of unity;
    \item the group $G$ is virtually nilpotent;
    \item the group $G$ has polynomial growth;
    \item the group $G$ has uniformly almost flat coset spaces.
\end{enumerate}
\end{cor}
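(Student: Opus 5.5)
We establish the equivalences by running the cycle (i) $\Rightarrow$ (ii) $\Rightarrow$ (iii) $\Rightarrow$ (iv) $\Rightarrow$ (i). Each arrow is essentially an earlier result of the paper or a classical theorem.

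\textbf{(i) $\Rightarrow$ (ii).} This is exactly \cref{K_by_zm}(ii). Its content is a reduction to part (i) of that proposition. If every eigenvalue of every $M_i$ is a root of unity, choose $N$ with $\lambda^N=1$ for every eigenvalue $\lambda$ of every $M_i$. Then $M_1^N,\dots,M_m^N$ commute and have all eigenvalues equal to $1$. The subgroup $K\rtimes N\mathbb{Z}^m$ has finite index $N^m$ in $G$, and its abelian normal subgroup $K$ is acted on by unipotent commuting matrices. By \eqref{product_commutator} together with \cref{product_of_n_nilpootent_matrices}, applied to the pairwise commuting matrices $M_h-I$ (which have all eigenvalues $0$), every $(n+1)$-fold commutator vanishes. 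By \cref{thm:lower_central_series}, this means the subgroup is nilpotent of class at most $n$. Hence $G$ is virtually nilpotent.

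\textbf{(ii) $\Rightarrow$ (iii).} A finitely generated virtually nilpotent group has polynomial growth; this is the classical Bass--Guivarc'h/Wolf direction. Note that $G$ is finitely generated because $K$ is generated as a $\mathbb{Z}^m$-module by $e_1,\dots,e_n$.

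\textbf{(iii) $\Rightarrow$ (iv).} If $G$ has polynomial growth of degree $d$, then \cref{lem:poly.growth} gives u.c.$(1/d)$.

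\textbf{(iv) $\Rightarrow$ (i).} This is precisely \cref{char_K_Z_m}.

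Granting \cref{K_by_zm} and \cref{char_K_Z_m}, the corollary is formal. The only real difficulty lies upstream, in \cref{char_K_Z_m} with its use of \cref{matt_lemma}. A minor point to check is that $G$ is infinite, which the u.c.\ framework requires; this holds since $\mathbb{Z}^m$ is a quotient, at least when $m\ge1$.
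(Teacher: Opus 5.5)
Your proposal is correct and matches the paper's proof: it uses the same cycle (i) $\Rightarrow$ (ii) $\Rightarrow$ (iii) $\Rightarrow$ (iv) $\Rightarrow$ (i), via \cref{K_by_zm}, polynomial growth of virtually nilpotent groups, \cref{lem:poly.growth} and \cref{char_K_Z_m}. The only difference is cosmetic: the paper attributes (ii) $\Leftrightarrow$ (iii) to Gromov's theorem, whereas you rightly note that the cycle needs only the easy direction (Wolf/Bass--Guivarc'h).
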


\begin{proof}
The implication (i) $\Rightarrow$ (ii) is proved in \cref{K_by_zm}. The equivalence (ii) $\Leftrightarrow$ (iii) follows from Gromov's theorem. The implication (iii) $\Rightarrow$ (iv) is proved in \cref{poly_growth_implies_uniform_D_a}. Finally, the implication (iv) $\Rightarrow$ (i) is proved in \cref{char_K_Z_m}.
\end{proof}

\subsection{The Abelian-by-Nilpotent Case with a Split Extension}
 The conclusion of this subsection follow from the results cited in the proof of \cref{uc_implies_polycyclic}, we include it to provide an intuition for how the just-non-polycyclic (JNP) property simplifies the group structure. Recall that a group $G$ is a \textit{JNP group} if $G$ is not polycyclic, but every proper quotient of $G$ is polycyclic. We will explore more the application of JNP groups in \cref{reduction_non_polycyclic_structure}.

\begin{lem}
Suppose $G$ is a JNP group that is abelian-by-nilpotent. Then $\text{Fit}(G)$ is abelian and $G/\text{Fit}(G)$ is nilpotent.
\end{lem}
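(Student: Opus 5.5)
The plan is to use the defining property of a JNP group directly: every nontrivial normal subgroup $M$ of $G$ gives a polycyclic quotient $G/M$. Fix an abelian normal subgroup $A \trianglelefteq G$ with $G/A$ nilpotent. I assume, as throughout the paper, that $G$ is finitely generated.

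\textbf{Step 1: the quotient.} Since $A$ is an abelian normal subgroup, it is in particular nilpotent and normal, so $A \le \operatorname{Fit}(G)$. Hence $G/\operatorname{Fit}(G)$ is a quotient of the nilpotent group $G/A$, and so it is nilpotent. This gives the second conclusion at once.

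\textbf{Step 2: every nilpotent normal subgroup is abelian.} Let $N \trianglelefteq G$ be nilpotent, and suppose for contradiction that $N$ is non-abelian, so that $N' = [N,N]$ is a nontrivial normal subgroup of $G$.
\begin{enumerate}[(a)]
\item By the JNP property, $G/N'$ is polycyclic. Its subgroup $N/N'$ is therefore finitely generated.
\item A nilpotent group whose abelianisation is finitely generated is itself finitely generated. This is the standard fact that the lower central factors are images of tensor powers of $N/N'$. So $N$ is a finitely generated nilpotent group, hence polycyclic.
\item Since $N \neq 1$, the JNP property again shows that $G/N$ is polycyclic.
\item The class of polycyclic groups is closed under extensions, so $G$ would be polycyclic. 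This contradicts the definition of a JNP group.
\end{enumerate}
Hence $N$ is abelian.

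\textbf{Step 3: passing to $\operatorname{Fit}(G)$.} The Fitting subgroup is the union of the directed family of finite products of nilpotent normal subgroups. By Fitting's theorem, each such finite product is again a nilpotent normal subgroup. By Step 2, each of these products is abelian. A directed union of abelian subgroups is abelian, so $\operatorname{Fit}(G)$ is abelian.

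The only point needing care is Step 2(b): a nilpotent group is finitely generated as soon as its abelianisation is. I would cite this as the standard fact that each factor $\gamma_i(N)/\gamma_{i+1}(N)$ is an image of $(N/N')^{\otimes i}$. Everything else is formal from the JNP definition and the fact that polycyclic groups are closed under extensions.
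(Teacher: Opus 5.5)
Your proof is correct, and it takes a somewhat different route from the paper.

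The paper first cites Hall's result that $\operatorname{Fit}(G)$ is nilpotent when $G$ is a finitely generated abelian-by-nilpotent group. It then observes that $\operatorname{Fit}(G)$ cannot be finitely generated, since $G/\operatorname{Fit}(G)$ is polycyclic while $G$ is not. Hence the abelianisation of $\operatorname{Fit}(G)$ is infinitely generated, and the JNP property forces $[\operatorname{Fit}(G),\operatorname{Fit}(G)]=1$.

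You run essentially the same abelianisation argument, but on an arbitrary nilpotent normal subgroup $N$, and you get polycyclicity of $G/N$ directly from the JNP property. You then pass to $\operatorname{Fit}(G)$ using Fitting's theorem and a directed-union argument.

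What your route buys is independence from Hall's theorem. Your Steps 2 and 3 use neither finite generation nor the abelian-by-nilpotent hypothesis. They therefore show that the Fitting subgroup of any JNP group is abelian, and the abelian-by-nilpotent hypothesis is needed only for the nilpotency of $G/\operatorname{Fit}(G)$ in Step 1. The paper's version is shorter once Hall's result is granted, but it rests on that nontrivial external input and on finite generation of $G$.
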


\begin{proof}
Since $G$ is abelian-by-nilpotent, $\text{Fit}(G)$ is nilpotent by a result of Hall \citep{peter_hall_finiteness_certain_soluble_groups}. Furthermore, the fact that $G$ is an extension of an abelian group by a nilpotent group implies that $G/\text{Fit}(G)$ is nilpotent. As $G$ is JNP, it is by definition not polycyclic; therefore, $\text{Fit}(G)$ must be infinitely generated. We observe that the abelianization of an infinitely generated nilpotent group is itself infinitely generated. Thus, the quotient $\text{Fit}(G)/[\text{Fit}(G), \text{Fit}(G)]$ is an infinitely generated abelian group. Since $G$ is JNP, every proper quotient of $G$ is polycyclic. If the derived subgroup $[\text{Fit}(G), \text{Fit}(G)]$ were non-trivial, then the quotient $G/[\text{Fit}(G), \text{Fit}(G)]$ would be polycyclic. However, this quotient contains $\text{Fit}(G)/[\text{Fit}(G), \text{Fit}(G)]$, which is infinitely generated, a contradiction. Consequently, we must have $[\text{Fit}(G), \text{Fit}(G)] = 1$, which proves that $\text{Fit}(G)$ is abelian.
\end{proof}

\begin{lem} \label{commute_implies_root_of_unity}
Let $A,B\in\GL(n,\C)$ and $C=[A,B]$. If $AC=CA$ and $BC=CB$, then every eigenvalue of $C$ is a root of unity.
\end{lem}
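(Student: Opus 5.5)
We have $C = ABA^{-1}B^{-1}$, so $ABA^{-1} = CB$. The plan is to compare spectra of conjugate matrices, using that $C$ commutes with $B$ and so can be simultaneously triangularised with it.

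Since $A$, $B$ and $C$ are all invertible over $\C$, and $C$ commutes with $B$, the pair $(B, C)$ can be simultaneously upper triangularised. This is \cref{commute_basis}(ii) over the algebraically closed field $\C$, where every characteristic polynomial splits. Let $\beta_1,\dots,\beta_n$ and $\gamma_1,\dots,\gamma_n$ be the diagonal entries of $B$ and $C$ in such a basis. Then $CB$ is upper triangular with diagonal entries $\gamma_i\beta_i$. On the other hand, $CB = ABA^{-1}$ is conjugate to $B$, so the multisets $\{\gamma_i\beta_i\}$ and $\{\beta_i\}$ coincide.

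Iterating this observation is the natural first attempt. Because $C$ commutes with $A$, conjugating the relation gives
\[
A^kBA^{-k} = C^kB
\]
for every $k\ge 1$. The proof is by induction: $A^{k+1}BA^{-k-1} = A(C^kB)A^{-1} = C^k\,ABA^{-1} = C^{k+1}B$. Hence the multiset $\{\gamma_i^k\beta_i\}$ equals $\{\beta_i\}$ for every $k$, since $C^kB$ is triangular in the same basis. Summing over the multiset, or better using the multiplicative structure, gives constraints, but I would finish differently.

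The finish uses the fact that the map $\{1,\dots,n\}\ni i\mapsto$ (the index $j$ with $\beta_j = \gamma_i^k\beta_i$) is not canonical. To avoid that, group the indices by generalised eigenspaces. Since $C$ commutes with $B$, $C$ preserves each generalised eigenspace $V_\beta$ of $B$. Restrict to the subspace where both $B$ and $C$ act with single eigenvalues $\beta$ and $\gamma$, which is possible by the commuting decomposition. On it, $C^kB$ has the single eigenvalue $\gamma^k\beta$. Therefore $\gamma^k\beta$ lies in the finite set $\mathrm{spec}(B)$ for all $k\ge 1$. As $\beta\ne 0$, the values $\gamma^k$ range over the finite set $\mathrm{spec}(B)/\beta$. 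Thus $\gamma^k = \gamma^l$ for some $k<l$, and since $\gamma\neq0$ this gives $\gamma^{l-k}=1$. Every eigenvalue $\gamma$ of $C$ arises in this way, because the joint generalised eigenspaces of $(B, C)$ exhaust $\C^n$.

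The main care point is the claim that on a joint generalised eigenspace the product $C^kB$ has the single eigenvalue $\gamma^k\beta$. This follows from simultaneous triangularisation of the restrictions via \cref{commute_basis}(ii) applied to that invariant subspace. A secondary point is checking that $C^k B$ is indeed conjugate to $B$, which uses only $AC=CA$. The hypothesis $BC=CB$ is used solely for the joint decomposition.
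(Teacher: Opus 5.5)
Your argument is correct, but it takes a different route from the paper.

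The paper restricts to the eigenspace $E_\lambda=\ker(C-\lambda I)$. This space is invariant under $A$ and $B$ because both commute with $C$. On it, $\lambda I=C|_{E_\lambda}=[A|_{E_\lambda},B|_{E_\lambda}]$, and taking determinants gives $\lambda^{\dim E_\lambda}=1$ at once.

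You instead use $AC=CA$ to obtain the conjugacy $A^kBA^{-k}=C^kB$, and $BC=CB$ to pair each eigenvalue $\gamma$ of $C$ with an eigenvalue $\beta$ of $B$. Then $\gamma^k\beta$ never leaves the finite set $\operatorname{spec}(B)$, and pigeonhole forces $\gamma$ to be a root of unity.

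The determinant argument is shorter and needs no splitting or simultaneous triangularisation, so it works verbatim over any field. Yours is a spectral argument that avoids determinants. Both bound the order of $\gamma$ by $n$; for yours, run $k=0,\dots,n$.

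Your finish can be streamlined. The multiset paragraph and the generalised-eigenspace decomposition are unnecessary. By \cref{commute_basis}(i), any eigenvalue $\gamma$ of $C$ has an eigenvector $v$ with $Bv=\beta v$. Then $C^kBv=\gamma^k\beta v$ gives $\gamma^k\beta\in\operatorname{spec}(C^kB)=\operatorname{spec}(B)$ directly, which disposes of your ``main care point''.

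Finally, your choice $C=ABA^{-1}B^{-1}$ is harmless. When $C$ commutes with $A$ and $B$, one has $A^{-1}B^{-1}AB=(AB)^{-1}C(AB)=C$, so the two commutator conventions agree.
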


\begin{proof}
For an eigenvalue $\lambda$ of $C$, the eigenspace $E_\lambda=\ker(C-\lambda I)$ is $A$- and $B$-invariant. Hence on $E_\lambda$ we have
$[A|_{E_\lambda},B|_{E_\lambda}]=C|_{E_\lambda}=\lambda I$.
Taking determinants on both sides gives $\lambda^{\dim E_\lambda}=1$, so $\lambda$ is a root of unity.
\end{proof}

\begin{lem} [Roots of unity in the center of nilpotent groups ]\label{non_abelian_implies_center_has_one_with_root_of_unity}
    Suppose $H$ is a nilpotent group and $\phi : H \to \mathrm{GL}(n,\mathbb{C})$ is a homomorphism.  
    If $H$ has nilpotency class $c>1$, then there exists $x \in H_c \setminus \{1\}$ such that all eigenvalues of $\phi(x)$ are roots of unity.
\end{lem}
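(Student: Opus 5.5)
Since $c>1$, the plan is to find $a\in H_{c-1}$ and $b\in H$ with $x:=[a,b]\neq 1$, and then apply \cref{commute_implies_root_of_unity} to $A=\phi(a)$, $B=\phi(b)$.

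First I will check that such $a,b$ exist. The term $H_c=[H_{c-1},H]$ is nontrivial because the class is exactly $c$. It is generated by commutators $[a,b]$ with $a\in H_{c-1}$ and $b\in H$, so at least one of these generators is nontrivial. Take $x=[a,b]\neq 1$ for such a pair. Then $x\in H_c$, and since $H_{c+1}=1$, the subgroup $H_c$ is central in $H$. In particular $x$ commutes with both $a$ and $b$.

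Applying $\phi$ gives $\phi(x)=[\phi(a),\phi(b)]$, and $\phi(x)$ commutes with $\phi(a)$ and with $\phi(b)$. The matrices $\phi(a),\phi(b)$ lie in $\mathrm{GL}(n,\mathbb{C})$, so \cref{commute_implies_root_of_unity} applies with $A=\phi(a)$, $B=\phi(b)$, $C=\phi(x)$. It shows that every eigenvalue of $\phi(x)$ is a root of unity. Since $x\in H_c\setminus\{1\}$, this is exactly the required element.

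Nothing here is a real obstacle, but two points need care. The commutator convention in \cref{commute_implies_root_of_unity} ($ABA^{-1}B^{-1}$ or $A^{-1}B^{-1}AB$) does not matter: both are commutators of invertible matrices and both have determinant $1$ on each common invariant eigenspace. The statement also only asks that $x\ne 1$ in $H$, not that $\phi(x)\neq I$, so the argument needs no faithfulness assumption on $\phi$. If $\phi(x)=I$, the conclusion holds trivially, since all its eigenvalues equal $1$.
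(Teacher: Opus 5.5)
Your proposal is correct and follows the paper's proof essentially verbatim. Like the paper, you choose $a\in H_{c-1}$ and $b\in H$ with $[a,b]\neq 1$, use $[a,b]\in H_c\subseteq Z(H)$ to see that $\phi([a,b])$ commutes with $\phi(a)$ and $\phi(b)$, and then apply \cref{commute_implies_root_of_unity}; your remarks on the commutator convention and on not needing $\phi$ to be faithful are accurate.
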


\begin{proof}
Suppose $H$ has nilpotency class $c>1$. Then there exist $g \in H_{c-1}$ and $h \in H$ such that $[g,h] \neq 1$. By definition of the lower central series, $[g,h] \in H_c \subseteq Z(H)$. Since $\phi$ is a homomorphism, $\phi([g,h]) = [\phi(g),\phi(h)]$, and $\phi([g,h])$ commutes with both $\phi(g)$ and $\phi(h)$. It follows from \cref{commute_implies_root_of_unity} that all eigenvalues of $\phi([g,h])$ are roots of unity.
\end{proof}

\begin{prop}
Let $G = Fit(G) \rtimes \Gamma$ be a finitely generated JNP group, where $Fit(G)$ is an abelian group of rank $n$ and $\Gamma$ is a torsion-free nilpotent group. For any $x \in Z(\Gamma)  \setminus \{1\}$, let $M_x$ be the matrix representing the action of $x$ on $Fit(G)$. Then $M_x$ cannot have all eigenvalues as roots of unity. It follows that $G/Fit(G)$ is in fact abelian, and $G$ does not have the u.c.\ property.
\end{prop}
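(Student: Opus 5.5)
We argue by contradiction. Write $A = \mathrm{Fit}(G)$, an abelian group of rank $n$ on which $\Gamma$ acts by conjugation, and regard $A \subseteq A\otimes_{\mathbb{Z}}\mathbb{Q} = \mathbb{Q}^n$ as in \cref{auto_of_torsion_free_finite_rank}. Here $A$ is torsion-free; any torsion would be a characteristic subgroup, which is excluded by the JNP hypothesis or by the standing reduction.

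Suppose some $x \in Z(\Gamma)\setminus\{1\}$ has $M_x$ with all eigenvalues roots of unity. Replacing $x$ by a power $x^k$ (still non-trivial and central, since $\Gamma$ is torsion-free), we may assume every eigenvalue of $M_x$ equals $1$, so $U := M_x - I$ is nilpotent. Since $x$ is central, $U$ commutes with every $M_g$, $g\in\Gamma$. It follows that $B := \ker(U)\cap A$ is a $\Gamma$-invariant subgroup of $A$, hence normal in $G$. Moreover $B \neq 0$, since $\ker U\neq 0$ and $A$ spans $\mathbb{Q}^n$.

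The key step is to show that the element $x$ lies in $\mathrm{Fit}(G)$, contradicting $A\cap\Gamma = 1$. Consider $N := A\langle x\rangle$. This subgroup is normal in $G$: $x$ is central in $\Gamma$, and conjugating $x$ by $a\in A$ gives $x\cdot(\text{element of }A)$. By \cref{thm:lower_central_series} applied to $A\rtimes\langle x\rangle$, the lower central terms of $N$ are $U^iA$. By \cref{product_of_n_nilpootent_matrices}, $U^n=0$, so $N$ is nilpotent of class at most $n$. Thus $N$ is a nilpotent normal subgroup of $G$, so $N\le \mathrm{Fit}(G)=A$, which forces $x\in A\cap \Gamma=\{1\}$, a contradiction. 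Hence no such $x$ exists. This argument does not even need JNP; the JNP hypothesis is only used to know that $\mathrm{Fit}(G)$ is abelian and is complemented by $\Gamma$, as in the setup.

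For the consequence, suppose $\Gamma$ is not abelian, of class $c>1$. Apply \cref{non_abelian_implies_center_has_one_with_root_of_unity} to the representation $\Gamma\to\mathrm{GL}(n,\mathbb{Q})\subseteq \mathrm{GL}(n,\mathbb{C})$, $g\mapsto M_g$. This gives $x\in\Gamma_c\setminus\{1\}\subseteq Z(\Gamma)$ with all eigenvalues of $M_x$ roots of unity, contradicting the first part. So $G/\mathrm{Fit}(G)\cong\Gamma$ is abelian, and being torsion-free and finitely generated, $\Gamma\cong\mathbb{Z}^m$.

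Then, by \cref{thm:matrix_rep}, $G\cong K(M_1,\dots,M_m)\rtimes\mathbb{Z}^m$. If $G$ had the u.c.\ property, \cref{char_K_Z_m} (or \cref{A_by_Z_finite_rank}) would make every eigenvalue of every $M_i$ a root of unity, and $G$ would be virtually nilpotent. In particular $G$ would be polycyclic: it is virtually nilpotent and finitely generated, and nilpotent finitely generated groups are polycyclic. This contradicts the JNP assumption that $G$ is not polycyclic. Alternatively, if some $m\ge 1$ gives a non-trivial generator $t_1\in Z(\Gamma)$, then the first part says directly that $M_{t_1}$ has an eigenvalue that is not a root of unity.

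The main obstacle is mild: the reduction to unipotent $M_x$ via powers, together with checking that $A\langle x\rangle$ is normal and nilpotent. Both steps rely on $x$ being central and $\Gamma$ being torsion-free.
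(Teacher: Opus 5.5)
Your proof is correct and follows the paper's argument. You pass to a power $x^L$ so that $M_{x^L}$ is unipotent and observe that $\mathrm{Fit}(G)\langle x^L\rangle$ is then a nilpotent normal subgroup, which forces $x^L\in\mathrm{Fit}(G)\cap\Gamma=1$; you then use \cref{non_abelian_implies_center_has_one_with_root_of_unity} to get $\Gamma$ abelian and conclude via \cref{A_by_Z_finite_rank}, spelling out details the paper leaves implicit, namely the normality of $\mathrm{Fit}(G)\langle x\rangle$ and the fact that virtual nilpotence would contradict $G$ being non-polycyclic.
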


\begin{proof}
 Let $x \in Z(\Gamma)  \setminus \{1\}$. Suppose that $M_x$ is a matrix with all eigenvalues being roots of unity. Let $L \in \mathbb{N}$ be such that $M_x^L$ has all eigenvalues equal to $1$. Following the construction in the proof of \cref{K_by_zm}, we see that the subgroup $\langle Fit(G), x^L \rangle$ is a normal nilpotent group of $G$. This contradicts the property that $Fit(G)$ contains all normal nilpotent subgroups of $G$. Hence, $M_x$ cannot have all eigenvalues being roots of unity. 

By \cref{non_abelian_implies_center_has_one_with_root_of_unity}, it follows that $\Gamma$ is in fact abelian. Consequently, $G$ does not have the u.c.\ property by \cref{A_by_Z_finite_rank}.
\end{proof}

\section{Infinite subgroup rank}

\subsection{Existence of a quotient with infinitely many $p$-torsion elements}
Following \citep{peter_hall_finiteness_certain_soluble_groups}, let $\mathfrak{B}$ denote the class of countable abelian groups $B$ that embed in a finitely generated group $G$ with $G/B$ polycyclic. The conclusions of this subsection also follow from the results cited in the proof of \cref{uc_implies_polycyclic}. Nevertheless, we include this discussion to make explicit the role of Hall’s work in our argument, specifically in reducing the case of infinite subgroup rank to the existence of infinitely many torsion elements of prime order.

We thank Peter Kropholler for pointing out Hall’s work \citep{peter_hall_finiteness_certain_soluble_groups}. While the statement of the relevant result mentioned below in the original paper may not be immediately transparent, the necessary definitions are given in \citep[Section~2]{peter_hall_finiteness_certain_soluble_groups} , in particular in equations (1), (4), (7), and (9).

\begin{thm}[{\citealp[Lemma~5.2]{peter_hall_finiteness_certain_soluble_groups}}] \label{lem:hall_result}
    Let $A \in \mathfrak{B}$. Then there exists a finite set of primes $P$ and a free abelian subgroup $B \le A$ such that the quotient group $A/B$ is a $P$-torsion group. 
\end{thm}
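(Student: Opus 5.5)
Since this is Hall's Lemma~5.2, the plan is to reconstruct it via the module structure of $A$ over a polycyclic group ring. Fix a finitely generated $G$ with $A \trianglelefteq G$ and $\Gamma := G/A$ polycyclic; $A$ is then a $\mathbb{Z}\Gamma$-module under conjugation. Because $\Gamma$ is finitely presented (cf.\ the use of \citep[Lemma~7.30]{ggt} in \cref{thm:matrix_rep}), $A$ is finitely generated as a $\mathbb{Z}\Gamma$-module. The target statement is equivalent to: there is a finite set of primes $P$ such that $A$ has a maximal $\mathbb{Z}$-linearly independent subset, and every element of $A$ has a multiple in the subgroup it generates using only integers divisible by primes in $P$. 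In other words, $A$ is ``$P$-minimax-like'' modulo a free abelian subgroup.

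The steps, in order, are as follows.

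(1) Reduce to the torsion-free part. Hall's theorem on finitely generated $\mathbb{Z}\Gamma$-modules (Noetherianity of $\mathbb{Z}\Gamma$ for polycyclic $\Gamma$, and finiteness of the exponents of torsion) gives that the torsion subgroup $T$ of $A$ is a submodule of bounded exponent $e$. Put the primes dividing $e$ into $P$ and pass to $A/T$. A lift of a free subgroup of $A/T$ is again free, and the extension of a $P$-torsion group by $T$ is still $P$-torsion, so this reduction is harmless.

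(2) Tensor with $\mathbb{Q}$ and apply generic freeness. The module $A\otimes\mathbb{Q}$ is finitely generated over $\mathbb{Q}\Gamma$. Hall's key ``generic freeness'' lemma says that there is a nonzero integer $m$ and a finitely generated $\mathbb{Z}\Gamma'$-free (hence $\mathbb{Z}$-free) submodule $F \le A$, for a suitable subgroup $\Gamma'$ of $\Gamma$, such that $A[1/m]/F[1/m]$ is $\mathbb{Z}[1/m]$-torsion-free with smaller invariant. I would run an induction on the Hirsch length of $\Gamma$, following Hall's argument for modules over $\mathbb{Z}[x_1^{\pm1},\dots]$ built by successive cyclic extensions. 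The base case $\Gamma$ trivial is immediate: $A$ is finitely generated abelian, so we may take $B$ to be a free complement of $T$ and $P$ to be the primes dividing $|T|$.

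(3) Choose $B$ and $P$. Take $B$ to be the free abelian group spanned by a maximal independent set chosen inside the successive free pieces. Take $P$ to be the primes dividing all the $m$'s produced in the finitely many induction steps, together with those from (1). Then every element of $A/B$ is killed by a product of these $m$'s, so $A/B$ is $P$-torsion.

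The main obstacle is step (2): the generic freeness statement over group rings of polycyclic (not just abelian) groups. For $\Gamma$ abelian this is a Noether-normalisation/generic flatness argument over $\mathbb{Z}[\Gamma]$. In general I would pass to a finite-index poly-$\mathbb{Z}$ subgroup, handling the finite index by restriction, which preserves finite generation. I would then treat one cyclic layer $\langle x\rangle$ at a time, viewing the module as a module over $R[x^{\pm1}]$ and inverting the leading and trailing coefficients, which supply the finitely many new primes.
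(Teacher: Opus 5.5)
Your reductions are fine. The torsion subgroup $T$ is a submodule of a finitely generated module over the Noetherian ring $\mathbb{Z}\Gamma$, so it is finitely generated and hence of finite exponent. Lifting a basis of a free subgroup of $A/T$ gives a free subgroup of $A$ meeting $T$ trivially. And if $A$ is torsion-free with $A[1/m]$ free over $\mathbb{Z}[1/m]$ for some $m\neq 0$, then the span $B$ of a $\mathbb{Z}[1/m]$-basis rescaled into $A$ has $A/B$ killed elementwise by powers of $m$. It matters that $B$ comes from such a basis: the span of an arbitrary maximal independent subset of an infinite-rank $A$ (e.g.\ $\{p_ie_i\}$ in $\bigoplus_i\mathbb{Z}e_i$ with distinct primes $p_i$) can have a quotient involving infinitely many primes.

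So everything rests on step (2): generic freeness of finitely generated modules over polycyclic group rings. This is the real content of the lemma, which the paper cites from Hall without proof, and your proposal does not carry it out. As formulated, (2) cannot be iterated. $F$ is only a $\mathbb{Z}\Gamma'$-submodule, so $A/F$ is no longer a finitely generated $\mathbb{Z}\Gamma$-module, and the ``smaller invariant'' is never specified. The mechanism you propose, inverting leading and trailing coefficients, only makes sense when the coefficient ring is $\mathbb{Z}$, i.e.\ essentially for $\Gamma$ infinite cyclic. For a cyclic layer $\Gamma/\Gamma_1=\langle x\Gamma_1\rangle$ one has $\mathbb{Z}\Gamma=R[x^{\pm1};\sigma]$, a skew Laurent ring over the (generally non-commutative) ring $R=\mathbb{Z}\Gamma_1$. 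The leading coefficients of relations are elements of $R$, not integers, and ``inverting'' them produces no primes.

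The missing idea is the standard Noetherian filtration argument. The inductive statement must be ``$M[1/m]$ is $\mathbb{Z}[1/m]$-free'' for all finitely generated modules, torsion allowed. This is needed because leading-term modules have torsion even when $M$ does not: $\mathbb{Z}[x^{\pm1}]/(3x-1)\cong\mathbb{Z}[1/3]$ has leading-term module $\mathbb{Z}/3$. The argument runs as follows.
\begin{enumerate}
\item Choose a finitely generated $R$-submodule $M_0$ with $M=\sum_{i\in\mathbb{Z}}M_0x^i$. Filter $M$ by $0\subseteq M_0\subseteq M_0+M_0x\subseteq\cdots$, and then continue by adjoining $M_0x^{-1},M_0x^{-2},\dots$.
\item As abelian groups, the successive factors are $M_0$ and $M_0/K_n$, where $K_n=\{a\in M_0: ax^n\in\sum_{0\le i<n}M_0x^i\}$, together with analogous quotients $M_0/J_k$ on the negative side.
\item The $K_n$ and the $J_k$ form ascending chains of $R$-submodules of $M_0$, so both stabilise because $R$ is Noetherian.
\item Hence only finitely many finitely generated $R$-modules occur, up to a $\sigma$-twist, and twisting does not change the underlying abelian group. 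The inductive hypothesis therefore yields a single $m$ making every factor $\mathbb{Z}[1/m]$-free.
\item An ascending chain with $\mathbb{Z}[1/m]$-free factors splits at each step, so $M[1/m]$ is free.
\end{enumerate}
Finite layers are handled by restriction, as you say.
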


\begin{cor}  \label{contain_z_infinite_torsion}
    Let $B, P$ be defined as in \cref{lem:hall_result}. Let $r = \operatorname{rank}(B) \in \mathbb{N} \cup \{\infty\}$. Then for a prime $q \notin P$, we have $A/qA \cong (\mathbb{Z}/q\mathbb{Z})^r$.
\end{cor}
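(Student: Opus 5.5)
Apply the functor $-\otimes_{\mathbb{Z}}\mathbb{Z}/q\mathbb{Z}$ to the short exact sequence $0\to B\to A\to A/B\to 0$. Right exactness of the tensor product gives an exact sequence
\[
\operatorname{Tor}_1^{\mathbb{Z}}(A/B,\mathbb{Z}/q\mathbb{Z}) \longrightarrow B/qB \longrightarrow A/qA \longrightarrow (A/B)/q(A/B) \longrightarrow 0 .
\]
The plan is to show that both outer terms vanish when $q\notin P$, so that $B/qB\cong A/qA$. Since $B$ is free abelian of rank $r$, we have $B/qB\cong(\mathbb{Z}/q\mathbb{Z})^r$, where $(\mathbb{Z}/q\mathbb{Z})^r$ means a direct sum of $r$ copies when $r=\infty$ (countably many, as $A$ is countable).

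To avoid homological language, I would instead verify the two vanishing statements by hand.

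\begin{enumerate}[(i)]
\item \textbf{Surjectivity:} $A=B+qA$. Take $a\in A$. Since $A/B$ is $P$-torsion, there is a $P$-number $s$ with $sa\in B$. As $q\notin P$, we have $\gcd(s,q)=1$, so Bézout gives $xs+wq=1$. Then $a=x(sa)+q(wa)\in B+qA$.
\item \textbf{Injectivity:} $B\cap qA=qB$. Suppose $b=qa$ with $a\in A$. Choose a $P$-number $s$ with $sa\in B$, and write $xs+wq=1$. Then $a=x(sa)+w(qa)=x(sa)+wb\in B$, hence $b\in qB$.
\end{enumerate}

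These two facts show that the natural map $B/qB\to A/qA$ is an isomorphism. Combined with $B/qB\cong(\mathbb{Z}/q\mathbb{Z})^r$, this gives the claim.

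There is no real obstacle here; the argument is the same Bézout trick as in the proof of \cref{index}. The one point needing care is the case $r=\infty$, where the statement should be read as an infinite direct sum. This reading is what will later supply infinitely many elements of order $q$.
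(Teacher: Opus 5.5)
Your proposal is correct and follows essentially the same route as the paper. Both arguments show that the natural map $B/qB\to A/qA$ is bijective by checking $B\cap qA=qB$ and $A=B+qA$; the paper phrases these two steps through multiplication by $q$ being bijective on the $P$-torsion group $A/B$, while you unpack that fact elementwise with Bézout, which also justifies the surjectivity of multiplication by $q$ that the paper only asserts.
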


Note that in the case where $A$ is a torsion group and $q \notin P$, we have $A/qA \cong (\mathbb{Z}/q\mathbb{Z})^0 \cong \{0\}$, which is consistent with the result. Furthermore, \cref{contain_z_infinite_torsion} actually provides another way of proving \cref{index}.
\begin{proof}
    Let $q \notin P$. Since $A/B$ is $P$-torsion and $q \notin P$, the group $A/B$ has no $q$-torsion; thus, multiplication by $q$ on $A/B$ is bijective. Define the map:
    \[
    \phi : B/qB \to A/qA, \quad b+qB \mapsto b+qA.
    \]
    Since $B \subseteq A$, the map $\phi$ is well-defined.   
    We will show $\phi$ is bijective, starting with injectivity. If $\phi(b+qB) = b+qA = 0$, then $b \in B \cap qA$. This implies $b = qa$ for some $a \in A$. In the quotient $A/B$, we have    
  $  q(a+B) = b+B = 0.$    
    Since multiplication by $q$ on $A/B$ is injective, we have $a+B = 0$, which means $a \in B$. Thus $b = qa \in qB$, proving that $\phi$ is injective.    
    For surjectivity, let $a+qA \in A/qA$. Since multiplication by $q$ on $A/B$ is surjective, there exists $c \in A$ such that $a+B = q(c+B)$. This implies $b = a - qc \in B$. Since $a \equiv b \pmod{qA}$, we have    
   $ \phi(b+qB) = b+qA = a+qA.$    
    Thus $\phi$ is surjective. Hence $A/qA \cong B/qB$. Given $B \cong \mathbb{Z}^r$, we obtain $B/qB \cong (\mathbb{Z}/q\mathbb{Z})^r$, and therefore $A/qA \cong (\mathbb{Z}/q\mathbb{Z})^r$.
\end{proof}

\begin{prop}[Existence of a quotient with infinitely many $p$-torsion elements]  \label{quotient_with_inf_tor}
Let $A \in \mathfrak{B}$ such that $A$ has infinite subgroup rank. Then there exists a prime $p$ such that $A/pA$ is an infinite elementary abelian $p$-group. In the case where $A$ contains a copy of $\mathbb{Z}^{\mathbb{N}}$, the statement holds for all sufficiently large $p$. 
\end{prop}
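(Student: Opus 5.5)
Apply \cref{lem:hall_result} to $A$: there is a finite set of primes $P$ and a free abelian subgroup $B \le A$ with $A/B$ a $P$-torsion group. Split into two cases according to whether $r = \operatorname{rank}(B)$ is infinite or finite.

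\textbf{Case $r = \infty$.} By \cref{contain_z_infinite_torsion}, for every prime $q \notin P$ we get $A/qA \cong (\mathbb{Z}/q\mathbb{Z})^{r}$, an infinite elementary abelian $q$-group. Since $P$ is finite, this holds for all sufficiently large primes. This case also gives the last assertion. If $A$ contains a copy of $\mathbb{Z}^{\mathbb{N}}$ (read as a free abelian subgroup of infinite rank), then $A$ has infinite torsion-free rank. Since $A/B$ is torsion, $B$ must have the same torsion-free rank, so $r = \infty$ and we are in this case.

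\textbf{Case $r < \infty$.} Then $A/B$ is a $P$-torsion group, and $A$ has infinite subgroup rank. Since $B$ is finitely generated of rank $r$, $A/B$ must also have infinite subgroup rank; otherwise $A$, as an extension of a group of rank $r$ by one of finite rank, would have finite subgroup rank.

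Write the torsion group $A/B = \bigoplus_{p \in P} (A/B)_p$ as the sum of its primary components. Finitely many components of finite rank would give finite total rank, so some $(A/B)_p$ with $p \in P$ has infinite rank. Hence its socle $\Omega_1((A/B)_p)$ is infinite, by the standard fact that an abelian $p$-group has finite rank iff its socle is finite.

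It remains to pass from infinite $p$-rank of $A/B$ to $A/pA$ being infinite, and this is the main obstacle. Infinite socle does not directly give an infinite quotient $C/pC$: the group $\mathbb{Z}(p^\infty)^{(\mathbb{N})}$ is divisible, so $C/pC = 0$.

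To handle this, I would use the $\mathbb{Z}\Gamma$-module structure, in which $A$ is a finitely generated module over the polycyclic group ring. First, the $p$-torsion subgroup $T_p$ of $A$ is a $\Gamma$-submodule. By Hall's results, a finitely generated $\mathbb{Z}\Gamma$-module has no nonzero divisible $\Gamma$-invariant sections of this kind; more precisely, its $p$-torsion part has finite exponent. I would try to derive this from \citep{peter_hall_finiteness_certain_soluble_groups} via max-$n$/residual finiteness: finitely generated modules over $\mathbb{Z}\Gamma$ satisfy the maximal condition on submodules, so $T_p$ is finitely generated and annihilated by some $p^k$.

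The cleanest route is then as follows. If $A/pA$ were finite, then $A/p^kA$ would be finite for all $k$. Take the free part $B$ of rank $r$ together with the bounded-exponent torsion $T$. The quotient $A/(B+T)$ is $P$-torsion and divisible-by-$p$ issues are controlled, so this would force the $p$-rank of $A$ to be finite, contradicting the choice of $p$. The delicate step is making the finiteness of $A/pA$ bound the rank of the $p$-primary part. I would do this by showing that $T_p$ has bounded exponent $p^k$ and therefore embeds into $A/p^kA$ up to finite error coming from $B$.

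Hence $A/pA$ is infinite elementary abelian for that $p$.
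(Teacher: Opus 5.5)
Your outline is correct in substance and reaches the result by a different route from the paper's. The paper cites Hall's Lemma~8, which gives $A = A_t \oplus A_{tf}$ with $A_t$ of bounded exponent, and splits on whether $A_t$ is finite.
\begin{itemize}
\item If $A_t$ is infinite, some $A_t(q)$ is an infinite bounded $q$-group, so $A_t(q)/qA_t(q)$ is infinite. It sits inside $A/qA$ because $A_t$ is a direct summand.
\item If $A_t$ is finite, $A_{tf}$ has infinite rank and \cref{contain_z_infinite_torsion} applies.
\end{itemize}
You split on $r=\operatorname{rank}(B)$ instead. You re-derive the bounded exponent of $T_p$ from Hall's theorem that $\mathbb{Z}\Gamma$ is Noetherian, and this is correct. $A$ is a finitely generated $\mathbb{Z}\Gamma$-module since $\Gamma$ is finitely presented. $T_p$ is a submodule, hence finitely generated, hence killed by some $p^k$, as $p^k$ is central. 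This is essentially the proof of Hall's lemma, and you only need purity where the paper uses the direct-summand property. Your split also makes the ``all sufficiently large $p$'' clause transparent: it is exactly the case $r=\infty$, regardless of the torsion.

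Your closing paragraph, however, is not yet an argument. No ``finite error coming from $B$'' arises. The divisibility of $A/(B+T)$ is irrelevant; it may well be divisible, as for $A=\mathbb{Z}[1/p]$, $B=\mathbb{Z}$. What you need are two elementary facts.

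First, the torsion subgroup is pure. If $t\in T_p$ and $t=pa$, then $a$ is $p$-power torsion, so $T_p\cap pA=pT_p$ and $T_p/pT_p\hookrightarrow A/pA$. Since $T_p$ has exponent $p^k$, it is a direct sum of cyclic groups. Hence $T_p/pT_p$ is infinite as soon as $T_p$ has infinite rank.

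Second, $T_p$ itself, and not merely $(A/B)_p$, must have infinite rank. The kernel of $(A/B)_p\to A/(B+T)$ is $((T+B)/B)_p\cong T_p$. Also, $A/(B+T)$ is a quotient of the torsion-free rank-$r$ group $A/T$, so it has subgroup rank at most $r$. Hence $\operatorname{rk}((A/B)_p)\le \operatorname{rk}(T_p)+r$.

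It is cleaner to choose $p$ via $T$ from the start. $T$ embeds in $A/B$, so $T=\bigoplus_{p\in P}T_p$, and $A/T$ has rank $r$, so some $T_p$ has infinite rank. With these two points the proof is complete, and the contradiction argument through $A/p^kA$ is unnecessary.
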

\begin{proof}According to \citep[Lemma 8]{peter_hall_finiteness_certain_soluble_groups}, we may write $A = A_{t} \oplus A_{tf}$, where $A_t$ is the torsion subgroup with bounded exponent and $A_{tf}$ is the torsion-free part. We first consider the case where $A_t$ is infinite. Since $A_t$ has bounded exponent, there exists a finite set of primes $P_t$ such that:$$A_t = \bigoplus_{p \in P_t} A_t(p)$$where each $A_t(p)$ is an abelian $p$-group. Since $A_t$ is infinite and has bounded exponent, there exists $q \in P_t$ and $k \in \mathbb{N}$ such that $A_t(q)$ is infinite and $A_t(q) = \{a \in A \mid q^k a = 0\}$. In particular, $A_t(q)$ has infinitely many elements of order $q$. Hence $A_t(q) / qA_t(q)$ is an infinite elementary abelian $q$-group. Let $C = \bigoplus_{p \in P_t \setminus \{q\}} A_t(p)$. Then we have:$$\frac{A}{qA} \cong \frac{A_t(q) \oplus C \oplus A_{tf}}{qA_t(q) \oplus qC \oplus qA_{tf}} \cong \frac{A_t(q) \oplus A_{tf}}{qA_t(q) \oplus qA_{tf}}$$which is an infinite elementary abelian $q$-group.

Now, suppose $A_t$ is finite. Since $A$ has infinite subgroup rank,  $A_{tf}$ must contain a subgroup isomorphic to $\mathbb{Z}^\mathbb{N}$. This case follows immediately from \cref{contain_z_infinite_torsion}.\end{proof}

\subsection{The u.c property}
Let $G = A \rtimes \Gamma$ be a finitely generated group, where $\Gamma$ is polycyclic and $A$ is abelian of infinite subgroup rank. Our aim is to construct a sequence of finite coset spaces that to show $G$ does not have the u.c.\ property \eqref{diam_lower_bound}. The following lemma will help provide an upper bound on the diameter of this sequence.

\begin{prop}[Polynomial word length in conjugate-generated abelian subgroups ] \label{diam_in_semi}
Let $G$ be a finitely generated group with a torsion abelian normal subgroup $A$ such that $G/A = \langle t_1 A, \dots, t_m A \rangle$. Let $R \subset A$ be a finite set, $M$ be the exponent of $\langle R \rangle$, and $S = R \cup \{t_1, \dots, t_m\}$. For $l_1, \dots, l_m \in \mathbb{N}$, we define $A^c_{l_1, \dots, l_m} \le A$ as the subgroup generated by the action of the elements $g = t_1^{k_1} \cdots t_m^{k_m}$ on $R$:
\begin{equation} \label{eq:subgroup_A}
A^c_{l_1, \dots, l_m} = \langle (t_1^{k_1} \cdots t_m^{k_m}) \cdot r \mid |k_i| \leq l_i,\; r \in R \rangle.
\end{equation}
Let $l = \sum_{i=1}^m l_i$. Denote by $d(A^c_{l_1, \dots, l_m})$ the minimal integer such that $A^c_{l_1, \dots, l_m} \subseteq S^d$. Then for a fixed $R$, $d(A^c_{l_1, \dots, l_m})$ is polynomially bounded in $l$.


\end{prop}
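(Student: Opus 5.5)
Since $A$ is abelian and torsion, every element of $A^c_{l_1,\dots,l_m}$ is a sum $\sum_{g,r} n_{g,r}\,(g\cdot r)$ with $0 \le n_{g,r} < M$. Here $g$ runs over the elements $t_1^{k_1}\cdots t_m^{k_m}$ with $|k_i|\le l_i$, and $r\in R$. Indeed, each $g\cdot r$ is the image of $r\in\langle R\rangle$ under an automorphism of $A$, so it has order dividing $M$. A naive expansion writes each conjugate $g r g^{-1}$ as a word of length $O(l)$. There are $\prod_i(2l_i+1)$ such conjugates, so this gives a word of length $O(M|R|\,l\prod_i(2l_i+1))$, which is already polynomial in $l$. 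To keep the dependence clean, I would organise the word by a Horner-type nesting instead.

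\textbf{Step 1 (one generator).} I will show by induction on $m$ that every element of $A^c_{l_1,\dots,l_m}$ is a product of at most $C(m,M,|R|)\,(l+1)^{m}$ letters from $S^{\pm1}$. The constant $C$ may also absorb word lengths of inverses: $r^{-1}=r^{M-1}$, and any $t_i^{-1}$ are letters of $S^{\pm1}$. (If $S$ is not symmetric, one notes that $t_i^{-1}$ is still some fixed word in $S$ modulo $A$; I would simply take $S$ symmetric, which changes nothing by \cref{lem:indep.gen.set}.) For $m=1$, any element has the form $a=\sum_{k=-l_1}^{l_1} t_1^{k}\cdot b_k$ with $b_k\in\langle R\rangle$. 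Each $b_k$ is a word of length at most $M|R|$ in $R$. Writing multiplicatively and using the Horner scheme,
\[
a = t_1^{-l_1}\, b_{-l_1}\, t_1\, b_{-l_1+1}\, t_1 \cdots t_1\, b_{l_1}\, t_1^{-l_1},
\]
which has length $O(M|R|\,l_1)$. Here one uses that $A$ is abelian, so the order of the factors is irrelevant, and that the conjugating powers telescope.

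\textbf{Step 2 (induction).} For general $m$, I group the terms by $k_1$, where $g=t_1^{k_1}h$ and $h=t_2^{k_2}\cdots t_m^{k_m}$. This gives $a=\sum_{k_1} t_1^{k_1}\cdot a_{k_1}$, where each $a_{k_1}=\sum_{h,r} n\,(h\cdot r)\in A^c_{l_2,\dots,l_m}$ is formed with respect to $t_2,\dots,t_m$ and $R$. Then the Horner scheme of Step 1 applies with the $b_k$ replaced by the $a_{k_1}$. By induction each $a_{k_1}$ has length at most $C_{m-1}(l+1)^{m-1}$. So
\[
d(A^c_{l_1,\dots,l_m}) \le (2l_1+1)\,C_{m-1}(l+1)^{m-1} + 4l_1 \le C_m (l+1)^m .
\]
Note that $t_2,\dots,t_m$ need not commute with $t_1$ for this to work. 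We only ever conjugate elements of $A$ by explicit words, and the action of the word $t_1^{k_1}t_2^{k_2}\cdots$ is exactly $t_1^{k_1}\cdot(h\cdot r)$. So no commutation of the $t_i$ is required, and the Horner telescoping involves only powers of $t_1$.

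\textbf{Main obstacle.} The only delicate point is making sure the decomposition $a=\sum_{k_1}t_1^{k_1}\cdot a_{k_1}$ lands in the correct smaller subgroup, and that coefficients are bounded by $M$. The second requires that $M$, the exponent of $\langle R\rangle$, bounds the order of every conjugate $g\cdot r$. This holds because conjugation is an automorphism of $A$. Hence the coefficients can be reduced modulo $M$ before building the word. Everything else is bookkeeping, and the resulting bound $O_{m,M,|R|}((l+1)^m)$ is polynomial in $l$ as claimed.
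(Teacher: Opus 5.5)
Your argument is correct. Its main line differs from the paper's in how the words are assembled. The paper's proof is exactly your ``naive expansion''. Each conjugate $(t_1^{k_1}\cdots t_m^{k_m})\cdot r$ lies in $S^{2l+1}$, and $A^c_{l_1,\dots,l_m}$ has exponent dividing $M$. So every element is a product of at most $M$ times (number of generators) such conjugates, which gives $d \le (2l+1)M|R|\prod_i l_i \le (2l+1)M|R|\,l^m$.

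Your Horner nesting proceeds by induction on $m$, and at each stage only powers of $t_1$ telescope. This avoids paying an $O(l)$ conjugation cost separately for each of the $O(l^m)$ conjugates. You obtain $d = O_{m,M,|R|}\bigl((l+1)^m\bigr)$ instead of $O(l^{m+1})$. The price is an induction in place of a two-line count. In the only application, \cref{inf_g_abelian_u_c}, the index satisfies $[A:A_i]\ge p^{l(i)}$, which is exponential in $l(i)$, so any polynomial bound suffices and the sharper exponent is not needed there.

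Your count $\prod_i(2l_i+1)$ is also the right one. The paper counts only $k_i\in\{1,\dots,l_i\}$, which overlooks negative exponents and makes the bound vanish when some $l_i=0$. Its polynomial conclusion still survives with $\prod_i(2l_i+1)\le(2l+1)^m$.

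Two small corrections.

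First, like the paper, you need words in $S^{\pm1}$; the paper writes $grg^{-1}\in S^{2l+1}$ and applies the result with a symmetric $S$. Your aside that $t_i^{-1}$ is ``some fixed word in $S$ modulo $A$'' is false when $G/A$ is infinite. For example, if $G/A\cong\mathbb{Z}$, no positive word in $tA$ equals $t^{-1}A$. Also, \cref{lem:indep.gen.set} is about the flatness properties, not word lengths. The correct justification is simply that $S$ is taken symmetric by convention, as in the paper.

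Second, in Step 2 the induction hypothesis should be stated for an arbitrary finite tuple of elements of $G$, because $t_2A,\dots,t_mA$ need not generate $G/A$. Your argument never uses the generation hypothesis, so this is harmless.
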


Recall that a finite abelian group with generating set $S$ and exponent $m$ has its diameter upper bounded by $|S|m$; this is the key idea for our proof.

\begin{proof}
For each $i$, we have $S^{l_i} \supseteq \{ t_i^{k_i} \mid k_i \in \{1, \dots, l_i\} \}$. Hence, for any $k_i \in \{1, \dots, l_i\}$ and $r \in R$,$$(t_1^{k_1} \cdots t_m^{k_m}) \cdot r = (t_1^{k_1} \cdots t_m^{k_m}) r (t_1^{k_1} \cdots t_m^{k_m})^{-1} \in S^{2l+1}.$$Therefore, every generator in the generating set of $A^c_{l_1, \dots, l_m}$ defined in \eqref{eq:subgroup_A} is contained in $S^{2l+1}$. It is clear that the exponent of $A^c_{l_1, \dots, l_m}$ is the same as the exponent of $\langle R \rangle$, which is $M$. We now count the generators in \eqref{eq:subgroup_A}. Writing $\mathbf{k} = (k_1, \dots, k_m)$, there are at most $|R| \prod_{i=1}^m l_i$ choices for the pair $(r, \mathbf{k})$ with $r \in R$ and $k_i \in \{1, \dots, l_i\}$. This bounds the number of generators of $A^c_{l_1, \dots, l_m}$ defined in \eqref{eq:subgroup_A}.It follows that:$$S^{(2l+1) M |R| \prod_{i=1}^m l_i} \supseteq A^c_{l_1, \dots, l_m}.$$Thus,$$d(A^c_{l_1, \dots, l_m}) \le (2l+1) M |R| \prod_{i=1}^m l_i \le (2l+1) M |R| l^m,$$as required.
\end{proof}

The following result will help us compute a lower bound of the size of the finite coset space.

\begin{lem} \label{once_stop_then_stablise}
Let $G$ be a group and $N \trianglelefteq G$. Let $f \in \operatorname{Aut}(G)$ be such that $f(N)=N$. Let $R \subseteq G$ be a finite set. For $k \in N \cup \{\infty\}$, define
\[ H_k = \langle f^i(R) \cup N \mid |i| \leq k \rangle \]
If $H_L = H_{L+1}$ for some $L \geq 0$, then $H_L = H_\infty$.
\end{lem}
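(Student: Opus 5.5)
The plan is to show that $f$ and $f^{-1}$ both map $H_L$ into itself. Once we know $f(H_L)=H_L$, it follows that $H_L$ contains $f^i(R)$ for every $i\in\mathbb{Z}$, and hence $H_\infty\subseteq H_L$. The reverse inclusion $H_L\subseteq H_\infty$ holds by definition. Note that the sequence $(H_k)$ is increasing in $k$.

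First we handle the forward direction. Applying $f$ to the generating set of $H_L$ gives
\[
f(H_L)=\langle f^{i}(R)\cup f(N) \mid -L+1\le i\le L+1\rangle=\langle f^{i}(R)\cup N \mid -L+1\le i\le L+1\rangle,
\]
where we use $f(N)=N$. This subgroup is contained in $H_{L+1}$, and $H_{L+1}=H_L$ by hypothesis, so $f(H_L)\subseteq H_L$. In exactly the same way, $f^{-1}$ sends the generators of $H_L$ to $f^{i}(R)$ for $-L-1\le i\le L-1$ together with $N$, because $f^{-1}(N)=N$ as well. Hence $f^{-1}(H_L)\subseteq H_{L+1}=H_L$. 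Combining the two inclusions gives $f(H_L)=H_L$.

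Next we extend to all exponents. Since $f^{\pm1}(H_L)\subseteq H_L$, induction gives $f^{j}(H_L)\subseteq H_L$ for every $j\in\mathbb{Z}$. For any $i\in\mathbb{Z}$, write $i=j+i_0$ with $|i_0|\le L$; for instance, take $i_0=0$ and $j=i$. Then $f^{i}(R)=f^{j}(f^{i_0}(R))\subseteq f^{j}(H_L)\subseteq H_L$. Therefore every generator of $H_\infty$ lies in $H_L$, and $H_\infty=H_L$.

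There is no real obstacle in this argument. The only point requiring care is that the hypothesis $H_L=H_{L+1}$ has to control shifts in both directions. This works because $H_{L+1}$ already absorbs the extra index on each side, $i=L+1$ and $i=-L-1$. Here $H_\infty$ means the subgroup generated by $N$ and all $f^i(R)$ with $i\in\mathbb{Z}$.
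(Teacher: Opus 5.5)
Your proof is correct and follows the same idea as the paper: you use $H_{L+1}=H_L$ to absorb the boundary generators $f^{\pm(L+1)}(R)$, deduce $f^{\pm1}(H_L)\subseteq H_L$, and conclude that $H_L$ contains every $f^i(R)$. The only difference is that the paper first passes to $G/N$ and argues with the induced automorphism $\bar f$, whereas you work directly in $G$ using $f^{\pm1}(N)=N$, which also shows that the normality of $N$ is not actually needed.
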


\begin{proof}
    Since $N \trianglelefteq G$ and $f(N) = N$, the automorphism $f$ induces a well-defined automorphism $\bar{f}$ on the quotient group $\bar{G} = G/N$, defined by $\bar{f}(gN) = f(g)N$. Let $\bar{R} = \{rN \mid r \in R\}$. The subgroups $H_k$ correspond to the following subgroups in the quotient:    \[ \bar{H}_k = \langle \bar{f}^i(\bar{R}) \mid |i| \leq k \rangle \leq \bar{G} \]
    The hypothesis $H_L = H_{L+1}$ implies $\bar{H}_L = \bar{H}_{L+1}$ in $\bar{G}$.     The subgroup $\bar{H}_{L+1}$ is generated by $\bar{H}_L$ along with the elements $\bar{f}^{L+1}(\bar{R})$ and $\bar{f}^{-(L+1)}(\bar{R})$. Since $\bar{H}_L = \bar{H}_{L+1}$, it follows that:
    \[ \bar{f}^{L+1}(\bar{R}) \subseteq \bar{H}_L \quad \text{and} \quad \bar{f}^{-(L+1)}(\bar{R}) \subseteq \bar{H}_L \]

    We check if $\bar{H}_L$ is invariant under the automorphism $\bar{f}$. Consider the image of the generating set:
    \[ \bar{f}(\bar{H}_L) = \langle \bar{f}^{-L+1}(\bar{R}), \dots, \bar{f}^L(\bar{R}), \bar{f}^{L+1}(\bar{R}) \rangle \]
    By definition, all generators $\bar{f}^j(\bar{R})$ for $-L+1 \leq j \leq L$ are in $\bar{H}_L$. The ``new" generator $\bar{f}^{L+1}(\bar{R})$ is also in $\bar{H}_L$ by our previous observation. Thus, $\bar{f}(\bar{H}_L) \subseteq \bar{H}_L$.    Similarly, consider the inverse $\bar{f}^{-1}$:
    \[ \bar{f}^{-1}(\bar{H}_L) = \langle \bar{f}^{-L-1}(\bar{R}), \dots, \bar{f}^{L-1}(\bar{R}) \rangle \]
    The ``boundary" generator $\bar{f}^{-(L+1)}(\bar{R})$ is in $\bar{H}_L$; thus, $\bar{f}^{-1}(\bar{H}_L) \subseteq \bar{H}_L$. Since $\bar{f}^{-1}(\bar{H}_L) \subseteq \bar{H}_L$ implies $\bar{H}_L \subseteq \bar{f}(\bar{H}_L)$, we conclude that $\bar{f}(\bar{H}_L) = \bar{H}_L$.     Because $\bar{H}_L$ is invariant under both $\bar{f}$ and its inverse, it must contain all powers $\bar{f}^i(\bar{R})$ for all $i \in \mathbb{Z}$. Therefore, $\bar{H}_L = \bar{H}_\infty$. Since $H_i \supseteq N$ for all $i \in \mathbb{N} \cup \{\infty\}$ by definition, we obtain $H_L = H_\infty$.
\end{proof}

\begin{lem} [Bound on the size of a set of complete coset representations.] \label{inf_g_abelian_bdd_generated_coset}
Let $G$ be a finitely generated group and $A \trianglelefteq G$ such that $Q = G/A$ is boundedly generated by $\{t_1 A, \dots, t_m A\}$ and $A$ is normally generated by a finite subset $R \subset A$. Let $N$ be a finite index subgroup of $A$.
Then:
\begin{enumerate}
    \item There exist $l_1, \dots, l_m$ such that 
    \begin{equation} \label{eq:subgroup_A_lemma}
    A^c_{l_1, \dots, l_m} N = A
    \end{equation}
    \item Suppose each $l_i$ is minimal, in the sense that if we replace any $l_i$ with any $t < l_i$, we have 
    \[ A^c_{l_1, \dots, t, \dots, l_m} N \neq A^c_{l_1, \dots, l_i, \dots, l_m} N, \]
    then  $[A:N]$ has size at least $p^{\sum l_i}$, where $p$ is the minimum prime divisor of the index $[A:N]$.
\end{enumerate}
\end{lem}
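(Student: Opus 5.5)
Part (1) follows from a finiteness argument, and part (2) from a strictly increasing chain of subgroups in the finite quotient $A/N$. Throughout, $A^c_{l_1,\dots,l_m}$ is defined as in \eqref{eq:subgroup_A}, with conjugation by the bounded-generation representatives $t_1^{k_1}\cdots t_m^{k_m}$.

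For (1), every element of $A$ is a product of conjugates $g\cdot r$ with $g \in G$ and $r \in R$. Since $Q$ is boundedly generated, each $g$ can be written as $g = t_1^{k_1}\cdots t_m^{k_m} a$ with $a \in A$. Because $A$ is abelian, $a$ acts trivially, so $g\cdot r = (t_1^{k_1}\cdots t_m^{k_m})\cdot r$. Hence $A = \bigcup_{l_1,\dots,l_m} A^c_{l_1,\dots,l_m}$, an increasing union over the directed set of tuples. The groups $A^c_{l_1,\dots,l_m}N/N$ form an increasing directed family of subgroups of the finite group $A/N$ whose union is $A/N$. Finitely many elements suffice to generate $A/N$, and each lies in some member of the family, so some single tuple $(l_1,\dots,l_m)$ already gives $A^c_{l_1,\dots,l_m}N = A$.

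For (2), fix a minimal tuple and build a chain by increasing one coordinate at a time. Start from $A^c_{0,\dots,0}N$ (we may take this to be $\langle R\rangle N$). Raise the first coordinate from $0$ to $l_1$, then the second from $0$ to $l_2$, and so on, obtaining a chain of $\sum l_i + 1$ subgroups
\[
\langle R\rangle N = B_0 \le B_1 \le \dots \le B_{l} = A, \qquad l = \textstyle\sum_i l_i,
\]
all containing $N$. If each inclusion is strict, then each index $[B_{j+1}:B_j]$ divides $[A:N]$ and exceeds $1$, so it is at least $p$. This gives $[A:N] \ge p^{l}$.

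The main obstacle is strictness. The minimality hypothesis only concerns the final tuple: lowering $l_i$ by one, with the other coordinates at full value, changes the group. We must show strictness along the intermediate steps, where some coordinates are still small. I would argue the contrapositive using \cref{once_stop_then_stablise}. Suppose that at some stage, with coordinates $(l_1,\dots,l_{i-1},s,0,\dots,0)$ and $s<l_i$, raising $s$ to $s+1$ gives no new group. Apply \cref{once_stop_then_stablise} in $\bar G = G/N$ (after passing to a normal core of $N$ in $G$ if necessary, or by working directly in the abelian group $A/N$). The automorphism is $f = $ conjugation by $t_i$, and the role of the finite set $R$ is played by the finite set of generators $(t_1^{k_1}\cdots t_{i-1}^{k_{i-1}})\cdot r$. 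Stabilisation at step $s$ then forces the subgroup to be invariant under $t_i^{\pm1}$, so all higher powers of $t_i$ add nothing. This stabilisation persists after the later coordinates are raised. Conjugation by the later $t_j^{k_j}$ maps the stabilised subgroup onto a subgroup generated in the same way, but bounded generation places the $t_j$ to the right of $t_i$, so the actions do not obviously commute. In the intended abelian case $Q \cong \mathbb{Z}^m$ they do commute, and the argument goes through. Consequently $A^c_{l_1,\dots,s,\dots,l_m}N = A^c_{l_1,\dots,l_i,\dots,l_m}N$, contradicting minimality. I would handle the ordering issue by assuming $Q$ is abelian, which is the only case the paper needs, and note it as a point where the general bounded-generation version might need care.
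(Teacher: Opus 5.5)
Your part (1) is fine and is essentially the paper's argument. The gap is in part (2), and it comes from the order in which you raise the coordinates.

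At your stage $(l_1,\dots,l_{i-1},s,0,\dots,0)$ the generators are $(t_1^{k_1}\cdots t_{i-1}^{k_{i-1}}t_i^{e})\cdot r$. Here $t_i^{e}$ acts \textit{first}, not last. So these are not $f^{e}$ applied to $(t_1^{k_1}\cdots t_{i-1}^{k_{i-1}})\cdot r$ with $f$ conjugation by $t_i$, and \cref{once_stop_then_stablise} fails to apply already here, before the propagation step you flagged.

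Restricting to abelian $Q$ does repair this, since $A$ is abelian and the $t_j$ then act on $A$ by commuting automorphisms. But the lemma is stated for arbitrary boundedly generated $Q$. It is used in that generality in \cref{inf_g_abelian_u_c} and \cref{infinite_subgroup_rank}, where $\Gamma$ is only boundedly generated, respectively polycyclic. Only the route to \cref{main_thm} via \cref{abelin_by_nil_case} reduces to $\Gamma\cong\mathbb{Z}^m$.

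The paper avoids the problem by raising the innermost coordinate first. Set $X^{(i)}_t=A^c_{0,\dots,0,t,l_{i+1},\dots,l_m}$ and use the chain $X^{(i)}_tN$ for $i=m,m-1,\dots,1$ and $0\le t\le l_i$. The generators are now $t_i^{e}\cdot y$ with $|e|\le t$ and $y$ in the fixed finite set $R'=\{(t_{i+1}^{k_{i+1}}\cdots t_m^{k_m})\cdot r : |k_j|\le l_j,\ r\in R\}$. So \cref{once_stop_then_stablise} applies verbatim with $f$ conjugation by $t_i$, and a non-strict step at some $t<l_i$ forces $X^{(i)}_tN=X^{(i)}_{l_i}N$. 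Moreover, $A^c_{l_1,\dots,l_{i-1},s,l_{i+1},\dots,l_m}N$ is generated by the images of $X^{(i)}_sN$ under conjugation by the elements $t_1^{k_1}\cdots t_{i-1}^{k_{i-1}}$ with $|k_j|\le l_j$. Hence $A^c_{l_1,\dots,t,\dots,l_m}N=A^c_{l_1,\dots,l_m}N$, contradicting minimality, and no commutation is needed.

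There is a second point. \cref{once_stop_then_stablise} needs $f(N)=N$, so both your argument and the paper's require $N$ to be invariant under the $t_j$, i.e.\ $N\trianglelefteq G$. This holds in the application, where $N=A_i$. Your suggested workarounds do not provide it:
\begin{itemize}
\item replacing $N$ by $\operatorname{core}_G(N)$ changes both the index and the minimal tuple;
\item conjugation by $t_i$ does not induce a map on $A/N$ unless $t_iNt_i^{-1}=N$.
\end{itemize}
The hypothesis is genuinely needed. In $\mathbb{Z}_p\wr\mathbb{Z}$, take $R=\{\delta_0\}$ and $N=\{x\in A : x_1=x_3=0\}$. Then $A^c_{l}N=A$ if and only if $l\ge 3$, so $l=3$ is minimal, yet $[A:N]=p^2<p^3$. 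So you should assume $N\trianglelefteq G$ outright rather than try to reduce to it.
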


\begin{proof}
We start with (i). Let \( C \) be a complete set of coset representatives of \( N \) in \( A \). Since \( N \) is a finite index subgroup of \( A \), \( C \) is finite. Since \( A \) is normally generated by \( R \), every element of \( C \) is in \( A^c_{l_1,\dots,l_m} \) for sufficiently large \( l_i \), and from this the statement follows immediately.

We next prove (ii). By \cref{once_stop_then_stablise}, for every non-negative \( t < l_i \), \( A^c_{0,\dots,0,t,l_{i+1},\dots,l_m}N \) is a proper subgroup of \( A^c_{0,\dots,0,t+1,l_{i+1},\dots,l_m}N \). The statement follows by induction and the Correspondence Theorem for groups.
\end{proof}
Given a semidirect product, the following lemma will help us pass from a normal filtration of the whole semidirect product to a normal filtration of the kernel. Later in the proof of \cref{inf_g_abelian_bdd_generated_coset}, we will use this normal filtration of the kernel to construct a filtration of the whole group.
\begin{lem} \label{filtration_pass_normal_subgroup_swapped}
Suppose $G = N \rtimes H$ admits a normal filtration $(K_i)_{i \in \mathbb{N}}$  with trivial intersection. Then $N$ admits a normal filtration $(N_i)_{i \in \mathbb{N}}$ with trivial intersection such that each $N_i \trianglelefteq G$.
\end{lem}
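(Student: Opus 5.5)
The natural choice is to intersect the given filtration with $N$: set $N_i := K_i \cap N$ for each $i$. Three things then need checking: each $N_i$ is normal in $G$ and has finite index in $N$, the intersection is trivial, and the sequence is strictly decreasing, or can be made so after passing to a subsequence.

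Normality in $G$ and finite index are immediate. Both $K_i$ and $N$ are normal in $G$, so their intersection is too. The map $N/(K_i\cap N) \hookrightarrow G/K_i$ is injective, so $[N:N_i] \le [G:K_i] < \infty$. Trivial intersection follows from $\bigcap_i N_i = N \cap \bigcap_i K_i = \{1\}$. The sequence $(N_i)$ is also non-increasing, because $(K_i)$ is.

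Strict decrease is the only genuine obstacle, since $K_{i+1}\cap N$ may equal $K_i \cap N$. The fix is to pass to a subsequence of distinct terms, and for that I need infinitely many distinct $N_i$. Suppose instead that the sequence eventually stabilises at some $N_{i_0}$. Then $N_{i_0} = \bigcap_i N_i = \{1\}$, so $\{1\}$ would have finite index in $N$ and $N$ would be finite.

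The case of finite $N$ therefore has to be excluded or handled separately. In the intended application $N$ is the infinite abelian kernel $A$, so I would note that the lemma implicitly assumes $N$ is infinite, which is the setting of the paper. Under that assumption the values $N_i$ take infinitely many distinct values. Choosing indices $i_1<i_2<\cdots$ at which the value strictly drops gives a strictly decreasing sequence $(N_{i_j})_j$. This subsequence still has trivial intersection, because the intersection of a non-increasing sequence is unchanged by passing to a cofinal subsequence. Relabelling gives the required normal filtration of $N$ by subgroups normal in $G$.

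The complement $H$ plays no role beyond guaranteeing that $N$ is normal in $G$.
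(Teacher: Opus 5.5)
Your argument is correct and matches the paper's proof: you set $N_i := K_i \cap N$, check normality in $G$, finite index via $[N:N_i] \le [G:K_i]$ and trivial intersection, and then pass to a strictly decreasing subsequence. Your extra observation is also right. The paper's ``refining if necessary'' silently uses that $N$ is infinite, and for finite $N$ no infinite strictly decreasing chain exists, so the statement as written fails in that case. This is harmless, since in the application $N = A$ is an infinite torsion abelian group.
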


\begin{proof}
Let $(K_i)_{i \in \mathbb{N}}$ be a normal filtration of $G$ with $\bigcap_{i \in \mathbb{N}} K_i = \{1\}$. Define $N_i := K_i \cap N$. Then each $N_i \trianglelefteq G$ since both $K_i$ and $N$ are normal in $G$.

Moreover, $[N : N_i] \le [G : K_i] < \infty$, so each $N_i$ has finite index in $N$. Finally,
\[
\bigcap_{i \in \mathbb{N}} N_i = \bigcap_{i \in \mathbb{N}} (K_i \cap N)
= \left( \bigcap_{i \in \mathbb{N}} K_i \right) \cap N
= \{1\}.
\]

Refining $(N_i)$ if necessary, we may assume it is strictly decreasing. Hence, $(N_i)_{i \in \mathbb{N}}$ is the desired normal filtration of $N$.
\end{proof}
We now combine the two bounds derived earlier to construct a sequence of coset spaces that violate the u.c.\ property.
\begin{cor} \label{inf_g_abelian_u_c}
Let $G = A \rtimes \Gamma$ be a finitely generated group such that:
\begin{enumerate}[(i)]
    \item $G$ admits a normal filtration with trivial intersection;
    \item $\Gamma$ is boundedly generated;
    \item $A$ is an infinite torsion abelian group of bounded exponent.
\end{enumerate}
Then $G$ does not have uniformly almost flat coset spaces.
\end{cor}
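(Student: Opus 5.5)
Since $G$ admits a normal filtration with trivial intersection, \cref{filtration_pass_normal_subgroup_swapped} gives a normal filtration $(N_k)$ of $A$ with trivial intersection and each $N_k \trianglelefteq G$. As $A$ is infinite, $[A:N_k]\to\infty$. For each $k$ we form the finite-index subgroup $H_k := N_k \rtimes \Gamma \le G$. This is a subgroup because $N_k$ is $\Gamma$-invariant, and $[G:H_k]=[A:N_k]$. The coset space $G/H_k$ is then identified with $A/N_k$. The plan is to show that $\operatorname{diam}_S(G/H_k)$ is at most polylogarithmic in $[A:N_k]$, which contradicts any bound $\varepsilon [G:H_k]^\alpha$.

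For the setup, fix a bounded generating tuple $t_1,\dots,t_m$ of $\Gamma$. Since $G$ is finitely generated and $\Gamma$ is generated by the $t_i$, $A$ is normally generated in $G$ by a finite set $R\subset A$; concretely, take the $A$-components of a finite generating set of $G$. Bounded generation lets us write every $\Gamma$-conjugate as $(t_1^{k_1}\cdots t_m^{k_m})\cdot r$. Let $S=R\cup\{t_1^{\pm1},\dots,t_m^{\pm1}\}$, which is legitimate by \cref{lem:indep.gen.set}. Since $A$ has bounded exponent $M$, only finitely many primes divide $[A:N_k]$, and all of them divide $M$. In particular the smallest prime divisor is at least $2$.

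The key steps are as follows. By \cref{inf_g_abelian_bdd_generated_coset}(1), choose $l_1,\dots,l_m$ with $A^c_{l_1,\dots,l_m}N_k=A$. Minimise the $l_i$ successively in the order used in part (2), so that part (2) applies. This gives $[A:N_k]\ge 2^{l}$ with $l=\sum l_i$, and hence $l\le \log_2[A:N_k]$. By \cref{diam_in_semi}, $A^c_{l_1,\dots,l_m}\subseteq S^{d}$ with $d\le (2l+1)M|R|l^m$. Therefore every coset $aH_k$ with $a\in A$ is reached within $d$ steps from $H_k$. Since $G=AH_k$, we get
\[
\operatorname{diam}_S(G/H_k)\le C(\log [G:H_k])^{m+1}.
\]
This is $o([G:H_k]^\alpha)$ for every $\alpha>0$, so $G$ fails u.c.($\alpha$) for all $\alpha$.

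The main obstacle I expect is in applying \cref{inf_g_abelian_bdd_generated_coset}(2) correctly. The minimality there must be arranged coordinate by coordinate, using \cref{once_stop_then_stablise} for each $t_i$ acting modulo the subgroup generated by the later coordinates together with $N_k$. Each strict increase then contributes a factor of at least the smallest prime in the index. I would verify carefully that the order of the coordinates matches the bounded-generation word $t_1^{k_1}\cdots t_m^{k_m}$, so that the relevant subgroups are genuinely invariant under $t_i$ when stabilisation occurs. A secondary check is that the conjugates by words of length $l_i$ in each $t_i$ lie in $S^{2l+1}$, which holds directly.
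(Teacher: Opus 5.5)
Your proposal is correct and is essentially the paper's own proof. It uses the same ingredients: the filtration of $A$ from \cref{filtration_pass_normal_subgroup_swapped}, the subgroups $N_k\rtimes\Gamma$, the lower bound $p^{l}$ on the index from \cref{inf_g_abelian_bdd_generated_coset}(2), and the diameter bound polynomial in $l$ from \cref{diam_in_semi}. Your polylogarithmic formulation and the remark $[A:N_k]\to\infty$ only make explicit why $l$ is unbounded. On the obstacle you flag: when \cref{once_stop_then_stablise} is applied to $t_i$, the normal subgroup to quotient by is just $N_k$, which is $t_i$-invariant. The finite set should be the later-coordinate conjugates $t_{i+1}^{k_{i+1}}\cdots t_m^{k_m}\cdot r$, not the subgroup they generate together with $N_k$, which need not be $t_i$-invariant.
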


\begin{proof}
Let $R \subset A$ be finite and $t_1, \dots, t_m \in \Gamma$ such that $\Gamma$ is boundedly generated by $\{t_1, \dots, t_m\}$. Then
\[
S := R \cup \{t_1^{\pm 1}, \dots, t_m^{\pm 1}\}
\]
is a symmetric generating set of $G$. By \cref{filtration_pass_normal_subgroup_swapped}, $A$ admits a filtration $(A_i)_{i \in \mathbb{N}}$ with each $A_i \trianglelefteq G$. Setting $K_i := A_i \rtimes \Gamma$, we obtain a filtration of $G$.  Using the setup in \cref{inf_g_abelian_u_c}, for each $i$, let $l_{i,1}, \dots, l_{i,m}$ be minimal integers such that $A^c_{l_{i,1}, \dots, l_{i,m}} A_i = A$, and set $l(i) = \sum_{j=1}^m l_{i,j}$. Then
\[
p^{l(i)} \le [A : A_i] = [G : K_i].
\]
On the other hand, \cref{diam_in_semi} shows that $\mathrm{diam}_S(G/K_i)$ is bounded above by a polynomial in $l(i)$. Since $l(i)$ is unbounded, it follows that $G$ does not have uniformly almost flat coset spaces.
\end{proof}

\begin{cor}\label{infinite_subgroup_rank}
Suppose \( G = A \rtimes \Gamma \) is a finitely generated group where \( \Gamma \) is polycyclic and \( A \) is abelian with infinite subgroup rank. Then \( G \) does not have uniformly almost flat coset spaces.
\end{cor}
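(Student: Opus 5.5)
We reduce to \cref{inf_g_abelian_u_c} by passing to a suitable group in $\mathcal{C}(G)$. Suppose, for a contradiction, that $G$ has u.c.$(\alpha)$. By \cref{uniform_D_alpha_passes_finte_index_subgroup} every quotient of a finite-index subgroup of $G$ also has u.c.$(\alpha)$. So it suffices to find a quotient of $G$ of the form $\bar A \rtimes \Gamma$ in which $\bar A$ is an infinite torsion abelian group of bounded exponent and the quotient admits a normal filtration with trivial intersection. Since $\Gamma$ is polycyclic, it is boundedly generated, so condition (ii) of \cref{inf_g_abelian_u_c} holds automatically.

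First, $A \in \mathfrak{B}$: it is countable abelian, $G$ is finitely generated, and $G/A \cong \Gamma$ is polycyclic. Since $A$ has infinite subgroup rank, \cref{quotient_with_inf_tor} gives a prime $p$ with $A/pA$ an infinite elementary abelian $p$-group. The subgroup $pA$ is characteristic in $A$, hence normal in $G$ and $\Gamma$-invariant. Therefore
\[
\bar G := G/pA \cong (A/pA) \rtimes \Gamma,
\]
with $\bar A := A/pA$ infinite, abelian, and of exponent $p$. This gives condition (iii). Moreover, $\bar G$ is finitely generated, and $\bar A$ is abelian with $\bar G/\bar A \cong \Gamma$ polycyclic. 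So $\bar G$ is abelian-by-polycyclic, and by the Roseblade--Jategaonkar theorem (as used in \cref{v_poly_case}) it is residually finite.

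Next I need a normal filtration of $\bar G$ with trivial intersection. Since $\bar G$ is finitely generated and residually finite, enumerate its (countably many) finite-index normal subgroups and take finite intersections. This gives a decreasing sequence of finite-index normal subgroups with trivial intersection. Because $\bar G$ is infinite, the sequence does not stabilise, and after discarding repetitions it becomes strictly decreasing. This is condition (i).

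Applying \cref{inf_g_abelian_u_c} to $\bar G = \bar A \rtimes \Gamma$ shows that $\bar G$ fails to have uniformly almost flat coset spaces. Since $\bar G$ is a quotient of $G$, this contradicts the fact that u.c. passes to quotients. The main point to check is that \cref{quotient_with_inf_tor} really applies. That requires $A\in\mathfrak{B}$, which holds via the semidirect product structure; the rest, including the finiteness of the generating set $S$ used in \cref{inf_g_abelian_u_c}, is inherited from $G$ by passing to the quotient.
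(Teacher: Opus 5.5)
Your proposal is correct and takes the same route as the paper. The paper's proof is the one-line combination of Roseblade--Jategaonkar residual finiteness, \cref{quotient_with_inf_tor}, and \cref{inf_g_abelian_u_c}, applied implicitly to the quotient $G/pA \cong (A/pA)\rtimes\Gamma$; you have spelled out the checks the paper leaves implicit, namely $A\in\mathfrak{B}$, bounded generation of $\Gamma$, and the construction of a normal filtration with trivial intersection from residual finiteness.
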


\begin{proof}
   By the Roseblade--Jategaonkar result \citep{Roseblade, JAT}, finitely generated abelian-by-polycyclic groups are residually finite.  The statement follows from \cref{quotient_with_inf_tor}   and \cref{inf_g_abelian_u_c}
\end{proof}



\section{Soluble groups: conclusion of the proof}

As outlined in \cref{into}, our strategy is to reduce to the case of abelian-by-abelian groups with a split extension. This is achieved in the following proposition.

\begin{prop} \label{reduction_non_polycyclic_structure}
Let $G$ be a finitely generated abelian-by-nilpotent group that is not polycyclic. Then there exists $\Gamma \in \mathcal{C}(G)$ such that $\Gamma \cong A \rtimes \mathbb{Z}^m$, where $A$ is an abelian group that is not finitely generated.
\end{prop}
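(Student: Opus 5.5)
We will pass to a just-non-polycyclic quotient, make the action of the nilpotent part abelian, and then split the extension over a free abelian quotient.

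\textbf{Step 1: reduce to a JNP quotient.} Since $G$ is finitely generated and not polycyclic, Zorn's lemma gives a normal subgroup $K$ maximal subject to $G/K$ not being polycyclic. Such a maximal element exists because an ascending union of normal subgroups with non-polycyclic quotients still has non-polycyclic quotient: if $G/\bigcup K_i$ were polycyclic, it would be finitely presented, so the union would be finitely normally generated and hence equal to some $K_i$. Then $G/K$ is JNP, still finitely generated and abelian-by-nilpotent, and lies in $\mathcal{C}(G)$. So we may assume $G$ is JNP. By the lemma of the previous section, $F := \operatorname{Fit}(G)$ is then abelian and $G/F$ is nilpotent, hence polycyclic. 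Since $G$ is not polycyclic, $F$ is not finitely generated.

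\textbf{Step 2: make the top abelian.} Next we want a finite-index subgroup of $G/F$ acting on $F$ through an abelian group. The tool is the argument of the last proposition of the previous section, combined with \cref{non_abelian_implies_center_has_one_with_root_of_unity}. Suppose some $x$ in the relevant central term acts on $F$ unipotently in a suitable sense, for instance acting nilpotently on $F/\operatorname{Tor}(F)$ and on the $p$-parts. Then $\langle F, x^L\rangle$ would be a normal nilpotent subgroup properly containing $F$, which is impossible. In the infinite-rank case, \cref{commute_implies_root_of_unity} is not available, so we use Hall's results instead. Hall's work shows that $F$, as a module over $\mathbb{Z}[G/F]$, is finitely generated, and the JNP condition makes it faithful for the polycyclic group $G/F$. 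One must then argue that a torsion-free nilpotent group acting faithfully with no nontrivial normal unipotent part must be virtually abelian. Granting this, we pass to a finite-index subgroup $G_1 \le G$ with $G_1/F \cong \mathbb{Z}^m$, where the virtual part is absorbed by choosing $G_1$ to contain $F$ with torsion-free abelian image. Then $F$ has finite index in the abelian-by-abelian group $G_1$'s Fitting subgroup, and $F$ is still not finitely generated; this is immediate because a finite-index passage cannot make $F$ finitely generated.

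\textbf{Step 3: split the extension.} With $G_1/F \cong \mathbb{Z}^m$ free abelian, choose lifts $t_1,\dots,t_m \in G_1$ of a basis. These need not commute, since their commutators lie in $F$. The plan is to use the Robinson--Wilson cohomological result \citep{Robin_Wilson_jnp}, which says that for JNP-type modules the relevant $H^2(\mathbb{Z}^m, F)$ is virtually trivial after passing to finite index. If needed, we first replace $G$ by a quotient by a $G$-invariant subgroup of $F$ so that $F$ stays non-finitely-generated but becomes cohomologically well behaved, for example $F/pF$ when it is infinite, using \cref{quotient_with_inf_tor}. We then pass to $\langle F, t_1^{N},\dots,t_m^{N}\rangle$ for suitable $N$ so that the extension class dies and the extension splits as $A \rtimes \mathbb{Z}^m$. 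By \cref{finite_index_quotient}, the resulting $\Gamma$ lies in $\mathcal{C}(G)$.

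\textbf{Main obstacle.} I expect Step 3 to be the hardest: showing that a finite-index passage, possibly combined with a quotient, kills the extension class while keeping $A$ non-finitely generated. This is exactly where the cited work of Robinson and Wilson and the theory of modules over polycyclic group rings is needed.
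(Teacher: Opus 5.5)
Your Step 1 is sound. The Zorn argument works because a polycyclic quotient of a finitely generated group is finitely presented, so its kernel is finitely normally generated; this gives a JNP quotient. The paper's lemma then gives $F=\operatorname{Fit}(G)$ abelian, $G/F$ nilpotent, and $F$ not finitely generated.

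The genuine gap is that Steps 2 and 3 are the whole content of the proposition, and neither is established.

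\textbf{Step 2.} You explicitly grant that the Fitting quotient is virtually abelian. The tools you point to do not reach this in general. \cref{commute_implies_root_of_unity}, \cref{non_abelian_implies_center_has_one_with_root_of_unity} and the last proposition of Section 3 work with eigenvalues and determinants in $\mathrm{GL}_n$, so they need $F$ to have finite rank. That proposition also assumes the extension already splits with a torsion-free nilpotent complement. In the infinite-rank case you give no substitute. Faithfulness plus finite generation of $F$ as a $\mathbb{Z}[G/F]$-module cannot suffice: a Heisenberg group acts faithfully on its cyclic regular module and is not virtually abelian. The implication you ``grant'' is exactly the hard part.

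\textbf{Step 3.} The splitting is only a plan. You do not say which result of Robinson--Wilson kills the extension class. You also give no reason why the class should restrict to zero on some $N\mathbb{Z}^m$. Even torsion in $H^2$ would not be enough, since it is corestriction after restriction that is multiplication by the index.

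Your fallback of passing to $F/pF$ is vacuous in your setting. Since $G$ is JNP, $G/pF$ is polycyclic whenever $pF\neq 1$. So $F/pF$ can be infinite only if $pF=1$ already, and no better-behaved non-polycyclic quotient is produced.

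\textbf{The missing ingredient.} The paper uses the structure theory of primitive JNP groups, which gives both facts at once:
\begin{itemize}
\item By \citep[p.208, (4.4)]{Robin_Wilson_jnp} there is a PJNP group $Q\in\mathcal{C}(G)$. Reaching the primitive case uses finite-index subgroups as well as quotients, which is why the statement is phrased with $\mathcal{C}(G)$ rather than with quotients only.
\item $\operatorname{Fit}(Q)$ is abelian by (2.3) and is not finitely generated.
\item By (2.4) and (5.1), $Q$ contains a free abelian $X\cong\mathbb{Z}^m$ with $\operatorname{Fit}(Q)\cap X=1$ and $\operatorname{Fit}(Q)X$ of finite index in $Q$.
\end{itemize}
Then $\operatorname{Fit}(Q)X\cong \operatorname{Fit}(Q)\rtimes\mathbb{Z}^m$ lies in $\mathcal{C}(G)$ by \cref{finite_index_quotient}.
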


In the following proof, we will make use of the terms \textit{just non-polycyclic (JNP)} group and \textit{primitive just non-polycyclic (PJNP)} group. Recall a group $G$ is a JNP group if $G$ is not polycyclic, but every proper quotient of $G$ is polycyclic. Such groups have been studied extensively by Robinson and Wilson \citep{Robin_Wilson_jnp} using techniques from group cohomology \citep{MR679154} and the theory of modules over polycyclic group rings \citep{peter_hall_finiteness_certain_soluble_groups}. A PJNP group is a JNP group satisfying additional structural conditions (see \citep[p.208]{Robin_Wilson_jnp}); we shall primarily utilize the structural properties of PJNP groups.

\begin{proof}
    Since $G$ is not polycyclic, there exists  $Q \in \mathcal{C}(G)$ that is PJNP by \citep[p.208, (4.4)]{Robin_Wilson_jnp}. The Fitting subgroup $\operatorname{Fit}(Q)$ is abelian by \citep[p.197, (2.3)]{Robin_Wilson_jnp} and cannot be finitely generated since $Q$ is not polycyclic.    We first show that the Fitting quotient of $Q$ is nilpotent. Since $G$ is abelian-by-nilpotent, $Q$ inherits this property by \cref{class_closure_abelian_by_nilpotent}; thus, there exists an abelian normal subgroup $K \trianglelefteq Q$ such that $Q/K$ is nilpotent. In particular, this implies that $K \subseteq \operatorname{Fit}(Q)$. By the Third Isomorphism Theorem, $Q/\operatorname{Fit}(Q) \cong (Q/K)/(\operatorname{Fit}(Q)/K)$, which implies that $Q/\operatorname{Fit}(Q)$ is a quotient of a nilpotent group and is therefore nilpotent.     By \citep[p.210, (2.4) \& (5.1)]{Robin_Wilson_jnp}, $Q$ has a free abelian subgroup $X \cong \mathbb{Z}^m$ such that $\operatorname{Fit}(Q)X$ is a finite-index subgroup of $Q$ and $\operatorname{Fit}(Q) \cap X = \{1\}$. We are done by \cref{finite_index_quotient}.
\end{proof}
\begin{rem}
The following quick example demonstrates how passing from $G$ to $\mathcal{C}(G)$ simplifies the group we are working with in the case where we have infinite subgroup rank. Consider the group $G = K \rtimes H$, where $K = \bigoplus_{i \in \mathbb{Z}} \mathbb{Z}_3$ and $H$ is the discrete Heisenberg group:
\[
H = \langle x, y \mid [[x, y], x] = [[x, y], y] = 1 \rangle.
\]
The action of $H$ on the basis $\{e_i\}_{i \in \mathbb{Z}}$ of $K$ is given by $x \cdot e_i = e_{i+1}$ and $y \cdot e_i = 2^i e_i$. It follows that the commutator $z = [x, y]$ acts as the uniform scalar $z \cdot e_i = 2e_i$.

It is clear that $G_0 = K \rtimes H^2$ is a finite-index subgroup of $G$. By quotienting $G_0$ by the normal closure $\langle \langle z^2 \rangle \rangle_{G_0}$, we obtain a metabelian group containing the infinite-rank subgroup $\bigoplus_{i \in \mathbb{Z}} \mathbb{Z}_3$.
\end{rem}

\begin{rem}
    Note that \cref{reduction_non_polycyclic_structure} does not generalise to abelian-by-polycyclic groups; see \citep[p.~227, (7.9)]{Robin_Wilson_jnp}.
\end{rem}

\begin{cor} \label{abelin_by_nil_case}
Let $G$ be a finitely generated abelian-by-nilpotent group that is not polycyclic. Then $G$ does not have the u.c.\ property.
\end{cor}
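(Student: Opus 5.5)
We will argue by contradiction: suppose $G$ is finitely generated, abelian-by-nilpotent, not polycyclic, and has u.c.$(\alpha)$ for some $\alpha\in(0,1]$. By \cref{reduction_non_polycyclic_structure} there is $\Gamma\in\mathcal{C}(G)$ with $\Gamma\cong A\rtimes\mathbb{Z}^m$ and $A$ abelian but not finitely generated. By \cref{uniform_D_alpha_passes_finte_index_subgroup}, $\Gamma$ also has u.c.$(\alpha)$. Note that $\Gamma$ is finitely generated, since it is a quotient of a finite-index subgroup of a finitely generated group. It is abelian-by-polycyclic, so $A\in\mathfrak{B}$.

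We then split according to the subgroup rank of $A$.

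\textbf{Infinite subgroup rank.} \cref{infinite_subgroup_rank} applies directly, with the polycyclic complement $\mathbb{Z}^m$. It shows that $\Gamma$ fails u.c., which is a contradiction.

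\textbf{Finite subgroup rank.} First we reduce to the torsion-free case. By \cite[Lemma 8]{peter_hall_finiteness_certain_soluble_groups}, or directly from finite rank, the torsion subgroup $\operatorname{Tor}(A)$ has bounded exponent and finite rank, so it is finite. It is characteristic in $A$, hence normal in $\Gamma$. The quotient $\Gamma/\operatorname{Tor}(A)\cong (A/\operatorname{Tor}(A))\rtimes\mathbb{Z}^m$ again lies in $\mathcal{C}(G)$ and has u.c.$(\alpha)$. Its base $A/\operatorname{Tor}(A)$ is not finitely generated, since otherwise $A$ would be finitely generated. So we may assume $A$ is torsion-free of finite rank $n$.

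Next, \cref{thm:matrix_rep}, applied with $\Gamma_0=\mathbb{Z}^m$ (which is boundedly generated), gives commuting matrices $M_1,\dots,M_m\in\mathrm{GL}_n(\mathbb{Q})$ with $\Gamma\cong K(M_1,\dots,M_m)\rtimes\mathbb{Z}^m$. By \cref{A_by_Z_finite_rank}, u.c.\ forces every eigenvalue of every $M_i$ to be a root of unity, and then $\Gamma$ is virtually nilpotent. But a finitely generated virtually nilpotent group is polycyclic-by-finite, so every subgroup is finitely generated. In particular $A$ is finitely generated, which is a contradiction.

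In both cases we reach a contradiction, so $G$ does not have the u.c.\ property.

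The main point needing care is the torsion reduction in the finite-rank case. We must check that $\operatorname{Tor}(A)$ is finite, and that passing to $\Gamma/\operatorname{Tor}(A)$ keeps the split $A\rtimes\mathbb{Z}^m$ form with a non-finitely-generated base. Both follow because a torsion abelian group of finite rank and bounded exponent is finite. Everything else is a direct citation of the earlier results.
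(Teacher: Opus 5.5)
Your proposal is correct and follows the paper's proof: you reduce via \cref{reduction_non_polycyclic_structure} and \cref{uniform_D_alpha_passes_finte_index_subgroup} to a split extension $A\rtimes\mathbb{Z}^m$ with $A$ not finitely generated, handle infinite subgroup rank by \cref{infinite_subgroup_rank}, and handle finite rank by \cref{thm:matrix_rep} and \cref{A_by_Z_finite_rank}, spelling out the torsion reduction that the paper only asserts. One wording point: finite rank alone does not bound the exponent of $\operatorname{Tor}(A)$ (the Pr\"ufer group $\mathbb{Z}[1/p]/\mathbb{Z}$ has rank $1$), so Hall's bounded-exponent lemma is genuinely needed, together with finite rank, to conclude that $\operatorname{Tor}(A)$ is finite.
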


\begin{proof}
By \cref{uniform_D_alpha_passes_finte_index_subgroup} and \cref{reduction_non_polycyclic_structure}, it suffices to consider the case where $G$ is abelian-by-abelian with split extension, which can be handled by  \cref{A_by_Z_finite_rank} and \cref{infinite_subgroup_rank}.
\end{proof}

\begin{thm}\label{uc_implies_polycyclic}
A finitely generated soluble group with uniformly almost flat coset spaces is polycyclic.
\end{thm}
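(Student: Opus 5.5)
We argue by contradiction. Suppose $G$ is finitely generated soluble with the u.c.\ property but not polycyclic. The plan is to pass, within the class $\mathcal{C}(G)$, to a group that is abelian-by-nilpotent and still non-polycyclic, and then apply \cref{abelin_by_nil_case}. The u.c.\ property is inherited by every member of $\mathcal{C}(G)$ by \cref{uniform_D_alpha_passes_finte_index_subgroup}, and $\mathcal{C}(G)$ is closed under iterated passage to quotients and finite-index subgroups by \cref{finite_index_quotient}. So it suffices to find a single non-polycyclic abelian-by-nilpotent group $Q\in\mathcal{C}(G)$.

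To find $Q$, I would use the theory of just-non-polycyclic groups of Robinson and Wilson, as already done in the proof of \cref{reduction_non_polycyclic_structure}. The steps are as follows.
\begin{enumerate}
\item Since $G$ is finitely generated and not polycyclic, a Zorn's lemma argument yields a normal subgroup $N$ maximal subject to $G/N$ being non-polycyclic. Here finite generation is used: the union of a chain of normal subgroups with polycyclic quotients has finitely presented-type behaviour in the sense that a polycyclic quotient is finitely presented, so maximal elements exist. Then $G/N$ is JNP and soluble.
\item By \citep[p.208, (4.4)]{Robin_Wilson_jnp}, some $Q\in\mathcal{C}(G/N)\subseteq\mathcal{C}(G)$ is PJNP.
\item For this $Q$, $\operatorname{Fit}(Q)$ is abelian by \citep[p.197, (2.3)]{Robin_Wilson_jnp}.
\item By \citep[p.210, (2.4) \& (5.1)]{Robin_Wilson_jnp}, $Q$ has a free abelian subgroup $X\cong\mathbb{Z}^m$ with $\operatorname{Fit}(Q)X$ of finite index in $Q$ and $\operatorname{Fit}(Q)\cap X=1$.
\end{enumerate}
The subgroup $\operatorname{Fit}(Q)\rtimes X$ then lies in $\mathcal{C}(G)$. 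It is abelian-by-abelian, hence abelian-by-nilpotent. It is non-polycyclic, since $\operatorname{Fit}(Q)$ cannot be finitely generated: otherwise $Q$ would be polycyclic-by-finite, and soluble polycyclic-by-finite groups are polycyclic.

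At that point, \cref{abelin_by_nil_case}, or directly \cref{A_by_Z_finite_rank} together with \cref{infinite_subgroup_rank}, shows that $\operatorname{Fit}(Q)\rtimes X$ fails the u.c.\ property. This contradicts \cref{uniform_D_alpha_passes_finte_index_subgroup}.

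One small point in the finite-rank branch needs care. \cref{A_by_Z_finite_rank} concerns $K(M_1,\dots,M_m)\rtimes\mathbb{Z}^m$, so I first kill the finite torsion subgroup of $\operatorname{Fit}(Q)$, which is characteristic, and then identify the quotient with such a $K$ via \cref{thm:matrix_rep}. That corollary then forces $\operatorname{Fit}(Q)\rtimes X$ to be virtually nilpotent. Since $\operatorname{Fit}(Q)\rtimes X$ is finitely generated, it would then be polycyclic, a contradiction.

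The main obstacle is making sure the cited Robinson--Wilson structure results apply to a general soluble JNP quotient, not only to abelian-by-nilpotent ones. In particular, I need (4.4) to produce a PJNP member of $\mathcal{C}(G)$, and I need the splitting (5.1) to hold in that generality. If (5.1) needs the Fitting quotient to be nilpotent, I would first try to pass to the finite-index subgroup of $Q$ on which the action on $\operatorname{Fit}(Q)$ is nilpotent-by-abelian. Failing that, I would restrict to the centre of $Q/\operatorname{Fit}(Q)$ acting on $\operatorname{Fit}(Q)$, as in \cref{non_abelian_implies_center_has_one_with_root_of_unity}.
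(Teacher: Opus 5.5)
There is a genuine gap at step 4, which is the point you flag as the main obstacle, and neither of your fallbacks closes it. The conclusion of step 4 forces $Q/\operatorname{Fit}(Q)$ to be virtually abelian, because $X\cong \operatorname{Fit}(Q)X/\operatorname{Fit}(Q)$ would be a finite-index abelian subgroup of it. For a general soluble PJNP group you only know that $Q/\operatorname{Fit}(Q)$ is polycyclic.

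A quick way to see that something must go wrong: steps 1--4 never use the u.c.\ hypothesis. They would therefore prove the conclusion of \cref{reduction_non_polycyclic_structure} for every finitely generated soluble non-polycyclic group. But a soluble JNP group is abelian-by-polycyclic: its last non-trivial derived term is abelian, and the corresponding quotient is proper, hence polycyclic. The paper points out, citing \citep[p.~227, (7.9)]{Robin_Wilson_jnp}, that \cref{reduction_non_polycyclic_structure} fails for abelian-by-polycyclic groups. This is why the splitting from (5.1) is only used once the Fitting quotient is known to be nilpotent.

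Your fallbacks do not supply this. Passing to a finite-index subgroup of $Q$ only makes the top nilpotent-by-abelian, not nilpotent or virtually abelian. Restricting to the preimage of the centre of $Q/\operatorname{Fit}(Q)$ generally gives an infinite-index subgroup. Such a subgroup is not in $\mathcal{C}(G)$, so it need not inherit the u.c.\ property, and the centre may be trivial anyway.

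The missing idea is to use the u.c.\ hypothesis on the polycyclic top, via \cref{polycyclic_by_finite_group_uda_iff_vn}, before invoking Robinson--Wilson. The paper does this by induction on derived length. The quotient $G/G^{(d)}$ has the u.c.\ property and is polycyclic by induction, hence virtually nilpotent. So some finite-index $H\geq G^{(d)}$ is abelian-by-nilpotent and non-polycyclic, and \cref{abelin_by_nil_case} together with \cref{uniform_D_alpha_passes_finte_index_subgroup} gives the contradiction.

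Your JNP reduction can be repaired the same way without induction. Let $J=G/N$ be your soluble JNP quotient and $A\neq 1$ its last non-trivial derived term. Then $J/A\in\mathcal{C}(G)$ is polycyclic with the u.c.\ property, hence virtually nilpotent. So $J$ has a finite-index subgroup that is abelian-by-nilpotent, and it is non-polycyclic because a soluble polycyclic-by-finite group is polycyclic. Now \cref{abelin_by_nil_case} applies directly. The PJNP and splitting steps are then carried out inside that corollary, where the Fitting quotient is nilpotent and they are legitimate.
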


\begin{proof}
We argue by induction on the derived length $d$ of $G$. The case $d=1$ is clear, since a finitely generated abelian group is polycyclic. Assume the result holds for derived length $\le d$, and let $G$ have derived length $d+1$ with uniformly almost flat coset spaces. Then $G^{(d)}$ is abelian and $G/G^{(d)}$ is finitely generated soluble of derived length $d$ with uniformly almost flat coset spaces. By the induction hypothesis, $G/G^{(d)}$ is polycyclic. According to Theorem \cref{polycyclic_by_finite_group_uda_iff_vn} , $G/G^{(d)}$ is virtually nilpotent.Suppose for contradiction that $G$ is not polycyclic. Then $G^{(d)}$ is not finitely generated.  Furthermore, there exists a finite-index subgroup $H \le G$ with $G^{(d)} \le H$ such that $H/G^{(d)}$ is nilpotent. Thus $H$ is abelian-by-nilpotent but not polycyclic, so by \cref{abelin_by_nil_case}, $H$ does not have uniformly almost flat coset spaces. We are done by \cref{uniform_D_alpha_passes_finte_index_subgroup}.
\end{proof}

\begin{proof}[Proof of \cref{main_thm}]
The statement  follows immediately from \cref{uc_implies_polycyclic} and \cref{polycyclic_by_finite_group_uda_iff_vn}.
\end{proof}

\section{Uniformly almost flatness in finitely generated groups} \label{last_section}

\subsection{Groups with the u.q. property virtually have only soluble finite quotients}


The aim of this subsection is to show that groups with the u.q. property have a finite-index subgroup whose finite quotients are all soluble. This will allow us to deduce \cref{main_thm_residually_soluble} and \cref{main_thm_linear}.  We begin by recalling some standard results on minimal normal subgroups and include their proofs for the convenience of the reader.

\begin{lem}\label{min_group}
Let \(G\) be a group with a minimal normal subgroup $M$. 
   If $L \trianglelefteq G$, then either $M \leqslant L$ or $M \cap L = 1$, in which case $\langle M, L \rangle = M \times L$.


\end{lem}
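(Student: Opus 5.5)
The plan is to use only minimality of $M$ together with the facts that intersections of normal subgroups are normal and that normal subgroups with trivial intersection commute elementwise. First, since both $M$ and $L$ are normal in $G$, the intersection $M \cap L$ is a normal subgroup of $G$ contained in $M$. By minimality of $M$ this forces $M \cap L = M$ or $M \cap L = 1$. In the first case $M \leqslant L$, so only the second case needs further work.

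Assume now that $M \cap L = 1$. For $m \in M$ and $l \in L$, look at the commutator $[m,l] = m^{-1}l^{-1}ml$. Grouping it as $m^{-1}(l^{-1}ml)$ shows $[m,l] \in M$, because $M$ is normal. Grouping it as $(m^{-1}l^{-1}m)l$ shows $[m,l] \in L$, because $L$ is normal. Hence $[m,l] \in M \cap L = 1$, so every element of $M$ commutes with every element of $L$.

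It follows that $ML = LM$ is a subgroup, and in fact $ML = \langle M, L\rangle$. The multiplication map $M \times L \to ML$, $(m,l) \mapsto ml$, is a homomorphism because the two factors commute. It is surjective by definition of $ML$. Its kernel consists of pairs with $ml = 1$, which gives $m = l^{-1} \in M \cap L = 1$, so the kernel is trivial. This yields $\langle M, L\rangle \cong M \times L$, as an internal direct product.

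Nothing here is a genuine obstacle. The one point that needs care is the commutator computation, since it is what upgrades ``trivial intersection'' to ``direct product''. Note that $M \ne 1$ is implicit in the definition of a minimal normal subgroup but is not used.
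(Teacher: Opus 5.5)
Your proof is correct and follows the same route as the paper: you apply minimality of $M$ to the normal subgroup $M\cap L$, and in the trivial-intersection case you use $[M,L]\leqslant M\cap L=1$ to get $\langle M,L\rangle = ML = M\times L$. The only difference is that you spell out the commutator computation and the isomorphism $M\times L\to ML$, which the paper leaves implicit.
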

 
\begin{proof}
Suppose that $G$ has a minimal normal subgroup $M$, let \(L\unlhd G\). Since \(M\cap L\unlhd G\) and \(M\cap L\leq M\), the minimality of \(M\) implies that either \(M\cap L=1\) or \(M\leq L\). In the former case, \([M,L]\leq M\cap L=1\), so \(M\) and \(L\) centralize one another. Hence \(\langle M,L\rangle=ML=M\times L\).



\end{proof}


A group is called \textit{characteristically simple} if it has no proper nontrivial characteristic subgroup. It is clear that a minimal normal subgroup must be characteristically simple.

\begin{lemma}\label{lem:char_simple_direct_power}
Let \(F\) be a finite characteristically simple group. Then
\[
F \cong T^n
\]
for some finite simple group \(T\) and some \(n \geq 1\).
\end{lemma}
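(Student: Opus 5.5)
I would prove this by strong induction on $|F|$, using the standard argument through a minimal normal subgroup. If $F$ is trivial, there is nothing to say; in the paper's use, $F$ is a nontrivial minimal normal subgroup, so I assume $F \neq 1$. I choose a minimal normal subgroup $T$ of $F$, which exists by finiteness.

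The first step is to form the subgroup
\[
N := \langle \varphi(T) : \varphi \in \operatorname{Aut}(F) \rangle,
\]
generated by all automorphic images of $T$. It is nontrivial and characteristic in $F$, so $N = F$ because $F$ is characteristically simple. Each $\varphi(T)$ is again a minimal normal subgroup of $F$.

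Next I select a maximal family $T_1 = T, T_2, \dots, T_n$ of automorphic images such that the product $T_1 T_2 \cdots T_n$ is direct. Such a family exists by finiteness. I then apply \cref{min_group} with $L = T_1 \cdots T_n$, which is normal in $F$. For any other image $\varphi(T)$, either $\varphi(T) \leqslant L$, or $\varphi(T) \cap L = 1$ and the product $L \times \varphi(T)$ is direct. The second case contradicts maximality. Hence every $\varphi(T)$ lies in $L$, so $L = N = F$, and therefore $F = T_1 \times \cdots \times T_n$ with every $T_i \cong T$.

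It remains to show that $T$ is simple, and this is the step I regard as the main obstacle. Let $1 \neq K \trianglelefteq T$. Since $F = T \times T_2 \times \cdots \times T_n$ and every other factor centralizes $T$, the subgroup $K$ is normal in $F$. Minimality of $T$ as a normal subgroup of $F$ then forces $K = T$. So $T$ is simple and $F \cong T^n$.

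The subtle point in that last step is exactly that normality of $K$ in $T$ upgrades to normality in $F$. This requires the direct product decomposition to have been established first, which is why the decomposition step must come before the simplicity step. The argument is otherwise self-contained apart from \cref{min_group}.
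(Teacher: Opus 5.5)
Your proof is correct and is essentially the paper's argument. You take a minimal normal subgroup $T$, choose a maximal family of automorphic images of $T$ whose product is direct, use \cref{min_group} to show that this product contains every automorphic image and is therefore characteristic and equal to $F$, and deduce simplicity of $T$ because a normal subgroup of a direct factor is normal in $F$. The differences are cosmetic: the strong induction on $|F|$ you announce is never used and can be dropped, and your choice $T_1 = T$ makes explicit that $T$ itself is a direct factor, which the paper leaves implicit.
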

\begin{proof}
Let $T$ be a minimal normal subgroup of $F$. For any automorphism $\alpha \in \operatorname{Aut}(F)$, $T^\alpha$ is also a minimal normal subgroup of $F$. Let $\{T_1, \ldots, T_m\}$ be a maximal collection of distinct automorphic images of $T$ such that their product is direct:
\[
L := \langle T_1, \ldots, T_m \rangle = T_1 \times \cdots \times T_m.
\]
We claim that $L$ contains every automorphic image of $T$. Suppose there exists an image $T^\alpha$ such that $T^\alpha \nsubseteq L$. Since $T^\alpha$ is a minimal normal subgroup, \cref{min_group} implies that 

\[
\langle L, T^\alpha \rangle = L \times T^\alpha = T_1 \times \cdots \times T_m \times T^\alpha,
\]
which contradicts the maximality of $\{T_1, \ldots, T_m\}$. Thus, every automorphic image of $T$ is contained in $L$. Hence, $L$ is invariant under all automorphisms of $F$, so $L$ is a characteristic subgroup. As $F$ is characteristically simple and $L \neq 1$, we must have $L = F$, so $F = T_1 \times \cdots \times T_m$. To show $T$ is simple, let $N \unlhd T$. Since $T$ is a direct factor of $F$, $N$ is normal in $F$. By the minimality of $T$, $N=1$ or $N=T$. Thus $T$ is simple. Since each $T_i \cong T$, we have $F \cong T^m$.
\end{proof}


We will mainly be interested in non-abelian minimal normal subgroups. The following lemma gives a criterion for such a subgroup to be unique. 
\begin{lemma} [Uniqueness of Minimal Normal Subgroups] \label{lem_uniquenss_min}
Let $G$ be a group and $M \trianglelefteq G$ be a  minimal normal subgroup of $G$. Then $M$ is non-abelian and the unique minimal normal subgroup of $G$ if and only if $C_G(M) = 1$.
\end{lemma}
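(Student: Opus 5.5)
We prove the two implications separately, using only \cref{min_group} and elementary facts about centralisers. Throughout, $C_G(M)$ is normal in $G$, since $M$ is normal and conjugation permutes centralisers of normal subgroups.

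For the forward direction, assume $M$ is non-abelian and is the unique minimal normal subgroup of $G$, and suppose for contradiction that $C_G(M)\neq 1$. We need $C_G(M)$ to contain a minimal normal subgroup of $G$. When $G$ is finite, which is the case of interest, this holds because any nontrivial normal subgroup contains one. By uniqueness, that minimal normal subgroup must be $M$, so $M\le C_G(M)$. Then $M$ centralises itself and is abelian, a contradiction. Hence $C_G(M)=1$.

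For the converse, assume $C_G(M)=1$. First, $M$ is non-abelian: if it were abelian, then $M\le C_G(M)=1$, contradicting $M\neq 1$. Next, let $L$ be any minimal normal subgroup of $G$. By \cref{min_group}, either $M\le L$ or $M\cap L=1$. In the second case $M$ and $L$ centralise each other, so $L\le C_G(M)=1$, which is impossible. Thus $M\le L$, and minimality of $L$ gives $L=M$, so $M$ is the unique minimal normal subgroup.

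The only delicate point is the existence of a minimal normal subgroup inside $C_G(M)$ in the forward direction. This is automatic for finite $G$, and all applications in this section concern finite quotients. For infinite $G$ one should either state the lemma for finite groups or add the hypothesis that every nontrivial normal subgroup contains a minimal normal one. I would state it for finite $G$; without such a hypothesis the forward direction can fail for infinite groups.
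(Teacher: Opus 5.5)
Your proof is correct and follows the same route as the paper's. In the forward direction, a minimal normal subgroup inside the normal subgroup $C_G(M)$ must equal $M$, which forces $M$ to be abelian; in the converse, \cref{min_group} shows that any other minimal normal subgroup would lie in $C_G(M)=1$. Your two additions are also correct and fill small gaps in the paper's argument. First, the paper's converse never checks that $M$ is non-abelian. Second, its forward direction silently uses finiteness, and this is genuinely needed: in $A_5\times\mathbb{Z}$ the factor $A_5$ is the unique minimal normal subgroup, yet its centraliser is $\mathbb{Z}$. This is harmless for the paper, since the lemma is only applied to finite quotients.
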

Another equivalent condition can be deduced immediately from the following elementary fact: Let $M$ be a normal subgroup of a group $G$. The conjugation action of $G$ on $M$, denoted by $G \curvearrowright M$, is faithful if and only if $C_G(M) = \ker(G \curvearrowright M) = \{1\}$.

\begin{proof}
We start with \((\implies)\) direction.  Suppose $M$ is the unique minimal normal subgroup of $G$. Since $M \trianglelefteq G$, we know $C_G(M) \trianglelefteq G$. If $C_G(M) \neq 1$, then $C_G(M)$ must contain a minimal normal subgroup $N$ of $G$. By our hypothesis of uniqueness, $N = M$. This implies $M \leq C_G(M)$, which means $M$ is abelian, as elements of $M$ commute with each other. This contradicts the assumption that $M$ is non-abelian. Therefore, $C_G(M) = 1$.
To see the \((\impliedby)\) direction, suppose \(C_G(M)=1\). Let \(N\) be any minimal normal subgroup of \(G\).
By \cref{min_group}, either \(M\cap N=1\) or \(M\leq N\). If
\(M\cap N=1\), then \([M,N]=1\), and hence \(N\leq C_G(M)=1\), a
contradiction. Therefore \(M\leq N\). By the minimality of \(N\), we must have 
$M=N$.
\end{proof}

Let \(N \trianglelefteq M \leq G\). We say that a subgroup \(H \leq G\) \emph{centralizes} the factor \(M/N\) if \([m,h]\in N\) for every \(m\in M\) and \(h\in H\); equivalently, \([M,H]\leq N\). We have the following observations:

\begin{lemma}[Centralization of Factors]
\label{lem:centralization_of_factors}
Let \(K\trianglelefteq M\) with \(K\leq N\) and \(K\leq H\). Then:
\begin{enumerate}[(i)]
\item \(H\) centralizes \(M/N\) if and only if \(H/K\) centralizes \((M/K)/(N/K)\).
\item Under the conjugation action of \(G\) on elements of \(M/N\), \(H\) centralizes \(M/N\) if and only if \(H\leq \ker(G\curvearrowright M/N)\).
\item If \(H\) centralizes \(M/N\), then \((H\cap M)/(H\cap N)\) is abelian.
\end{enumerate}
\end{lemma}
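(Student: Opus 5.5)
All three parts are unwinding definitions, so I plan to check each one directly at the level of elements, using only the commutator identity $[m,h]=m^{-1}h^{-1}mh$ and the correspondence theorem. Throughout I read the standing hypotheses as $N\trianglelefteq M\le G$ and $H\le G$. I take $K\trianglelefteq M$ with $K\le N\cap H$, so that $M/K$ and $N/K$ make sense. I interpret $H/K$ as the image $HK/K$ acting on $M/K$ by conjugation; this is well defined because conjugation by elements of $H$ preserves $K$ in the situations where the lemma is used (in particular whenever $K\trianglelefteq G$, which I would state explicitly as the intended setting).

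For (i), I would compute the commutator in $M/K$:
\[
[mK,hK]=[m,h]K .
\]
By the correspondence theorem, $N/K$ is normal in $M/K$, and $(M/K)/(N/K)\cong M/N$ via $mK\cdot (N/K)\mapsto mN$. Hence $[mK,hK]\in N/K$ if and only if $[m,h]K\subseteq N$, which holds if and only if $[m,h]\in N$, because $K\le N$. Quantifying over all $m\in M$ and $h\in H$ gives the equivalence.

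For (ii), I would observe that $G$ acts on $M/N$ by $g\cdot mN=g^{-1}mgN$. This is well defined as long as $G$ normalises $M$ and $N$, which again is the intended setting (for instance $M,N\trianglelefteq G$). An element $h$ fixes every coset $mN$ exactly when $h^{-1}mh\,m^{-1}\in N$ for all $m$. Since $N\trianglelefteq M$, this is equivalent to $[m,h]\in N$, because the two commutators differ by inversion and conjugation within $M$. So $h\in\ker(G\curvearrowright M/N)$ if and only if $[M,h]\le N$, and taking all $h\in H$ gives the claim.

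For (iii), I first note that $H\cap N=(H\cap M)\cap N$ is normal in $H\cap M$, since $N\trianglelefteq M$. Then, for $x,y\in H\cap M$, the commutator $[x,y]$ lies in $[M,H]\le N$, because $x\in M$ and $y\in H$. It also lies in $H\cap M$, because both $x$ and $y$ do. Hence $[x,y]\in H\cap N$, so the quotient $(H\cap M)/(H\cap N)$ is abelian.

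The only genuine obstacle is bookkeeping: making sure that the normality needed for the quotients and the action to be well defined is available. I would handle this by stating at the outset that $N\trianglelefteq M$ and that $M$, $N$ and $K$ are normalised by $G$ (or at least by $H$). Under those assumptions all three verifications are one-line computations.
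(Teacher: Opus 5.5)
Your proposal is correct and follows the paper's proof: (i) is the element-level form of the identity $[H/K,M/K]=[H,M]K/K$ combined with the correspondence theorem, (ii) is a direct unwinding of the definition of the kernel of the conjugation action, and (iii) is exactly the paper's observation that $[H\cap M,H\cap M]\le [M,H]\cap H\le H\cap N$. Your explicit normality assumptions make precise what the paper leaves implicit, namely $N\trianglelefteq M$ and that $M$, $N$, $K$ are normalised by $G$; this matches the application, where $K$ is a finite-index normal subgroup of $G$.
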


\begin{proof}
For (i), this follows from the correspondence theorem and the identity
\([H/K,M/K]=[H,M]K/K\).

For (ii), this follows immediately from the definitions, since \(H\) centralizes \(M/N\) if and only if every element of \(H\) acts trivially on \(M/N\).

For (iii), since \(H\) centralizes \(M/N\), we have \([M,H]\leq N\). Hence
\([H\cap M,H\cap M]\leq [M,H]\leq N\). Since
\([H\cap M,H\cap M]\leq H\), we obtain
\([H\cap M,H\cap M]\leq H\cap N\). Therefore,
\((H\cap M)/(H\cap N)\) is abelian.
\end{proof}

The following argument is adapted from the proof of \citep[Theorem 1.10]{eberhard2026diameterboundsarbitraryfinite}.

 \begin{prop} \label{bound_on_non_ablina_normal_implies_virtualy_every_finite_quotietn_soluble}
Let $G$ be a finitely generated group. Suppose there exists a constant $C < \infty$ such that every non-abelian minimal normal subgroup of every non-trivial finite quotient of $G$ has order at most $C$. Then $G$ contains a finite-index subgroup $H$ such that every finite quotient of $H$ is soluble.
\end{prop}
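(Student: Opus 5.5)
Since $G$ is finitely generated, say by $d$ elements, it has only finitely many homomorphisms to any fixed finite group, and hence only finitely many normal subgroups of index at most any given bound. The plan is to let $H$ be the intersection of the kernels of all homomorphisms $\varphi\colon G\to\operatorname{Aut}(T^n)$, where $T$ ranges over the (finitely many) non-abelian finite simple groups and $n\ge 1$ over exponents with $|T|^n\le C$. There are finitely many such targets, each finite, and $G$ has finitely many homomorphisms to each, so $H$ is a finite intersection of finite-index normal subgroups. Thus $H\trianglelefteq G$ has finite index.

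Next, suppose some finite quotient of $H$ is insoluble, and aim for a contradiction. Let $K\trianglelefteq H$ be of finite index with $H/K$ insoluble. Replace $K$ by its normal core in $G$, $K_0=\bigcap_{g\in G}K^g$, which has finite index and is normal in $G$. Then $H/K_0$ surjects onto $H/K$, so it is still insoluble. Now work in the finite group $\bar G=G/K_0$, which contains the insoluble normal subgroup $\bar H=H/K_0$. Take a chief series of $\bar G$ passing through $\bar H$. Since $\bar H$ is insoluble, some chief factor $M/N$ with $N<M\le \bar H$ is non-abelian, where $M,N\trianglelefteq \bar G$.

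Then $M/N$ is a non-abelian minimal normal subgroup of the finite quotient $\bar G/N$ of $G$. By hypothesis $|M/N|\le C$, and by \cref{lem:char_simple_direct_power} we have $M/N\cong T^n$ with $T$ non-abelian simple and $|T|^n\le C$. The conjugation action gives a homomorphism $G\to \bar G/N\to\operatorname{Aut}(M/N)\cong\operatorname{Aut}(T^n)$. By construction $H$ lies in its kernel, so $H$ centralizes $M/N$ in the sense of \cref{lem:centralization_of_factors}(ii). But $M\le \bar H$, so the preimage of $M$ lies in $H$. Hence \cref{lem:centralization_of_factors}(iii) (with (i) to pass to $\bar G$) gives that $M/N=(\bar H\cap M)/(\bar H\cap N)$ is abelian. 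This contradicts the non-abelianness of $M/N$. Therefore every finite quotient of $H$ is soluble.

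The main point needing care is to arrange that the non-abelian chief factor is a minimal normal subgroup of a quotient of $G$, not merely of $H$. That is why we pass to the $G$-core and use a $G$-chief series through $\bar H$; the hypothesis is stated only for quotients of $G$. The other subtle step is the finiteness of $H$'s index, which uses that $C$ bounds $|T^n|$: this gives finitely many target groups and, by finite generation, finitely many homomorphisms.
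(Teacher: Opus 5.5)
Your proof is correct and follows essentially the same route as the paper: you take $H$ to be a finite intersection of bounded-index normal subgroups that sit inside the kernel of the conjugation action on every possible non-abelian chief factor, you reduce to $G$-normal finite-index subgroups via the $G$-core, and you use that $H$ centralizes each non-abelian chief factor so meets it in an abelian section. The differences are cosmetic. The paper intersects all subgroups of index at most a bound $C'$ on $|\operatorname{Aut}(\mathcal M)|$ and intersects an arbitrary chief series of $\bar G$ with $\bar H$. You instead intersect only the kernels of maps to the groups $\operatorname{Aut}(T^n)$, and use the normality of $H$ to run the chief series through $\bar H$.
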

Let \(H \leq G\). The \emph{core} of \(H\) in \(G\), denoted by \(\operatorname{core}_G(H)\), is defined by
\[
\operatorname{core}_G(H)=\bigcap_{g\in G} gHg^{-1},
\]
and is the unique largest normal subgroup of \(G\) contained in \(H\).

\begin{proof}
First, note that it is sufficient to find a finite-index subgroup $H \leq G$ such that, for every $N \trianglelefteq_{\mathrm{f.i.}} G$ contained in $H$, the quotient $H/N$ is soluble. Indeed, if $K \trianglelefteq_{\mathrm{f.i.}} H$, then $H/K$ is a quotient of $H/\operatorname{core}_G(K)$, and solubility is preserved under quotients.

By assumption, there exists a constant $C' < \infty$ such that for every $L \trianglelefteq_{\mathrm{f.i.}} G$, every minimal normal subgroup $\mathcal{M}$ of every finite quotient $G/L$ satisfies $\vert{}\operatorname{Aut}(\mathcal{M})\vert{} \leq C'$. Define$$H = \bigcap_{\substack{\mathcal{H} \leq G \\ [G : \mathcal{H}] \leq C'}} \mathcal{H}.$$Since $G$ is finitely generated, it has only finitely many subgroups of index at most $C'$. Hence, $H$ is a finite-index subgroup of $G$. We will show that $H$ satisfies the required property in the proposition.

Let \(N\trianglelefteq_{\mathrm{f.i.}}G\) with \(N\leq H\). We adopt the notation that, for any subgroup \(\mathcal H\leq G\) containing \(N\), \(\overline{\mathcal H}\) denotes the quotient subgroup \(\mathcal H/N\).  Consider a chief series$$\overline{G} = \overline{G}_0 \trianglerighteq \overline{G}_1 \trianglerighteq \dots \trianglerighteq \overline{G}_\ell = 1,$$i.e. each $\overline{G}_i \trianglelefteq \overline{G}$, and the factors $\overline{G}_{i-1}/\overline{G}_i$ are minimal normal subgroups of $\overline{G}/\overline{G}_i$. Note that the second condition is equivalent to saying $G_{i-1}/G_i$ are minimal normal subgroups of the finite quotient $G/G_i$. We show that the induced series
\begin{equation*}
\overline H=\overline G_0\cap\overline H\trianglerighteq
\overline G_1\cap\overline H\trianglerighteq\cdots\trianglerighteq
\overline G_\ell\cap\overline H=1
\end{equation*}
has abelian factors, which implies that \(\overline H\) is soluble. Indeed, $(\overline G_{i-1}\cap\overline H)/(\overline G_i\cap\overline H)$ is isomorphic to a subgroup of $\overline G_{i-1}/\overline G_i$. If $\overline G_{i-1}/\overline G_i$ is abelian, then we are done. Otherwise, $\overline G_{i-1}/\overline G_i$ is non-abelian. By Lemma~\ref{lem:centralization_of_factors}, it is enough to show that $H$ centralises this $G_{i-1}/ G_i$; equivalently, we want to show that $H$ lies in the kernel of the conjugation action of $G$ on $G_{i-1}/ G_i$. This kernel $\ker(G\curvearrowright G_{i-1}/ G_i)$ has index $\vert{}\operatorname{Aut}(G_{i-1}/ G_i)\vert{} \leq C'$ in $G$, and hence the definition of $H$ implies that $H$ is contained in $\ker(G\curvearrowright G_{i-1}/ G_i)$. This concludes the proof.
\end{proof}

We now state an immediate consequence of \citep[Theorem 4.1(3)]{nil}. The key difference is that we require the chosen subgroups to be normal; the trade-off is that the resulting quotient is abelian with a bounded number of generators, rather than cyclic (see also \citep{luca}). We use standard asymptotic notation: $X \ll Y$ (or $Y \gg X$) means $X \leq CY$ for some absolute constant $C > 0$. If the implicit constant depends on some other variable $\alpha$, we indicate this with a subscript; for instance, $X \ll_{\alpha} Y$ means $X \leq C_\alpha Y$ for some constant $C_\alpha > 0$ depending on $\alpha$.

\begin{lemma} \label{k_h_normal}
Let \(F\) be a finite group such that \(\operatorname{diam}(F)\geq |F|^{\alpha}\).
Then there exist normal subgroups \(K, H\trianglelefteq F\) such that
\begin{itemize}
    \item \(H/K\) is abelian;
    \item \(|F:H|\ll_{\alpha}1\);
    \item \(|H:K|\gg_{\alpha}|F|^{\alpha}\);
    \item \(H/K\) is generated by \(d\ll_{\alpha}1\) elements.
\end{itemize}
\end{lemma}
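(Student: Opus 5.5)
We will deduce the lemma from \cref{matt_lemma} (i.e.\ \citep[Theorem 4.1]{nil}), taking its nilpotent output and extracting a large abelian normal section from it. We may assume $|F|$ is large in terms of $\alpha$; otherwise we take $H=F$ and $K=F$, and all the bounds hold trivially after adjusting constants. Take $S$ to be the symmetric generating set implicit in $\operatorname{diam}(F)$, with the identity added, and assume $|S|$ is bounded (as in the applications, where $S$ is the image of a fixed generating set of $G$). Write $\gamma=\operatorname{diam}(F)$. Apply \cref{matt_lemma} with $\epsilon=\alpha/2$ and a small $\delta$, say $\delta=\alpha/4$. This produces a normal subgroup $N\trianglelefteq F$ with $N\subseteq S^{\lfloor\gamma^{\delta}\rfloor}$, together with a nilpotent subgroup $P/N\le F/N$ of index $\ll_\alpha 1$.

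\textbf{Step 1: make the nilpotent part normal.} Replace $P$ by its normal core in $F$. This is still nilpotent modulo $N$, and its index is at most $(C_{\epsilon,\delta})!\ll_\alpha 1$. Call the result $P\trianglelefteq F$.

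\textbf{Step 2: pass to an abelian normal section.} Since $|N|\le|S|^{\gamma^\delta}$ is tiny compared with $|F|$, the group $P/N$ has order roughly $|F|$, and the diameter of $F/N$ is still at least about $\gamma-\gamma^{\delta}\gg|F|^{\alpha}$. The key tool is the standard fact that a nilpotent group generated by $k$ elements with large diameter has a large abelianisation. Concretely, let $P'$ be the derived subgroup of $P$ taken modulo $N$. Then $P/P'N$ is generated by $d\ll_\alpha 1$ elements, because $P$ has bounded index in $F$ (Schreier) and $S$ is bounded. If $P/P'N$ had size $\ll|F|^{\alpha}$, then word-length considerations in nilpotent groups would bound the diameter of $P/N$ by $\ll_\alpha|P/P'N|$: each lower central factor is a homomorphic image of a tensor power of the abelianisation, reached with controlled word length. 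This would contradict $\gamma\ge|F|^\alpha$. So we set $H=P$ and $K=P'N$. Both are normal in $F$, since they are characteristic-type constructions inside normal subgroups. The quotient $H/K$ is abelian and $d$-generated, with $|H:K|\gg_\alpha\gamma\gg|F|^{\alpha}$ up to the bounded losses.

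\textbf{Main obstacle.} The main difficulty is the diameter-to-abelianisation estimate in Step 2, namely showing that the diameter of the nilpotent group $P/N$ is controlled polynomially by $|H:K|$ with bounded nilpotency class. The class of $P/N$ need not be bounded a priori. We would handle this using the sharper form of \citep[Theorem~4.1(3)]{nil}, which already supplies a cyclic section of size $\gg\gamma$ inside a bounded-index subgroup. We would then take the normal core of that section's ambient subgroup and intersect its conjugates. This bounded intersection is precisely what costs passing from "cyclic" to "abelian with boundedly many generators", and it preserves the size of the section up to a constant depending only on $\alpha$.
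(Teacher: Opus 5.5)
Your Steps 1--2 contain a genuine gap and should be discarded. First, the bound $|N|\le|S|^{\gamma^{\delta}}$ is not small compared with $|F|$. Since $\gamma^\delta\ge|F|^{\alpha\delta}$, it is super-polynomial in $|F|$ once $|S|\ge 2$, so nothing follows about $|P/N|$.

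More seriously, the ``standard fact'' that a boundedly generated nilpotent group of large diameter has large abelianisation fails when the class is unbounded, and \cref{matt_lemma} gives no bound on the class. Take $F=D_{2^k}$ generated by two reflections: then $\gamma=|F|/2$ but $|F^{\mathrm{ab}}|=4$. Since $F$ is nilpotent, $N=1$, $P=F$ is a legitimate output of \cref{matt_lemma}. Your choice $H=P$, $K=P'N$ then gives $|H:K|=4$, far below $|F|^{\alpha}$. You flag this yourself in your last paragraph, but it means the argument you actually present does not prove the lemma.

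The fallback in your last paragraph is correct, and it is exactly the paper's proof. Take $\widetilde K\trianglelefteq\widetilde H\le F$ from \citep[Theorem~4.1(3)]{nil} with $\widetilde H/\widetilde K$ cyclic, $r:=|F:\widetilde H|\ll_\alpha 1$ and $|\widetilde H:\widetilde K|\gg_\alpha|F|^\alpha$. Put $H=\operatorname{core}_F(\widetilde H)$ and $K=\operatorname{core}_F(\widetilde K)$. To turn your sketch into a proof you need only these checks.

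First, $|F:H|\le r!$.

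Second, since $\widetilde H\le N_F(\widetilde K)$, $K$ is the intersection of at most $r$ conjugates $g_i\widetilde Kg_i^{-1}$.

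Third, $H\le g_i\widetilde Hg_i^{-1}$ by definition of the core. So $H/(H\cap g_i\widetilde Kg_i^{-1})$ embeds in the cyclic group $g_i\widetilde Hg_i^{-1}/g_i\widetilde Kg_i^{-1}$. Hence $H/K$ embeds in a product of at most $r$ cyclic groups, so it is abelian and generated by $d\le r$ elements.

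Fourth, $|H:K|\ge|H:H\cap\widetilde K|=|H\widetilde K:\widetilde K|\ge|\widetilde H:\widetilde K|/|\widetilde H:H|\gg_\alpha|F|^\alpha$.

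This route needs neither Schreier's lemma nor any assumption that $|S|$ is bounded: the bound on $d$ comes from counting conjugates of $\widetilde K$. So drop Steps 1--2 entirely and make the last paragraph, with these verifications, the whole proof.
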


\begin{proof}By \citep[Theorem~4.1(3)]{nil}, there exist subgroups $\widetilde K \trianglelefteq \widetilde H \leq F$ such that$$\vert{}F:\widetilde H\vert{} \ll_{\alpha} 1, \qquad \vert{}\widetilde H:\widetilde K\vert{} \gg_{\alpha} \vert{}F\vert{}^{\alpha},$$and $\widetilde H/\widetilde K$ is cyclic. We take$$H = \operatorname{core}_F(\widetilde H) \quad \text{and} \quad K = \operatorname{core}_F(\widetilde K).$$The  subgroups $H$ and $K$ satisfy the required conditions.\end{proof}

We will also require the following elementary observations to prove a later theorem.

\begin{lemma}\label{lem:intersection-abelian}
Let \(G\) be a group, and let \(H,K,M \leq G\) with \(K \trianglelefteq H\). If \(H/K\) is abelian and \(K \cap M = \{1\}\), then \(H \cap M\) is abelian.
\end{lemma}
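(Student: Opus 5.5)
The plan is to show directly that the derived subgroup of $H\cap M$ is trivial. Take arbitrary $x,y\in H\cap M$ and consider their commutator $[x,y]$. We will show that $[x,y]$ lies in both $K$ and $M$, and then use the hypothesis $K\cap M=\{1\}$.

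First, $[x,y]\in K$. Let $\pi\colon H\to H/K$ be the quotient map; it is well defined because $K\trianglelefteq H$. Then $\pi([x,y])=[\pi(x),\pi(y)]$, and this is trivial since $H/K$ is abelian. Hence $[x,y]\in K$. Equivalently, $[H,H]\le K$, and so $[H\cap M,H\cap M]\le [H,H]\le K$.

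Second, $[x,y]\in M$. Since $M$ is a subgroup and $x,y\in M$, the product $x^{-1}y^{-1}xy$ lies in $M$. So $[x,y]\in K\cap M=\{1\}$, which means $x$ and $y$ commute. As $x,y$ were arbitrary, $H\cap M$ is abelian.

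There is no genuine obstacle here; the only point needing care is that normality of $K$ is used only inside $H$. We need $K\trianglelefteq H$ for $H/K$ to make sense, and we need no normality of $M$ or of $K$ in $G$. This is the form in which the lemma will presumably be applied later, with $M$ a minimal normal subgroup of a finite quotient and $H,K$ as in \cref{k_h_normal}.
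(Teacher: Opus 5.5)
Your argument is correct and is essentially the paper's proof. The paper writes it at the level of subgroups as $[H\cap M,H\cap M]\le [H,H]\cap M\le K\cap M=\{1\}$, which is exactly your elementwise observation that each commutator $[x,y]$ with $x,y\in H\cap M$ lies in both $K$ and $M$.
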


\begin{proof}
Since \(H/K\) is abelian, we have \([H,H] \leq K\). Hence
\[
[H \cap M, H \cap M]
\leq [H,H] \cap M
\leq K \cap M
= \{1\},
\]
so \(H \cap M\) is abelian.
\end{proof}

\begin{lem}(\citep[Lemma 8.4]{eberhard2026diameterboundsarbitraryfinite})
\label{lem:monolithic_quotient}
Let $F$ be a finite group with a non-abelian minimal normal subgroup $M$. Then $F$ has a quotient whose unique minimal normal subgroup is isomorphic to $M$.
\end{lem}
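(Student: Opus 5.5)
We prove the statement by passing to a quotient in which the centraliser of $M$ is killed. Let $C := C_F(M)$, which is normal in $F$ because $M$ is normal. Set $\overline{F} := F/C$ and $\overline{M} := MC/C$. We will show that $\overline{F}$ is the required quotient, with $\overline{M}$ its unique minimal normal subgroup.

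First, $\overline{M} \cong M$. By the second isomorphism theorem, $MC/C \cong M/(M\cap C)$. Here $M \cap C = Z(M)$ is normal in $F$ and properly contained in $M$, since $M$ is non-abelian. Minimality of $M$ then forces $Z(M)=1$, so $\overline{M}\cong M$.

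Second, $\overline{M}$ is a minimal normal subgroup of $\overline{F}$. A normal subgroup of $\overline{F}$ inside $\overline{M}$ has the form $N/C$ with $C \le N \le MC$ and $N \trianglelefteq F$. Then $N\cap M$ is normal in $F$, so it equals $1$ or $M$.
\begin{itemize}
\item If $N \cap M = M$, then $N \supseteq MC$, so $N/C=\overline{M}$.
\item If $N\cap M=1$, then $[N,M]\le N\cap M=1$, so $N\le C$ and $N/C$ is trivial.
\end{itemize}

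Third, we apply \cref{lem_uniquenss_min}: it suffices to show $C_{\overline{F}}(\overline{M})=1$. This is the step needing care. Suppose $xC$ centralises $\overline{M}$. Then $[x,m]\in C$ for every $m\in M$; since also $[x,m]\in M$ by normality of $M$, we get $[x,m]\in M\cap C=Z(M)=1$. Hence $x\in C$, and $C_{\overline{F}}(\overline{M})=1$.

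Since $\overline{M}$ is non-abelian, \cref{lem_uniquenss_min} shows it is the unique minimal normal subgroup of $\overline{F}$, and it is isomorphic to $M$. The only genuinely delicate point is the repeated use of $Z(M)=1$, which follows from $Z(M)$ being characteristic in $M$ and hence normal in $F$.
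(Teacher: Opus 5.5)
Your proof is correct and follows the paper's route. You pass to $F/C_F(M)$ and use $M\cap C_F(M)=Z(M)=1$ to get $\overline{M}\cong M$; you justify $Z(M)=1$ via minimality, where the paper only asserts it. The one difference is the last step: the paper shows directly that every nontrivial normal subgroup of $F/C_F(M)$ contains $\overline{M}$ (its preimage $L\not\le C_F(M)$ has $[L,M]\neq 1$, so $M\le L$ by \cref{min_group}), which gives minimality and uniqueness at once, whereas you check minimality separately and then invoke \cref{lem_uniquenss_min} through $C_{\overline{F}}(\overline{M})=1$.
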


\begin{proof}Let $F^* = F/C_F(M)$ and $M^* = MC_F(M)/C_F(M)$. Since $M$ is non-abelian, its center $Z(M)$ is trivial; therefore, $M \cap C_F(M) =Z(M) = 1$. This implies$$M^* = MC_F(M)/C_F(M) \cong M/(M \cap C_F(M)) \cong M.$$Next, we show that $M^*$ is the unique minimal normal subgroup of $F^*$. Consider a non-trivial normal subgroup $L^* \trianglelefteq F^*$ with preimage $L \trianglelefteq F$. As $L^* \neq 1$, we have $L \not\le C_F(M)$, which implies $[L, M] \neq 1$. By \cref{min_group}, $M \leq L$. It follows that $M^* \leq L^*$. Thus, $M^*$ is the unique minimal normal subgroup of $F^*$, and the result holds.\end{proof}

\begin{lemma}\label{lem:order-out-wreath}Let $T$ be a non-abelian simple group and $n \in \mathbb{N}$. Then$$\max \{\, \vert{}g\vert{} \mid g \in \operatorname{Out}(T^n) \,\} \ll \log(\vert{}T\vert{})e^{ n^{0.75}}.$$\end{lemma}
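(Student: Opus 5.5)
We will use the standard description $\operatorname{Aut}(T^n) \cong \operatorname{Aut}(T) \wr S_n$ for a non-abelian simple group $T$. It then follows that $\operatorname{Out}(T^n) \cong \operatorname{Out}(T) \wr S_n = \operatorname{Out}(T)^n \rtimes S_n$. This holds because $\operatorname{Inn}(T^n) = \operatorname{Inn}(T)^n$ is normal in $\operatorname{Aut}(T)\wr S_n$ and lies in the base group, so the quotient is $(\operatorname{Aut}(T)/\operatorname{Inn}(T))^n \rtimes S_n$.

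The identification of $\operatorname{Aut}(T^n)$ rests on the fact that the minimal normal subgroups of $T^n$ are exactly the $n$ coordinate factors. To see this, let $N$ be a non-trivial normal subgroup. Its projection onto some factor $T_i$ is non-trivial, so $[N,T_i]$ is a non-trivial normal subgroup of $T_i$, hence equals $T_i$. This gives $T_i \le N$. Any automorphism therefore permutes the factors, and the result follows.

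With this description, we bound the order of an element $g = (a_1,\dots,a_n)\sigma \in \operatorname{Out}(T)^n \rtimes S_n$. Let $k=\ord(\sigma)$. Then $g^k = (b_1,\dots,b_n)$ lies in the base group. On each cycle of $\sigma$ of length $\ell$, the coordinate $b_j$ is a product of $k/\ell$ copies of the cycle product $c$, which is an element of $\operatorname{Out}(T)$. Hence $\ord(b_j)$ divides $\ord(c)$, and $\ord(g^k) \le \exp(\operatorname{Out}(T)) \le |\operatorname{Out}(T)|$. Using the weaker bound $\max_{x\in\operatorname{Out}(T)}\ord(x)$ instead would need care, since $g^k$ has several coordinates and their orders multiply via an lcm. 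A cleaner route is to note that $\ord(g) = \operatorname{lcm}$ over cycles of $\ell\cdot \ord(c_{\text{cycle}})$. This gives
\[
\ord(g) \le \ord(\sigma)\cdot \operatorname{lcm}\{\ord(x): x\in \operatorname{Out}(T)\} \le \ord(\sigma)\cdot \exp(\operatorname{Out}(T)).
\]

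Two inputs are then needed. The first is Landau's theorem, $\max_{\sigma\in S_n}\ord(\sigma) = e^{(1+o(1))\sqrt{n\log n}}$, which is $\ll e^{n^{0.75}}$. The second is the bound $\exp(\operatorname{Out}(T)) \ll \log|T|$. For the latter we invoke the classification of finite simple groups. For alternating and sporadic groups, $|\operatorname{Out}(T)|\le 4$. For a group of Lie type over $\F_q$ with $q=p^f$, $\operatorname{Out}(T)$ is a product of diagonal, field and graph automorphisms. The field automorphisms form a cyclic group of order $f \le \log q \le \log|T|$. The graph automorphisms have order at most $6$. The diagonal automorphisms form a group of exponent dividing $\gcd(r+1,q-1)$ (type $A_r$), or of bounded order otherwise. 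Since $r+1\ll \log|T|$, the exponent of $\operatorname{Out}(T)$ is $\ll f\cdot(r+1) \cdot 6$. This bound is a product rather than a sum, so it requires care: $\exp(\operatorname{Out}(T))\le 6\, f \gcd(r+1,q-1)$. Here $|T|\ge q^{r^2}$ gives $\log|T| \gg r^2 f$, and $f\cdot(r+1) \ll r^2 f \ll \log|T|$, so the bound holds.

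The main obstacle is this last step: making the $\operatorname{Out}(T)$ bound uniform via the structure of outer automorphism groups of groups of Lie type, including the twisted cases where the field automorphism group has order $2f$ or $3f$. Even there, the order is still linear in $\log q$, so the bound survives. Combining the two inputs gives $\max\ord(g)\ll \log(|T|)\,e^{n^{0.75}}$.
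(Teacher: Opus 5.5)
Your proposal is correct and follows essentially the same route as the paper: identify $\operatorname{Out}(T^n)\cong\operatorname{Out}(T)\wr S_n$, bound element orders by $\exp(\operatorname{Out}(T))\cdot\max_{\sigma\in S_n}\ord(\sigma)$, then combine Landau's bound with $\exp(\operatorname{Out}(T))\le|\operatorname{Out}(T)|\ll\log|T|$; the paper simply cites this last bound rather than deriving it from the classification as you do. One small slip in your sketch: the unitary groups ${}^2A_r(q)$ also have an unbounded diagonal part, of order $\gcd(r+1,q+1)\le r+1$, but your estimate $f(r+1)\ll r^2f\ll\log|T|$ covers that case unchanged.
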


\begin{proof}For finite groups $H$ and $K$, the following bound holds for the maximal order of an element in the wreath product:$$\max\{\vert{}g\vert{} : g \in H \wr K\} \leq \exp(H) \cdot \max\{\vert{}k\vert{} : k \in K\} \leq \vert{}H\vert{} \cdot \max\{\vert{}k\vert{} : k \in K\},$$ where $\exp(H)$ denotes the exponent of $H$. Now, let $T$ be a non-abelian simple group. It is well known that$$\operatorname{Out}(T^n) \cong \operatorname{Out}(T) \wr S_n.$$Hence,$$\max\{\vert{}g\vert{} : g \in \operatorname{Out}(T^n)\} \leq \vert{}\operatorname{Out}(T)\vert{} \cdot \max\{\vert{}\sigma\vert{} : \sigma \in S_n\}.$$By the standard bound (see, for example, \citep{out_small}) for outer automorphism groups of finite non-abelian simple groups,$$\vert{}\operatorname{Out}(T)\vert{} \ll \log \vert{}T\vert{}.$$ Moreover, the maximum order of an element of $S_n$ is given by Landau's function $g(n) = \max\{\vert{}\sigma\vert{} : \sigma \in S_n\}$, which satisfies$$g(n) \leq e^{C\sqrt{n\log n}}$$for some absolute constant $C$ (see \citep{Landau1953}).  The proof follows by noting that $\sqrt{n \log n} \ll n^{0.75}$, which implies $g(n) \ll e^{ n^{0.75}}$. Thus, combining these bounds yields$$\max\{\vert{}g\vert{} : g \in \operatorname{Out}(T^n)\} \ll \log(\vert{}T\vert{})e^{ n^{0.75}}.$$
\end{proof}

\begin{prop}
\label{u_q_all_finite_quotient_bdd}
Suppose $G$ has u.q.($\alpha$). Then there exists a constant $C_\alpha < \infty$ such that every non-abelian minimal normal subgroup of every non-trivial finite quotient of $G$ has order at most $C_\alpha$.
\end{prop}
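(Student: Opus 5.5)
Suppose $G$ has u.q.($\alpha$) and let $F = G/L$ be a finite quotient with a non-abelian minimal normal subgroup $M \cong T^n$ (by \cref{lem:char_simple_direct_power}). We need to bound $|M|$ in terms of $\alpha$. By \cref{lem:monolithic_quotient}, we may pass to a further quotient $F^*$ of $G$ in which $M^*\cong M$ is the unique minimal normal subgroup. Since $F^*$ is still a finite quotient of $G$, u.q.($\alpha$) gives $\operatorname{diam}(F^*) \ge \varepsilon |F^*|^\alpha$. For $|F^*|$ large (with respect to $\varepsilon$), this yields $\operatorname{diam}(F^*) \ge |F^*|^{\alpha/2}$, so we may assume $F$ itself is monolithic with $C_F(M)=1$ (\cref{lem_uniquenss_min}). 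Then $F$ embeds in $\operatorname{Aut}(M)$, and $M \le F \le \operatorname{Aut}(T^n) \cong \operatorname{Aut}(T)\wr S_n$.

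Now apply \cref{k_h_normal} to $F$ with exponent $\alpha/2$. This gives normal subgroups $K \trianglelefteq H \trianglelefteq F$ with $H/K$ abelian, $|F:H| \ll_\alpha 1$, $H/K$ generated by $d \ll_\alpha 1$ elements, and $|H:K| \gg_\alpha |F|^{\alpha/2} \ge |M|^{\alpha/2}$. Since $K$ is normal and $M$ is the unique minimal normal subgroup, either $K=1$ or $M \le K$.

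In the first case, $H$ is abelian. Then $H\cap M$ is an abelian normal subgroup of $F$, so it is trivial, and hence $M$ embeds in $F/H$. This gives $|M| \le |F:H| \ll_\alpha 1$.

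In the second case, $H/K$ is a quotient of $F/M$, which is a subgroup of $\operatorname{Out}(T^n)=\operatorname{Out}(T)\wr S_n$. Being abelian and $d$-generated, $H/K$ has order at most $(\text{max element order})^d$. By \cref{lem:order-out-wreath}, this is $\ll (\log|T|\, e^{n^{0.75}})^{d}$. Combining the two bounds,
\[
|T|^{n\alpha/2} = |M|^{\alpha/2} \ll_\alpha |H:K| \ll (\log|T|)^{d} e^{d n^{0.75}},
\]
with $d \ll_\alpha 1$. Taking logarithms gives $\tfrac{\alpha}{2} n\log|T| \le d\log\log|T| + d n^{0.75} + O_\alpha(1)$. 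Since $\log|T| \ge \log 60$, the left-hand side dominates unless both $n$ and $|T|$ are bounded in terms of $\alpha$. Hence $|M| = |T|^n \ll_\alpha 1$.

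One point needs care. The bound $\operatorname{diam}\ge|F|^{\alpha/2}$ holds only once $|F|$ exceeds some constant depending on $\varepsilon,\alpha$. If $|F|$ is below this constant, then $|M| \le |F|$ is bounded anyway. Similarly, \cref{k_h_normal} is applied to $F^*$ rather than the original quotient, and that is fine since only $|M^*|=|M|$ matters. The main obstacle is the second case: one must verify that an abelian $d$-generated section of $\operatorname{Out}(T)\wr S_n$ really has order at most the maximal element order raised to the power $d$. This holds because an abelian group with $d$ generators of order at most $e$ has order at most $e^d$, and a section of $F/M$ has element orders bounded by those of $F/M$. One must also check that the sub-exponential growth $e^{n^{0.75}}$ is beaten by $|T|^{n\alpha/2}$ uniformly in $T$, which is what the logarithmic comparison above achieves.
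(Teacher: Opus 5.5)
Your proof is correct and follows the same route as the paper: apply \cref{k_h_normal}, split according to whether $K\cap M=1$ or $M\le K$, bound $|M|$ by $|F:H|$ in the first case, and in the second case place the large abelian $d$-generated section $H/K$ inside $\operatorname{Out}(T^n)$ and compare with \cref{lem:order-out-wreath}. The differences are only organisational: you absorb $\varepsilon$ by working with exponent $\alpha/2$, and you pass to the monolithic quotient $F/C_F(M)$ before invoking \cref{k_h_normal}, which is the order actually needed for $H/K$ to be a section of $\operatorname{Out}(M)$ (the paper makes this reduction only midway through its second case).
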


\begin{proof}
Let $N \trianglelefteq_{\mathrm{f.i.}} G$ with $N \leq H$. We again adopt the notation that, for any subgroup $\mathcal{H} \leq G$ containing $N$, $\overline{\mathcal{H}}$ denotes the quotient subgroup $\mathcal{H}/N$ of $\overline{G} = G/N$.

Let $\overline{M}$ be a minimal normal subgroup of $\overline{G}$. By \cref{k_h_normal}, there exist normal subgroups $K, H \trianglelefteq G$ such that:
\begin{itemize}
    \item $H/K$ is abelian;
    \item $|G:H| \ll_{\alpha} 1$;
    \item $|H:K| \gg_{\alpha} |G|^{\alpha}$;
    \item $H/K$ is generated by $d \ll_{\alpha} 1$ elements.
\end{itemize}

We will split the remaining argument into two cases using \cref{min_group}.
Suppose first that \(\bar K \cap \bar M=\{1\}\). Since \(\bar H/\bar K\) is abelian, \Cref{lem:intersection-abelian} implies that \(\bar H\cap\bar M\) is abelian. Moreover, as \(\bar H\trianglelefteq\bar G\), the subgroup \(\bar H\cap\bar M\) is normal in \(\bar G\). Since \(\bar M\) is a minimal normal subgroup of \(\bar G\), it follows that \(\bar H\cap\bar M\) is either trivial or equal to \(\bar M\). As \(\bar M\) is non-abelian whereas \(\bar H\cap\bar M\) is abelian, we conclude that \(\bar H\cap\bar M=\{1\}\). Also, \(|\bar G:\bar H|\ll_\alpha 1\), so it follows that \(|\bar M:\bar H\cap\bar M| \leq |\bar G:\bar H|\ll_\alpha 1\). Since \(\bar H\cap\bar M=\{1\}\), we must have \(|\bar M|\ll_\alpha 1\), and we are done in this case.

Now suppose that \(\bar K \cap \bar M=\bar M\), which implies that \(\bar K\) contains \(\bar M\). We have that \(\bar H/\bar K\) is generated by \(d\)  elements and is an abelian subquotient of \(\bar G\), with \(|\bar H/\bar K|\gg_\alpha |\bar G|^\alpha>|\bar M|^\alpha\). It follows that there exists an element \(g\in \bar H/\bar K\) with order \(|g|\gg_\alpha |\bar M|^{\alpha/d}\).

We next find an upper bound for $|g|$. According to \cref{lem_uniquenss_min} and \cref{lem:monolithic_quotient}, we may assume $C_{\bar{G}}(\bar{M}) = \{1\}$. Therefore, by considering the conjugation action of $\bar{G}$ on the elements of $\bar{M}$, we may identify $\bar{G}$ as a subgroup of $\operatorname{Aut}(\bar{M})$. According to \cref{lem:char_simple_direct_power}, we may write $\bar{M} = T^n$ for some non-abelian simple group $T$. Furthermore, since $T$ is non-abelian simple, we have $\operatorname{Inn}(\bar{M})= \operatorname{Inn}(T^n) \cong T^n = \bar{M}$. Since  \(\bar K\) contains \(\bar M\), we can deduce that \(\bar H/\bar K\) is in fact a subquotient of \(\operatorname{Out}(\bar M)\). Hence, combining with the lower bound on $|g|$ we found earlier, we have 
\[
|T|^{n\alpha/d} = |\bar M|^{\alpha/d}\ll_\alpha |g| \le\max \{\, \vert{}h\vert{} \mid h \in \operatorname{Out}(T^n) \,\} .
\]
Applying \cref{lem:order-out-wreath}, we get 
\[
|T|^{n\alpha/d}\ll_\alpha \log |T|\, e^{ n^{0.75}}.
\]

We will show that both \(n\) and \(|T|\) are bounded. The left-hand side grows exponentially in \(n\), while the right-hand side is subexponential in \(n\). Therefore, \(n\) must be bounded. Once \(n\) is fixed, the left-hand side is a polynomial function of \(|T|\), whereas the right-hand side is logarithmic in \(|T|\). Hence \(|T|\) must also be bounded. Consequently, we have \(|\bar M|=|T|^n\ll_\alpha 1\). This concludes the proof.
\end{proof}

We now obtain some immediate corollaries of
\cref{bound_on_non_ablina_normal_implies_virtualy_every_finite_quotietn_soluble}
and \cref{u_q_all_finite_quotient_bdd}.

\begin{cor}\label{u_q_every_finite_quotient_is_solbule}
Let \(G\) be a finitely generated group with u.q. Then \(G\) contains a finite-index subgroup \(H\) such that every finite quotient of \(H\) is soluble.
\end{cor}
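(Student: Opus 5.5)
The corollary follows by chaining \cref{u_q_all_finite_quotient_bdd} into \cref{bound_on_non_ablina_normal_implies_virtualy_every_finite_quotietn_soluble}. Since $G$ has u.q., there is some $\alpha \in (0,1]$ such that $G$ has u.q.($\alpha$). This is the only place the diameter hypothesis enters. By \cref{lem:indep.gen.set}, the choice of finite generating set does not matter.

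First, apply \cref{u_q_all_finite_quotient_bdd} to obtain a constant $C_\alpha < \infty$ such that every non-abelian minimal normal subgroup of every non-trivial finite quotient $G/N$ has order at most $C_\alpha$. The point to check is that the u.q.($\alpha$) bound $\operatorname{diam}_S(G/N) \ge \varepsilon [G:N]^\alpha$ supplies the hypothesis of \cref{k_h_normal}, which requires $\operatorname{diam}(F) \ge |F|^{\alpha}$ with no constant $\varepsilon$. I would handle this in one of two ways. One option is to replace $\alpha$ by a slightly smaller exponent $\alpha' < \alpha$, so that $\varepsilon |F|^{\alpha} \ge |F|^{\alpha'}$ once $|F|$ is large. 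The other is to note that the implied constants in \cref{k_h_normal} may absorb $\varepsilon$. The finitely many quotients of bounded order are harmless, since their minimal normal subgroups have bounded order trivially.

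Second, this constant $C := C_\alpha$ is exactly the hypothesis of \cref{bound_on_non_ablina_normal_implies_virtualy_every_finite_quotietn_soluble}. That proposition, applied to the finitely generated group $G$, yields a finite-index subgroup $H \le G$ all of whose finite quotients are soluble. This completes the argument.

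There is no real obstacle, since all the work is in the two cited propositions. The only care needed is the bookkeeping of the constant $\varepsilon$ and the small-quotient cases in the first step.
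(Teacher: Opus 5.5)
Your proposal is correct and matches the paper: it obtains this corollary immediately by feeding the uniform bound $C_\alpha$ from \cref{u_q_all_finite_quotient_bdd} into \cref{bound_on_non_ablina_normal_implies_virtualy_every_finite_quotietn_soluble}. Your remark on absorbing the constant $\varepsilon$, by passing to a slightly smaller exponent so that the hypothesis of \cref{k_h_normal} holds for large quotients, is sensible bookkeeping that the paper leaves implicit.
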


\begin{rem}
Indeed, by analyzing the non-Frattini chief factors $M/N$ of $G$, where $|G:N| < \infty$ (for background, see \citep{eberhard2025growthresiduallysolublegroups}), Sean Eberhard (private communication) can show that if $G$ satisfies the u.c. property, then $G$ has a finite-index subgroup $H$ such that $H'$ is residually nilpotent. In particular, a finitely generated group with the u.c. property must be virtually (residually nilpotent)-by-abelian.
\end{rem}

\begin{proof}[Proof of \cref{main_thm_residually_soluble}]
Let $G$ be a finitely generated residually soluble group with $u.q.$ \cref{u_q_every_finite_quotient_is_solbule} implies that $G$ contains a finite-index subgroup $H$ such that every finite quotient of $H$ is soluble. Furthermore, $H$ is also residually finite, so every non-trivial element of $H$ survives in some finite quotient of $H$. Such a finite quotient must be soluble by the property of $H$. Hence, $H$ is residually soluble.
\end{proof}
It is clear that residual solubility is not a virtual property, since any nontrivial finite perfect group is virtually residually soluble but not residually soluble. 

To prove our u.c. conjecture for finitely generated linear groups, we require the following lemma, which is based on an answer by Sean Eberhard on Math StackExchange \citep{sean4357453}.
\begin{lem}\label{all_finite_quotient_solbule_of_linear_impileis_soblue}
Let \(G\) be a finitely generated linear group. Then \(G\) is soluble if and only if every finite quotient of \(G\) is soluble.
\end{lem}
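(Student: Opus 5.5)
The forward direction is immediate, since quotients of soluble groups are soluble. For the converse, suppose every finite quotient of $G$ is soluble, and let $G \le \GL_d(K)$ for some field $K$. The plan is to use two classical facts about finitely generated linear groups: Malcev's theorem that such groups are residually finite, and Zassenhaus' theorem that soluble linear groups of a given degree have bounded derived length. Concretely, there is a function $f(d)$ such that every soluble subgroup of $\GL_d(F)$, for any field $F$, has derived length at most $f(d)$.

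The key steps are as follows. Since $G$ is finitely generated, its matrix entries lie in a finitely generated subring $R \subseteq K$. So $G \le \GL_d(R)$, and $R$ is a finitely generated integral domain. For every maximal ideal $\mathfrak m$ of $R$, the quotient $R/\mathfrak m$ is a finite field. Reduction mod $\mathfrak m$ then gives a homomorphism $\pi_{\mathfrak m}\colon G \to \GL_d(R/\mathfrak m)$ with finite image. By hypothesis this image is soluble, and it is a soluble subgroup of $\GL_d$ over a field, so its derived length is at most $f(d)$. Hence $G^{(f(d))} \le \ker \pi_{\mathfrak m}$ for every maximal ideal $\mathfrak m$.

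Since $R$ is a Jacobson domain, the intersection of all maximal ideals of $R$ is the nilradical, which is $0$. Therefore $\bigcap_{\mathfrak m} \ker \pi_{\mathfrak m}$ consists of the elements $g \in G$ with $g - I$ having all entries in $\bigcap \mathfrak m = 0$, so this intersection is trivial. It follows that $G^{(f(d))} = 1$, and $G$ is soluble.

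The main obstacle is justifying the uniform bound on derived length over arbitrary finite fields. The standard route is as follows. Soluble subgroups of $\GL_d(F)$ have a normal subgroup of index bounded in terms of $d$ (independent of $F$) that is triangularizable over $\bar F$; this rests on Mal'cev, Kolchin and Zassenhaus. The triangularizable part has derived length at most $1+\lceil\log_2 d\rceil$. If only a weaker bound is readily citable, namely derived length bounded in terms of $d$ for \emph{finite} soluble linear groups, that suffices, because we only apply the bound to finite images. I would cite Zassenhaus' theorem directly, following the paper's reference to Eberhard's argument.
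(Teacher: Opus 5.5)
Your proof is correct and follows the paper's route: Zassenhaus' uniform bound $f(d)$ on the derived length of soluble linear groups of degree $d$, combined with Mal'cev's separation of points of $G$ by reductions to finite fields, forces $G^{(f(d))}=1$. The only difference is that you unpack Mal'cev's theorem yourself, whereas the paper cites it as a black box. You take a finitely generated coefficient ring $R$, use that each $R/\mathfrak m$ is a finite field, and use that $R$ is Jacobson, so $\bigcap \mathfrak m = 0$. One small detail: $R$ should also contain the entries of the generators' inverses, so that $G \le \GL_d(R)$.
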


\begin{proof}
The forward implication is clear, since quotients of soluble groups are soluble.
Conversely, suppose that \(G\leq \mathrm{GL}_n(K)\) and every finite quotient of
\(G\) is soluble. Let \(f(n)\) be the maximal derived length of a soluble subgroup of
\(\mathrm{GL}_n(F)\), where \(F\) ranges over all fields. By Zassenhaus'
theorem \citep{Zassenhaus1937}, \(f(n)\) is finite for every $n$. We will show that the derived length of G is at most \(f(n)\). Suppose, for a contradiction, that there exists \(g\in G^{(f(n))}\) with \(g\neq 1\).
 Mal'cev showed that there exists
a finite field \(F\) and a homomorphism
\(\pi:G\to \mathrm{GL}_n(F)\) such that \(\pi(g)\neq 1\)
(see \citep{Malcev1940}; see also \citep{Nica2013} for a modern exposition of
this result). Now, since  \(g\in G^{(f(n))}\), we have
\(\pi(g)\in \pi(G^{(f(n))}) = \pi(G)^{(f(n))}\). The group \(\pi(G)\) is a finite quotient of
\(G\), so it is soluble by hypothesis. Since
\(\pi(G)\leq \mathrm{GL}_n(F)\), its derived length is at most \(f(n)\), and
therefore \(\pi(G)^{(f(n))}=1\). This contradicts the fact that
\(\pi(g)\neq 1\). Hence \(G^{(f(n))}=1\), and so \(G\) is soluble.
\end{proof}

\begin{proof}[Proof of \cref{main_thm_linear}]
By \cref{u_q_every_finite_quotient_is_solbule}, \(G\) contains a finite-index
subgroup \(H\) whose finite quotients are all soluble. Since \(H\) is linear,
\cref{all_finite_quotient_solbule_of_linear_impileis_soblue} implies that
\(H\) is soluble. Hence \(G\) is virtually soluble. The statement now follows directly from \cref{main_thm} and \cref{uniform_D_alpha_passes_finte_index_subgroup}.
\end{proof}

\subsection{Some remarks on the u.c.\ conjectures for residually soluble groups}
According to the result established in \cref{main_thm_residually_soluble}, the statement in \cref{u_c_conj} may be reduced to the residually soluble case. We proceed to analyse this specific case below.
\begin{lem} \label{main_thm_implies}
Let $G$ be an infinite, finitely generated, residually finite, and residually soluble group with u.q. Then there exists a finite-index subgroup $\Gamma$ of $G$ and some $L \in \mathbb{N}$ such that:
\begin{enumerate} [(i)]
    \item $\Gamma / \Gamma^{(L)}$ is a torsion-free nilpotent group with Hirsch length $h \geq 1$;
    \item $\Gamma^{(L)}$ does not have a finite-index subgroup that surjects onto $\mathbb{Z}$;
    \item  Setting $K = \Gamma^{(L)}$, we have $[K : K^{(i)}] < \infty$ for all $i \in \mathbb{N}$.
\end{enumerate}
\end{lem}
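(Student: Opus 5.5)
The plan is to build $\Gamma$ and $L$ by a maximality argument on Hirsch lengths of soluble quotients, and then read off (ii) and (iii) from that maximality.

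\textbf{Step 1: soluble quotients are virtually nilpotent.} For every finite-index $\Gamma\le G$ and every $i$, the quotient $\Gamma/\Gamma^{(i)}$ is a finitely generated soluble group. It lies in $\mathcal{C}(G)$, so it inherits the flatness hypothesis. I would then invoke \cref{main_thm} to conclude that $\Gamma/\Gamma^{(i)}$ is virtually nilpotent, hence polycyclic with a well-defined Hirsch length $h(\Gamma,i)$.

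This is the step I expect to be the main obstacle. \cref{main_thm} and \cref{uniform_D_alpha_passes_finte_index_subgroup} are stated for u.c., whereas the hypothesis here is only u.q. Before anything else I would check whether the arguments of Sections 3--5 only ever use normal subgroups:
\begin{itemize}
\item in \cref{char_K_Z_m}, the subgroups $H_p$ are normal, but the final subgroup $H'=A_p\rtimes\mathbb{Z}^m$ is not;
\item in \cref{inf_g_abelian_u_c}, the subgroups $K_i$ are not normal either.
\end{itemize}
If this cannot be repaired, I would run the argument under u.c.\ and record that restriction.

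\textbf{Step 2: a uniform bound on Hirsch lengths.} I want $h(\Gamma,i)\le 1/\alpha$, independently of $\Gamma$ and $i$. Pass to a torsion-free nilpotent finite-index subgroup $N$ of $\Gamma/\Gamma^{(i)}$, and take the congruence-type normal subgroups $N_n$ generated by $n$-th powers of a Mal'cev basis. Then
\[
[N:N_n]\gg n^{h}, \qquad \operatorname{diam}(N/N_n)\ll n .
\]
Pulling these back to normal subgroups of finite index in $G$ (via a core) and applying u.q.$(\alpha)$ forces $h\le 1/\alpha$. Consequently
\[
h^*:=\max_{\Gamma,\,i} h(\Gamma,i)
\]
exists. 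Choose $\Gamma_0$ and $L_0$ attaining $h^*$. Replace $\Gamma_0$ by a finite-index subgroup $\Gamma$ for which $\Gamma/\Gamma^{(L)}$ is torsion-free nilpotent for a suitable $L\ge L_0$; this is possible since finitely generated nilpotent groups are virtually torsion-free. By maximality the Hirsch length is still $h^*$. Here $h^*\ge 1$, because $G$ is infinite and residually soluble, so some $G/G^{(i)}$ is infinite. This gives (i).

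\textbf{Step 3: property (iii).} Set $K=\Gamma^{(L)}$. For each $i$, the group $\Gamma/\Gamma^{(L+i)}$ is polycyclic of Hirsch length at most $h^*=h(\Gamma/K)$. Hence $K/K^{(i)}=\Gamma^{(L)}/\Gamma^{(L+i)}$ has Hirsch length $0$, so it is finite. This gives (iii).

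\textbf{Step 4: property (ii).} Suppose $K_0\le K$ has finite index and surjects onto $\mathbb{Z}$.
\begin{enumerate}[(a)]
\item Choose a finite-index $\Gamma_1\le\Gamma$ with $\Gamma_1\cap K\le K_0$; for instance, intersect the conjugates of $K_0$ and combine with a coset argument. Then $K_1:=\Gamma_1\cap K$ still surjects onto $\mathbb{Z}$, since restricting the map to a finite-index subgroup has nontrivial image in $\mathbb{Z}$.
\item Since $\Gamma_1/K_1$ has finite index in $\Gamma/K$, it has Hirsch length $h^*$. Since $h(\Gamma_1,L)\le h^*$ and $\Gamma_1^{(L)}\le K_1$, the quotient $K_1/\Gamma_1^{(L)}$ is finite.
\item So $\Gamma_1^{(L)}$ has finite index in $K_1$ and therefore also surjects onto $\mathbb{Z}$. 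Hence $\Gamma_1^{(L)}/\Gamma_1^{(L+1)}$ is infinite.
\item This gives $h(\Gamma_1,L+1)>h^*$, contradicting the maximality of $h^*$. This proves (ii).
\end{enumerate}
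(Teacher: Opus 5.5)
Your route is the paper's. \cref{main_thm} makes every $\Gamma/\Gamma^{(i)}$ virtually nilpotent, \citep[Proposition~1.9]{Guo_Tointon} bounds the Hirsch lengths (this is your Step~2 estimate), and your maximality argument unpacks what the paper calls ``follows immediately''. Your Step~1 worry is justified, and it applies equally to the paper, which also invokes the u.c.\ result \cref{main_thm} under a u.q.\ hypothesis. Step~3 and Steps~4(b)--(d) are correct.

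The step producing torsion-freeness in (i) fails, however, and the paper's one-line proof does not address it either. Passing from $\Gamma_0$ to a finite-index $\Gamma$ replaces $\Gamma_0^{(L_0)}$ by the smaller $\Gamma^{(L)}$. Maximality only shows that $(\Gamma\cap\Gamma_0^{(L_0)})/\Gamma^{(L)}$ is finite, so $\Gamma/\Gamma^{(L)}$ is merely finite-by-(torsion-free nilpotent), and repeating the step reproduces the defect. No argument using only the properties you invoke can work. Let $\mathfrak{G}$ be the Grigorchuk group: infinite, finitely generated, a $2$-group, residually a finite $2$-group. For $G=\mathfrak{G}\times\mathbb{Z}$, every $\Gamma/\Gamma^{(i)}$ with $i\geq 1$ is finite-by-$\mathbb{Z}$, so Hirsch lengths are bounded by $1$ and the maximum is attained. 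Yet for every finite-index $\Gamma$ and every $L\geq 1$, the torsion subgroup $(\Gamma\cap\mathfrak{G})/\Gamma^{(L)}$ of $\Gamma/\Gamma^{(L)}$ is non-trivial. Otherwise some finite-index subgroup of $\mathfrak{G}$ would be perfect or would have cyclic abelianisation. Both are impossible for a non-abelian group that is residually a finite $2$-group; use Burnside's basis theorem for the second. This $G$ fails u.q., since its quotient $\mathfrak{G}$ does, so it is not a counterexample to the lemma. It does show that torsion-freeness needs a genuinely new use of the flatness hypothesis.

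Two further steps are unjustified. First, the inference ``$G$ is infinite and residually soluble, so some $G/G^{(i)}$ is infinite'' is false. $\mathfrak{G}$ is a counterexample: each $\mathfrak{G}/\mathfrak{G}^{(i)}$ is a finitely generated soluble torsion group, hence finite. To get $h^*\geq 1$ you must use flatness, for example via \citep[Theorem~1.2]{Guo_Tointon}, which yields a finite-index subgroup surjecting onto $\mathbb{Z}$.

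Second, in Step~4(a) nothing guarantees that $\Gamma_1$ exists. $K$ need not be finitely generated; the remark after the lemma treats this as open. So $K_0$ may have infinitely many $\Gamma$-conjugates, and the profinite topology of $\Gamma$ may induce a strictly coarser topology on $K$. For example, take $K=\bigoplus_{\mathbb{Z}}\mathbb{Z}$, the base of $\mathbb{Z}\wr\mathbb{Z}$, and $K_0=\{x : x_0\in 2\mathbb{Z}\}$. Each $\Gamma_1\cap K$ is invariant under a non-trivial shift, so it is not contained in $K_0$. As written, Step~4 only proves (ii) for finite-index subgroups of $K$ of the form $\Gamma_1\cap K$.
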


\begin{proof}
Note that, for every $i$, \( G/G^{(i)} \) is a finitely generated soluble group, hence by \cref{main_thm} is virtually nilpotent. Consider the Hirsch length \( h = h(G/G^{(i)}) \); this is bounded by \citep[Proposition~1.9]{Guo_Tointon}, from which the statement follows immediately.
\end{proof}
\begin{rem}
Referring to the setup in \cref{main_thm_implies}, one way of proving \cref{u_c_conj} is to show that the u.c.\ property actually forces \( G^{(L)} = \{1\} \). If \( G^{(L)} \neq \{1\} \), one would hope to find a sequence of finite quotients that violates \eqref{diam_lower_bound}. We outline some thoughts below: 

\begin{enumerate}[(i)]

\item If one can show that \(K\) is finitely generated, then the fact that
\(\Gamma^{(L)}\) does not virtually surject onto \(\mathbb{Z}\) implies that no
single filtration of \(\Gamma^{(L)}\) with trivial intersection satisfies
\eqref{diam_lower_bound}. A proof of this can be found in
\citep[Theorem~1.2]{Guo_Tointon}.
    
   \item Suppose \(G\) is residually soluble but not soluble. Then one could construct a sequence of finite soluble groups with strictly increasing derived length. It is known that a finite nilpotent group \( G \) can have a subgroup with “small” index and “small” nilpotency step (e.g. consider the dihedral group \( D_{2^k} \)). In contrast, the analogous statement does not hold for finite soluble groups. A well-known theorem by Dixon \citep[Exercise~5.8.7]{DixonMortimer1996} states that every soluble subgroup of \( S_n \) has derived length at most \( b \log n \), where \( b = \tfrac{5}{2\log 3} \). From this, it is immediate that if a finite soluble group \( G \) has derived length \( d \), and \( H \leq G \) has derived length \( d - r \), then \( |G: H| \) is bounded below by an exponential function in \( r \).
\end{enumerate}
\end{rem}

\section{The u.q. property in two classical non-polycyclic soluble groups}

\subsection{Baumslag-Solitar groups}
 We will first consider the Baumslag-Solitar group $G = BS(1, k) = \mathbb{Z}\left[\frac{1}{k}\right] \rtimes_k \langle t \rangle $ where the semidirect product is defined by  $tnt\i = kn$. In fact, \cref{main_thm_bs1_k} follows  from  \citep[Theorem 1.3(3)]{bs_alain_quotient}; For completeness, we provide a self-contained proof here.
We begin with the following lemma about modular arithmetic.
\begin{lem} \label{modular_arith}
Consider $\Z_m$ as a ring. The following statements about an element $a \in \Z_m$ are equivalent:

\begin{enumerate} [(i)]
\item  $a$ is a unit (i.e, $a$ has a multiplicative inverse);
\item $\gcd(a,m) = 1$;
\item $a$ has finite multiplicative order.
\end{enumerate}
\end{lem}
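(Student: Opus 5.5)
We prove the cycle of implications (i) $\Rightarrow$ (ii) $\Rightarrow$ (iii) $\Rightarrow$ (i) directly from the definitions. Throughout, we view $\Z_m$ as the ring $\Z/m\Z$ and represent $a$ by an integer, also denoted $a$; none of the conditions depends on the choice of representative, since $\gcd(a,m)$ depends only on the residue class.

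For (i) $\Rightarrow$ (ii), suppose $ab \equiv 1 \pmod m$ for some integer $b$. Then $ab + km = 1$ for some $k \in \Z$, so any common divisor of $a$ and $m$ divides $1$, giving $\gcd(a,m)=1$. For (ii) $\Rightarrow$ (i), Bézout's identity gives $x,y \in \Z$ with $xa + ym = 1$. Reducing modulo $m$ gives $xa \equiv 1$, so $x$ is an inverse of $a$.

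For (i) $\Rightarrow$ (iii), the units of $\Z_m$ form a finite group $\Z_m^\times$. Hence $a$ has finite order in that group; explicitly, $a^{|\Z_m^\times|} = 1$ by Lagrange's theorem. (Alternatively, the powers $a, a^2, \dots$ lie in the finite set $\Z_m$, so $a^i = a^j$ for some $i<j$; multiplying by $a^{-i}$ gives $a^{j-i}=1$.) For (iii) $\Rightarrow$ (i), if $a^r = 1$ for some $r \ge 1$, then $a \cdot a^{r-1} = 1$, so $a^{r-1}$ is an inverse of $a$.

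The only point needing care is the meaning of ``finite multiplicative order'' in (iii). It must mean that $a^r = 1$ for some $r \geq 1$, not merely that the sequence of powers of $a$ is eventually periodic, which holds for every element of a finite ring. With that reading fixed, none of the steps presents a real obstacle.
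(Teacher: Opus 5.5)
Your proof is correct and takes essentially the same route as the paper: Bézout's identity for (i) $\Leftrightarrow$ (ii), finiteness of the unit group for (i) $\Rightarrow$ (iii), and $a \cdot a^{r-1} = 1$ for (iii) $\Rightarrow$ (i), a step the paper simply calls clear. One small point: you announce a cycle (i) $\Rightarrow$ (ii) $\Rightarrow$ (iii) $\Rightarrow$ (i), but what you actually prove are the two equivalences (i) $\Leftrightarrow$ (ii) and (i) $\Leftrightarrow$ (iii), which is equally sufficient.
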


\begin{proof}

We will first prove the implication (i) $\Rightarrow$ (ii). Suppose there exists $c \in \Z$ such that $ac \equiv_m 1$, i.e. $ac-1 = dm$ for some $d \in \Z$. Then $gcd(a,m) | ac-dm = 1$. Therefore $\gcd(a,m)=1$. Next, we will prove the implication (ii) $\Rightarrow$ (i).  Suppose $\gcd(a,m) = 1$, Then by Bézout's identity, there exists $c$,$d \in \Z$ such that $ac+dm = 1$. Therefore $ac \equiv_m 1$, i.e. $c$ is the multiplicative inverse of $a$ in $\Z_m$. The implication (iii) $\Rightarrow (i)$  is clear. Finally, the implication (i) $\Rightarrow$ (iii) follows from the fact that the set of units of $\Z_m$ forms a finite group under multiplication.
\end{proof}

Next, we will describe some normal subgroups of $G = BS(1, k) = \mathbb{Z}\left[\frac{1}{k}\right] \rtimes_k \langle t \rangle$.
\begin{lem}
Let $k \geq 2$ and $m$ be an integer coprime to $k$. Let $n=\ord_m(k)$. Define $H_{m,n} \coloneqq  m\mathbb{Z}\left[\frac{1}{k}\right] \rtimes_k  n \Z  = \{ \left(\frac{am}{k^c}, t^{bn}\right)  \mid    \frac{a}{k^c} \in \mathbb{Z}\left[\frac{1}{k}\right] , b \in  \Z        \}$, this is a normal subgroup of $G$ with index $mn$. 
\end{lem}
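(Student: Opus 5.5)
We need to check three things about $H_{m,n}$: it is a subgroup, it is normal in $G$, and it has index $mn$. The natural route is to exhibit $H_{m,n}$ as the kernel of a surjective homomorphism $G \to \mathbb{Z}_m \rtimes \mathbb{Z}_n$, which gives all three at once.

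\textbf{Step 1: the quotient map on $\mathbb{Z}[\frac{1}{k}]$.} Since $\gcd(k,m)=1$, \cref{modular_arith} says $k$ is a unit in $\mathbb{Z}_m$, of finite multiplicative order $n=\ord_m(k)$. Define
\[
\rho:\mathbb{Z}[\tfrac1k]\to\mathbb{Z}_m,\qquad \rho\!\left(\tfrac{a}{k^c}\right)=a\,(k^{-1})^c \bmod m .
\]
This is well defined: if $\frac{a}{k^c}=\frac{a'}{k^{c'}}$, then $ak^{c'}=a'k^c$, and both expressions agree after multiplying by the invertible element $k^{c+c'}$. It is additive and surjective, since it restricts to the usual reduction on $\mathbb{Z}$.

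The kernel is $m\mathbb{Z}[\frac1k]$. If $\rho(\frac{a}{k^c})=0$, then $m\mid a$ because $k$ is invertible mod $m$. So $\frac{a}{k^c}=m\cdot\frac{a/m}{k^c}$, and the reverse inclusion is clear. Hence $[\mathbb{Z}[\frac1k]:m\mathbb{Z}[\frac1k]]=m$. This is the same Bézout-type argument as \cref{index}.

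\textbf{Step 2: extend to $G$.} Define
\[
\Phi:G\to \mathbb{Z}_m\rtimes\mathbb{Z}_n,\qquad \Phi(x,t^b)=(\rho(x),\,b \bmod n),
\]
where $1\in\mathbb{Z}_n$ acts on $\mathbb{Z}_m$ by multiplication by $k$. This action is well defined because $k^n\equiv 1 \pmod m$. To see that $\Phi$ is a homomorphism, use the multiplication rule $(x_1,t^{b_1})(x_2,t^{b_2})=(x_1+k^{b_1}x_2,\,t^{b_1+b_2})$ together with $\rho(k^{b}x)=k^{b}\rho(x)$; the latter holds for all $b\in\mathbb{Z}$, including negative $b$, via $k^{-1}$ in $\mathbb{Z}_m$.

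$\Phi$ is surjective because each coordinate map is. Its kernel is exactly
\[
\{(x,t^{b}) : x\in m\mathbb{Z}[\tfrac1k],\ n\mid b\}=H_{m,n}.
\]
Therefore $H_{m,n}\trianglelefteq G$ and $[G:H_{m,n}]=|\mathbb{Z}_m\rtimes\mathbb{Z}_n|=mn$.

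The only delicate point is the well-definedness of $\rho$ and of the twisted action; both reduce to $k$ being a unit of order $n$ in $\mathbb{Z}_m$, which \cref{modular_arith} supplies. A direct alternative would be to check closure and conjugation invariance of $H_{m,n}$ by hand, using $t\cdot(mx)=m(kx)$ and $t^{n}$ acting trivially modulo $m$. The homomorphism route is cleaner and yields the index for free.
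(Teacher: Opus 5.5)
Your proof is correct, but it takes a different route from the paper. The paper verifies only normality, by a direct computation. It conjugates a general element $\bigl(\frac{am}{k^c},t^{bn}\bigr)$ of $H_{m,n}$ by $\bigl(\frac{p}{k^r},t^q\bigr)$ and obtains $\bigl((1-k^{bn})\frac{p}{k^r}+\frac{am}{k^{c-q}},\,t^{bn}\bigr)$. It then observes that $m \mid 1-k^{bn}$ because $n=\ord_m(k)$. This is the ``direct alternative'' you mention at the end.

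In the paper, the subgroup property and the index $mn$ are left implicit. The index needs $[\mathbb{Z}[\frac1k]:m\mathbb{Z}[\frac1k]]=m$, and this is exactly where coprimality matters: for $m=k$ one has $k\mathbb{Z}[\frac1k]=\mathbb{Z}[\frac1k]$, so that index would be $1$.

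Your approach realises $H_{m,n}$ as the kernel of the explicit surjection $\Phi\colon G\to\mathbb{Z}_m\rtimes_{\bar k}\mathbb{Z}_n$. It costs a bit more setup, since you must check that $\rho$ and the twisted action are well defined. In return it gives subgroup, normality and index in one stroke. It also proves the identification $G/H_{m,n}\cong\mathbb{Z}_m\rtimes_{\bar k}\mathbb{Z}_n$, which the paper uses without justification in the proof of \cref{main_thm_bs1_k}. The paper's computation is shorter if only normality is wanted.
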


\begin{proof}

According to \cref{modular_arith}, $n$ is finite and $H_{m,n}$ is well-defined. Let $\left(\frac{am}{k^c}, t^{bn}\right) \in H_{m,n}$ and $\left(\frac{p}{k^r},t^q\right) \in G$, then  we have
\[\left(\frac{p}{k^r}, t^q\right) \left(\frac{am}{k^c}, t^{bn}\right)  \left(\frac{p}{k^r}, t^q\right) \i = \left(\frac{p}{k^r}  + \frac{am k^q}{k^c} , t^{bn}\right)  - \frac{p}{k^r}  =  \left( (1 - k^{bn}) \frac{p}{k^r} + \frac{am }{k^{c-q}} , t^{bn}\right).   \]
Since $n=\ord_m(k)$, we have $1 - k^{bn}$ being a multiple of $m$. Therefore $H_{m,n}$ is indeed closed under conjugation.

\end{proof}

\begin{proof} [Proof of \cref{main_thm_bs1_k}]
Now, for $n \in \N$,  we let $m= k^n -1$. Let $Q_n = Q_{m,n}= \frac{G}{H_{m,n}} \iso \Z_m \rtimes_{\bar{k}} \Z_n$, and take the generating set $S = S_{m,n} =\{ (\pm1,0),(0, t^{\pm1})  \}$. Let $\bar{S}$ be the image of $S$ in $Q_n$. Then $\bar{S}^n \supseteq \{ (0, i) \mid 0 \leq i \leq n-1 \}$. Hence,$ \bar{S}^{2n+1} \supseteq \{ (k^i, 0)= (0, i) (1, 0 ) (0, -i ) \mid  0 \leq i \leq n-1 \}$, i.e. $\bar{S}^{2n+1}$ covers all the powers of $k$ in the first component.  Recall that the number of digits required to represent an integer $j$ in base $k$ is $\lfloor\log_k j \rfloor + 1$.  It follows that for any $j \in \{0,1,\ldots, m-1 \}$, we can express it as a sum of at most $\lfloor\log_k j \rfloor +1 \leq  \lfloor\log_k m \rfloor +1 = \log_k (m+1)=n $ elements in $\{ k^r \mid  0 \leq r \leq n-1 \}$. Hence, $ \bar{S}^{(2n+1) n} \supseteq \{ (j,0)\mid 0 \leq j \leq m-1 \}$ . Therefore, $\bar{S}^{(2n+1) n+n} \supseteq \{ (j,i)\mid  0 \leq j \leq m-1 , 0 \leq i \leq n-1 \} = Q_n$, and so $\diam_{\bar{S}}(Q_n) \leq (2n+1) n+n = (2n+1) n+n$.  Let $\a \in (0,1]$ and $\eps>0$,  for big enough $n$,  we have 
$$diam_{\bar{S}}(Q_n) \leq (2n+1) n+n \leq 3n^2 < \eps(n (k^n-1))^\a = \eps|Q_n|^\a. $$
It follows that the group $G$ does not have uniformly almost flat quotients.
\begin{rem}
Using a similar construction and logic, the author believes one can show that the group $B_{p,q} = \mathbb{Z}\left[\frac{1}{pq}\right] \rtimes_{p/q} \langle t \rangle$ does not have uniformly almost flat quotients when $p$ and $q$ are distinct primes. The group $B_{p,q}$ appears as the largest residually finite quotient of the Baumslag-Solitar group $BS(p,q) = \langle a, t \mid t a^p t^{-1} = a^q \rangle$.  To validate the failure of the u.c.. property, one may consider the family of finite-index normal subgroups defined by:
\[
H_{m,n} \coloneqq m\mathbb{Z}\left[\frac{1}{pq}\right] \rtimes \langle t^n \rangle,
\]
where we set $n = |p^m - q^m|$. 
\end{rem}

\end{proof}
\subsection{Wreath products}

In this section, we show that the wreath product of a finitely generated abelian group with $\mathbb{Z}$ does not have uniformly almost flat quotients. We begin with a special case:
\begin{prop} \label{prop:wreath_not_uq}
The wreath product $G = \mathbb{Z}_p \wr \mathbb{Z}$ does not have the u.q. property for any prime $p$.
\end{prop}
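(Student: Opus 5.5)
We construct explicit finite quotients of $G=\mathbb{Z}_p\wr\mathbb{Z}=\mathbb{F}_p[x,x^{-1}]\rtimes\langle t\rangle$, where $t$ acts by multiplication by $x$, whose diameters grow only polynomially in $n$ while their orders grow exponentially in $n$. This mirrors the proof of \cref{main_thm_bs1_k}. For $n\in\mathbb{N}$, let $N_n\trianglelefteq G$ be the subgroup $(x^n-1)\mathbb{F}_p[x,x^{-1}]\rtimes\langle t^n\rangle$. It is a subgroup because $x^n$ acts trivially modulo $x^n-1$. It is normal because conjugating $(f,t^{bn})$ by $(g,t^q)$ gives $\bigl(x^qf+(1-x^{bn})g,\,t^{bn}\bigr)$, and $1-x^{bn}$ is divisible by $x^n-1$. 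The quotient is
\[
Q_n=G/N_n\cong \mathbb{F}_p[x]/(x^n-1)\rtimes \mathbb{Z}_n\cong \mathbb{Z}_p\wr \mathbb{Z}_n,
\]
the lamplighter group over a cycle of length $n$, so $|Q_n|=p^n n$.

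Next we bound the diameter of $Q_n$ with respect to the image $\bar S$ of $S=\{(\pm 1,0),(0,t^{\pm1})\}$. Any element of $Q_n$ has the form $(\sum_{i=0}^{n-1}c_ix^i,\,t^j)$ with $0\le c_i<p$. We write it as a walk: for $i=0,\dots,n-1$, multiply by $(1,0)$ exactly $c_i$ times, then by $t$. This produces $(\sum c_ix^i, t^n)=(\sum c_i x^i, 1)$, and at most $n$ further steps correct the $t$-coordinate. Hence $\diam_{\bar S}(Q_n)\le n(p-1)+2n\le (p+1)n$. This is even simpler than \cref{diam_in_semi}: the diameter is linear in $n$ (it is classically $\Theta(n)$).

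We then compare. Fix $\alpha\in(0,1]$ and $\varepsilon>0$. Then $\varepsilon|G:N_n|^\alpha=\varepsilon(np^n)^\alpha$ grows exponentially in $n$, whereas $\diam_{\bar S}(G/N_n)\le (p+1)n$. So for large $n$ the inequality defining u.q.$(\alpha)$ fails, and by \cref{lem:indep.gen.set} the choice of $S$ does not matter. Since $\alpha$ and $\varepsilon$ were arbitrary, $G$ does not have the u.q. property.

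I expect no serious obstacle. The only points needing care are checking that $N_n$ is normal and correctly identifying the index (using that $x^n-1$ generates an ideal of codimension $n$ in $\mathbb{F}_p[x,x^{-1}]$, since $x$ is a unit modulo it). The general case of \cref{main_thm__fg_abelian_wreath_not_uq} should then follow: $A$ surjects onto $\mathbb{Z}_p$ for some prime $p$, so $A\wr\mathbb{Z}$ surjects onto $\mathbb{Z}_p\wr\mathbb{Z}$, and the u.q. property passes to quotients.
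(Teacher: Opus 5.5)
Your proof is correct and follows the paper's: your $N_n$ is exactly the kernel of the paper's $n$-periodisation map onto $\mathbb{Z}_p\wr\mathbb{Z}_n$, and you conclude by the same comparison of polynomially bounded diameter against the exponential order $p^n n$. The only difference is the diameter estimate. Your lamplighter walk gives a linear bound $(p+1)n$, whereas the paper sums conjugates $\sigma^{(i)}\in S_n^{2i+1}$ to get $2pn^2$; either suffices.
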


\begin{proof}
Let $G = \mathbb{Z}_p$ and $S = \{ (\delta_0, 0), (0, 1) \}$ be the standard generating set, where $\delta_0$ is the generator of the $0$-th copy of $\mathbb{Z}_p$. For each $n \in \mathbb{N}$, consider the $n$-periodisation map $\pi_n \colon G \to \mathbb{Z}_p \wr \mathbb{Z}_n$ defined by:
\[ \pi_n(\underline{x}, z) = (\underline{x}^{[n]}, \overline{z}_n), \]
where $\overline{z}_n \in \mathbb{Z}_n$ is the residue of $z$ modulo $n$, and the vector $\underline{x}^{[n]} =( (x^{[n]})_0, \dots, (x^{[n]})_{n-1}) \in (\mathbb{Z}_p)^n$ is given by $(x^{[n]})_i = \sum_{j \equiv i \pmod{n}} x_j$. 

Let $Q_n = \operatorname{Im}(\pi_n) \cong (\mathbb{Z}_p)^n \rtimes \mathbb{Z}_n$. The order of these finite quotients is $|Q_n| = p^n  n$. Let $S_n = \pi_n(S) = \{ (\sigma, \overline{0}_n), (\underline{0}, \overline{1}_n) \}$, where $\sigma = (1, 0, \dots, 0)$. Any element $(\underline{a}, \overline{k}_n) \in Q_n$ can be written as:
\[ (\underline{a}, \overline{k}_n) = \left( \sum_{i=0}^{n-1} a_i \sigma^{(i)}, \overline{0}_n \right) (\underline{0}, \overline{k}_n), \]
where $a_i \in \{0, \dots, p-1\}$ and $\sigma^{(i)}$ denotes the cyclic shift of $\sigma$ by $i$ positions. Since $(\sigma^{(i)}, \overline{0}_n) = (\underline{0}, \overline{1}_n)^i (\sigma, \overline{0}_n) (\underline{0}, \overline{1}_n)^{-i}$, we have:
\[ (\sigma^{(i)}, \overline{0}_n) \in S_n^{2i+1} \subseteq S_n^{2n-1}. \]
Summing these elements to form the vector $\underline{a}$ and then multiplying by the shift $(\underline{0}, \overline{k}_n)$, we find:
\[ (\underline{a}, \overline{k}_n) \in S_n^{(2n-1)np + n} \subseteq S_n^{2pn^2}. \]
This implies that $\operatorname{diam}_{S_n}(Q_n) \le 2n^2p$. Since we also have $|Q_n| = p^n  n$, it follows immediately that $G$ does not have the u.q. property.
\end{proof}

\begin{proof}[Proof of \cref{main_thm__fg_abelian_wreath_not_uq}]
Since $A$ is non-trivial, it admits a surjective homomorphism onto $\mathbb{Z}_p$ for some prime $p$. This induces a surjective homomorphism from $G$ onto $\mathbb{Z}_p \wr \mathbb{Z}$. Since the u.q. property is inherited by the quotient, and $\mathbb{Z}_p \wr \mathbb{Z}$ fails the property by \cref{prop:wreath_not_uq}, $G$ cannot have the u.q. property.
\end{proof}

In fact, the abelian-by-cyclic group $K \rtimes \mathbb{Z}$ does not have the u.c.\ property whenever $K$ has infinite subgroup rank. A proof can be found in \citep[Chapter~4.2]{Guo2025}.

\begin{rem}
It is worth noting that if $A$ contains $\mathbb{Z}$ in \cref{main_thm__fg_abelian_wreath_not_uq}, there are several other ways to see that $G=A \wr \mathbb{Z}$ does not have the u.q.\ property; we mention two here to give an idea. For simplicity, take $A = \mathbb{Z}$. Then we can write $G \cong K \rtimes \mathbb{Z}$, where $K = \bigoplus_{i \in \mathbb{Z}} \mathbb{Z}$ is the base group.
\begin{enumerate}
    \item The group $G$ admits $BS(1,k)=\mathbb{Z}[1/k]\rtimes_k \mathbb{Z}$ as a quotient. Consequently, $G$ does not have uniformly almost flat coset spaces by \cref{main_thm_bs1_k}.
    \item By quotienting out appropriate subgroups of $A$, one can show that $G$ has nilpotent quotients of arbitrarily large Hirsch length. It follows that $G$ does not have uniformly almost flat coset spaces by \citep[Proposition~1.8]{Guo_Tointon}.
\end{enumerate}
\end{rem}

\footnotesize{
\bibliographystyle{abbrv}
\bibliography{Biblio}
}

\end{document}